\theoremstyle{plain}
\newtheorem{theorem}{Theorem}[section]
\newtheorem{lemma}[theorem]{Lemma}
\newtheorem{proposition}[theorem]{Proposition}
\newtheorem{corollary}[theorem]{Corollary}
\newtheorem{definition}[theorem]{Definition}
\newtheorem{remark}[theorem]{Remark}
\theoremstyle{definition}
\newtheorem{romanremark}[theorem]{Remark}
\newtheorem{romanremarks}[theorem]{Remarks}
\newtheorem*{nonumremark}{Remark}
\newtheorem*{nonumremarks}{Remarks}
\newcommand\id{{\mathop{\rm id}}}
\newcommand\op{\mathop{\rm op}}
\newcommand\dd{{\mathop{\rm d}}}
\newcommand\hh{{\mathop{\rm h}}}
\newcommand{\sh}{{\mathop{\sigma\rm h}}}
\newcommand\mm{\mathop{\rm m}}
\newcommand\cb{\mathop{\rm cb}}
\newcommand\nph{\varphi}
\let\phi\nph
\let\epsilon\varepsilon
\newcommand\dist{\mathop{\rm dist}}
\newcommand{\As}[1][\A]{{#1}_1,\dots,{#1}_n}
\newcommand{\Hs}[1][H]{\As[#1]}
\newcommand{\opis}{\pi_1\otimess\pi_n}
\newcommand{\ff}{\mathit{f\!f}}
\newcommand{\HS}{\mathit{HS}}
\newcommand{\defeq}{\stackrel{\mathop{\rm def}}=}
\newcommand{\bb}[1]{\mathbb{#1}}
\newcommand{\bN}{\mathbb{N}}
\newcommand{\bC}{\mathbb{C}}
\newcommand{\CC}{\mathit{CC}}
\newcommand{\CB}{\mathit{CB}}
\newcommand{\CBS}{\CB^\sigma}
\newcommand{\haag}{\otimes_\hh}
\newcommand{\shaag}{\otimes_{\sh}}
\newcommand{\shaags}{\shaag\cdots\shaag}
\newcommand{\eh}{{\mathop{\rm eh}}}
\newcommand{\wh}{{\mathop{\rm w\mkern-1mu ^*\mkern-2mu h}}}
\newcommand{\ehaag}{\otimes_\eh}
\newcommand{\haags}{\haag\cdots\haag}
\newcommand{\ehaags}{\ehaag\cdots\ehaag}
\newcommand{\timess}{\times\cdots\times}
\newcommand{\otimess}{\otimes\cdots\otimes}
\newcommand{\odots}{\odot\cdots\odot}
\newcommand{\A}{\mathcal{A}}
\newcommand{\B}{\mathcal{B}}
\newcommand{\C}{\mathcal{C}}
\newcommand{\E}{\mathcal{E}}
\newcommand{\F}{\mathcal{F}}
\newcommand{\G}{\mathcal{G}}
\newcommand{\K}{\mathcal{K}}
\newcommand{\R}{\mathcal{R}}
\newcommand{\X}{\mathcal{X}}
\newcommand{\Y}{\mathcal{Y}}
\newcommand{\qand}{\quad\text{and}\quad}
\newcommand{\qtext}[1]{\quad\text{#1}}
\newcommand{\Kh}{\K_\hh}
\newcommand{\Bh}{\B_\hh}
\begin{document}

\title{Compactness properties of operator multipliers}

\author{K. Juschenko, R. H. Levene,\\ I. G. Todorov and L. Turowska}
\date{September 12, 2008}

\maketitle

\begin{abstract}
We continue the study of multidimensional operator multipliers
initiated in~\cite{jtt}. We introduce the notion of the symbol of
an operator multiplier. We characterise completely compact
operator multipliers in terms of their symbol as well as in terms
of approximation by finite rank multipliers. We give sufficient
conditions for the sets of compact and completely compact
multipliers to coincide and characterise the cases where an
operator multiplier in the minimal tensor product of two
C*-algebras is automatically compact. We give a description of
multilinear modular completely compact completely bounded maps
defined on the direct product of finitely many copies of the
C*-algebra of compact operators in terms of tensor products,
generalising results of Saar~\cite{saar}. \footnotetext{{\it
Primary:} 46L06, {\it Secondary:} 46L07, 47L25} \footnotetext{{\it
Keywords:} operator multiplier, complete compactness, Schur multiplier, Haagerup tensor product}
\end{abstract}

\section{Introduction}

A bounded function $\nph :
\mathbb{N}\times\mathbb{N}\rightarrow\mathbb{C}$ is called a Schur
multiplier if $(\nph(i,j)a_{ij})$ is the matrix of a bounded
linear operator on $\ell_2$ whenever $(a_{ij})$ is such. The study
of Schur multipliers was initiated by Schur in the early 20th
century and since then has attracted considerable attention, much
of which was inspired by A.~Grothendieck's characterisation of
these objects in his {\it
R$\acute{e}$sum$\acute{e}$}~\cite{grothendieck}. Grothendieck
showed that a function $\nph$ is a Schur multiplier precisely when
it has the form $\nph(i,j) = \sum_{k=1}^{\infty}a_k(i)b_k(j)$,
where $a_k,b_k : \bb{N}\rightarrow\bb{C}$ satisfy the conditions
$\sup_i \sum_{k=1}^{\infty} |a_k(i)|^2 < \infty$ and $\sup_j
\sum_{k=1}^{\infty} |b_k(j)|^2 < \infty$. In modern terminology,
this characterisation can be expressed by saying that $\nph$ is a
Schur multiplier precisely when it belongs to the extended
Haagerup tensor product $\ell_{\infty}\ehaag\ell_{\infty}$ of two
copies of $\ell_{\infty}$.

Special classes of Schur multipliers, e.g.~Toeplitz and Hankel Schur
multipliers, have played an important role in analysis and have been
studied extensively (see~\cite{Pi}). Compact Schur multipliers, that
is, the functions $\nph$ for which the mapping $(a_{ij})\rightarrow
(\nph(i,j)a_{ij})$ on $\B(\ell_2)$ is compact, were characterised
by Hladnik~\cite{hladnik}, who identified them with the elements of
the Haagerup tensor product $c_0\haag c_0$.

A non-commutative version of Schur multipliers was introduced by
Kissin and Shulman~\cite{ks} as follows. Let $\A$ and $\B$ be
$C^*$-algebras and let $\pi$ and $\rho$ be representations of $\A$
and $\B$ on Hilbert spaces $H$ and $K$, respectively. Identifying $H\otimes K$
with the Hilbert space $\C_2(H^{\dd},K)$ of all Hilbert-Schmidt
operators from the dual space $H^{\dd}$ of $H$ into $K$, we obtain
a representation $\sigma_{\pi,\rho}$ of the minimal tensor product
$\mathcal{A}\otimes \mathcal{B}$ acting on $\C_2(H^{\dd},K)$. An
element $\varphi\in\mathcal{A}\otimes\mathcal{B}$ is called a
$\pi$,$\rho$-multiplier if $\sigma_{\pi,\rho}(\varphi)$ is bounded
in the operator norm of $\C_2(H^{\dd},K)$. If $\varphi$ is a
$\pi$,$\rho$-multiplier for any pair of representations
$(\pi,\rho)$ then $\varphi$ is called a universal (operator)
multiplier.

Multidimensional Schur multipliers and their non-commutative versions
were introduced and studied in~\cite{jtt}, where the authors gave, in
particular, a characterisation of universal multipliers as certain
weak limits of elements of the algebraic tensor product of the
corresponding $C^*$-algebras, generalising the corresponding results
of Grothendieck and Peller~\cite{grothendieck,peller} as previously
conjectured by Kissin and Shulman in~\cite{ks}. Let $\As$ be
C*-algebras.  Like Schur multipliers, elements of the set $M(\As)$ of
(multidimensional) universal multipliers give rise to completely
bounded (multilinear) maps.  Requiring these maps to be compact or
completely compact, we define the sets of compact and completely
compact operator multipliers denoted by $M_{c}(\As)$ and
$M_{cc}(\As)$, respectively. The notion of complete compactness we use
is an operator space version of compactness which was introduced by
Saar~\cite{saar} and subsequently studied by Oikhberg~\cite{oik} and
Webster~\cite{webster}. Our results on operator multipliers rely on
the main result of Section~\ref{completelyc} where we prove a
representation theorem for completely compact completely bounded
multilinear maps. In~\cite{christensen_sinclair} Christensen and
Sinclair established a representation result for completely bounded
multilinear maps which implies that every such map $\Phi :
\K(H_2,H_1)\haags\K(H_n,H_{n-1})\to\K(H_n,H_1)$ 
(where, for Hilbert spaces $H'$ and $H''$, we denote by
$\K(H',H'')$ the space of all compact operators from $H'$ into
$H''$) has the form
\begin{equation}\label{formo}
\Phi(x_1\otimess x_{n-1})=A_1(x_1\otimes
1)A_2\ldots(x_{n-1}\otimes 1)A_{n},
\end{equation}
for some index set $J$ and bounded block operator matrices $A_1\in M_{1,J}(\B(H_1))$, $A_2\in M_J(\B(H_2)), \ldots,
A_n\in M_{J,1}(\B(H_n))$. In other words, $\Phi$ arises from an element
$$u=A_1\odots A_n\in \B(H_1)\ehaags\B(H_n)$$
of the extended Haagerup tensor product of
$\B(H_1),\dots,\B(H_n)$. Moreover, if $\Phi$ is
$\A_1',\ldots\A_n'$-modular for some von Neumann algebras 
$\A_1,\dots,\A_n$, then the entries of $A_i$ can be chosen from
$\A_i$. We show in Section~\ref{completelyc} that a map $\Phi$
as above is completely compact precisely when it has a
representation of the form (\ref{formo}) where
$$u=A_1\odots A_n\in \K(H_1)\haag(\B(H_2)\ehaags\B(H_{n-1}))\haag\K(H_n).$$
This extends a result of Saar~\cite{saar} in the two dimensional
case. If, additionally, $\As$ are von Neumann algebras and $\Phi$
is $\A_1',\ldots\A_n'$-modular then $u$ can be chosen from
$\K(\A_1)\otimes_{\hh}(\A_2\ehaags\A_{n-1})\otimes_{\hh}\K(\A_n)$,
where $\K(\A)$ denotes the ideal of compact elements of a
$C^*$-algebra $\A$. As a consequence of this and a result of
Effros and Kishimoto~\cite{effros_kishimoto} we point out the
completely isometric identifications
\[\CC(\K(H_2,H_1))^{**}\simeq(\K(H_1)\haag\K(H_2))^{**}\simeq
\CB(\B(H_2,H_1)),\] where $\CC(\X)$ and $\CB(\X)$ are the spaces
of completely compact and completely bounded maps on an operator
space ${\X}$, respectively.

In Section~\ref{sec:cbmult} we pinpoint the connection between
universal operator multipliers and completely bounded maps. This
technical result is used in Section~\ref{ssy} to define the symbol
$u_{\varphi}$ of an operator multiplier $\varphi\in M(\As)$ which, in
the case $n$ is even (resp.~odd) is an element of
$\A_n\ehaag\A_{n-1}^o\ehaags\A_1^o$
(resp.~$\A_n\ehaag\A_{n-1}^o\ehaags\A_1$). Here $\A^o$ is the
opposite C*-algebra of $\A$. This notion extends a similar notion that
was given in the case of completely bounded masa-bimodule maps by
Katavolos and Paulsen in~\cite{kp}. We give a symbolic calculus for
universal multipliers which is used to establish a universal property
of the symbol related to the representation theory of the C*-algebras
under consideration.

The symbol of a universal multiplier is used in Section~\ref{schar} to
single out the completely compact multipliers within the set of
all operator multipliers. In fact, we show that $\varphi\in
M_{cc}(\As)$ if and only if
\[u_{\phi}\in
\begin{cases}
\K(\A_n) \haag(\A_{n-1}^o\ehaags\A_2)\haag \K(\A_1^o)\;&\text{if $n$ is even}\\
\K(\A_n) \haag(\A_{n-1}^o\ehaags\A_2^o)\haag \K(\A_1)&\text{if $n$
is odd,}
\end{cases}\] which is equivalent to the approximability of $\phi$ in
the multiplier norm by operator multipliers of finite rank whose range
consists of finite rank operators. It follows that a multidimensional
Schur multiplier $\nph\in \ell_{\infty}(X_1\times\dots\times X_n)$ is
compact if and only if 
$\nph\in c_0(X_1)\haag(\ell_{\infty}(X_2)\ehaags\ell_{\infty}(X_{n-1}))
\haag c_0(X_n)$.

In Section~\ref{sec:cccm} we use Saar's construction~\cite{saar}
of a completely bounded compact mapping which is not completely
compact to show that %
the
inclusion $M_{cc}(\As)\subseteq M_{c}(\As)$ is proper
if both $\K(\A_1)$ and $\K(\A_n)$ contain
full matrix algebras of arbitrarily large sizes. However, if
both $\K(\A_1)$ and $\K(\A_n)$ are isomorphic to a $c_0$-sum of
matrix algebras of uniformly bounded sizes then the sets of
compact and completely compact multipliers coincide. The case when
only one of $\K(\A_1)$ and $\K(\A_n)$ contains matrix algebras of
arbitrary large size remains, however, unsettled. Finally, for $n
= 2$, we characterise the cases where every universal multiplier
is automatically compact: this happens precisely when one of the
algebras $\A_1$ and $\A_2$ is finite dimensional and the other one
coincides with its algebra of compact elements.

\bigskip

\noindent {\bf Acknowledgements} We are grateful to V.S. Shulman
for stimulating results, questions and discussions. We would like
to thank M. Neufang for pointing out to us
Corollary~\ref{identification} and R. Smith for a discussion
 concerning Remark \ref{rsmith}. The first named
author is grateful to Gilles Pisier for the support of the one
semester visit to the University of Paris 6 and the warm
atmosphere at the department, where one of the last drafts of the
paper was finished.

The first named author was supported by The Royal Swedish Academy
of Sciences, Knut och Alice Wallenbergs stiftelse and
Jubileumsfonden of the University of Gothenburg's Research
Foundation. The second and the third named authors were supported
by Engineering and Physical Sciences Research Council grant
EP/D050677/1. The last named author was supported by the Swedish
Research Council.

\section{Preliminaries}\label{sec:prelims}

We start by recalling standard notation and notions from operator
space theory. We refer the reader
to~\cite{blm},~\cite{effros_ruan1},~\cite{paulsen-book}
and~\cite{pisier-book} for more details.

If $H$ and $K$ are Hilbert spaces we let $\B(H,K)$
(resp.~$\K(H,K)$) denote the set of all bounded linear
(resp.~compact) operators from $H$ into $K$. If $I$ is a set we
let $H^I$ be the direct sum of $|I|$ copies of $H$ and set
$H^{\infty} = H^{\bb{N}}$. An operator space $\E$ is a closed
subspace of $\B(H,K)$, for some Hilbert spaces $H$ and $K$. The
opposite operator space $\E^o$ associated with $\E$ is the space
$\E^o = \{x^{\dd} : x\in\E\}\subseteq\B(K^{\dd},H^{\dd})$. Here,
and in the sequel, $H^{\dd}=\{\xi^\dd:\xi\in H\}$ denotes the dual
of the Hilbert space $H$, where $\xi^\dd(\eta)=(\eta,\xi)$ for
$\eta\in H$. Note that $H^{\dd}$ is canonically conjugate-linearly
isometric to $H$. We also adopt the notation $x^\dd\in
\B(K^\dd,H^\dd)$ for the Banach space adjoint of $x\in\B(H,K)$, so
that $x^\dd \xi^\dd=(x^*\xi)^\dd$ for $\xi\in K$.  As usual,
$\E^*$ will denote the operator space dual of $\E$. If
$n,m\in\bb{N}$, by $M_{n,m}(\E)$ we denote the space of all $n$ by
$m$ matrices with entries in $\E$ and let $M_n(\E) = M_{n,n}(\E)$.
The space $M_{n,m}(\E)$ carries a natural norm arising from the
embedding $M_{n,m}(\E)\subseteq \B(H^m,K^n)$. Let $I$ and $J$ be
arbitrary index sets. If $v$ is a matrix with entries in $\E$
and indexed by $I\times J$, and $I_0\subseteq I$ and $J_0\subseteq
J$ are finite sets, we let $v_{I_0,J_0}\in M_{I_0,J_0}(\E)$ be the
matrix obtained by restricting $v$ to the indices from $I_0\times
J_0$. We define $M_{I,J}(\E)$ to be the space of all such $v$ for
which
\[\|v\|\defeq \sup\{\|v_{I_0,J_0}\| : \text{$I_0\subseteq I$,
$J_0\subseteq J$ finite}\} < \infty.\] Then $M_{I,J}(\E)$ is an
operator space~\cite[\S 10.1]{effros_ruan1}. Note that
$M_{I,J}(\B(H,K))$ can be naturally identified with $\B(H^J,K^I)$
and every $v\in M_{I,J}(\B(H,K))$ is the weak limit of
$\{v_{I_0,J_0}\}$ along the net $\{(I_0,J_0) : I_0\subseteq I,
J_0\subseteq J \mbox{ finite}\}$. We set $M_{I}(\E) =
M_{I,I}(\E)$. For $A=(a_{ij})\in M_I(\E)$, we write
$A^\dd=(a_{ij}^\dd)\in M_I(\E^o)$.

\subsection{Completely bounded maps and Haagerup tensor products}

If $\E$ and $\F$ are operator spaces, a linear map $\Phi:\E\to \F$
is called completely bounded if the maps $\Phi^{(k)}:M_k(\E)\to
M_k(\F)$ given by $\Phi^{(k)}((a_{ij}))=(\Phi(a_{ij}))$ are
bounded for every $k\in {\mathbb N}$ and $\|\Phi\|_{\cb}
\defeq \sup_k\|\Phi^{(k)}\| < \infty$.

Given linear spaces $\E_1,\dots,\E_n$, we denote by
$\E_1\odots\E_n$ their algebraic tensor product. If 
$\E_1,\dots,\E_n$ are operator spaces and $a^k=(a_{ij}^k)\in
M_{m_k,m_{k+1}}(\E_k)$, $m_k\in\mathbb{N}$, $k=1,\dots,n$, we
define the multiplicative product
$$a^1\odots a^n\in M_{m_1,m_{n+1}}(\E_1\odots\E_n)$$ by letting
its $(i,j)$-entry $(a^1\odots a^n)_{ij}$ be
$\sum_{i_2,\dots,i_n}a_{i,i_2}^1\otimes a_{i_2,i_3}^2\otimess
a^n_{i_n,j}.$ If $\E$ is another operator space and
$\Phi:\E_1\timess\E_n\to\E$ is a multilinear map we let
$$\Phi^{(m)}: M_m(\E_1)\times\dots\times M_m(\E_n)\to M_m(\E)$$
be the map given by
$$\big(\Phi^{(m)}(a^1,\dots,a^n)\big)_{ij} =
\sum_{i_2,\dots,i_n}\Phi(a_{i,i_2}^1,a_{i_2,i_3}^2,\dots,a_{i_n,j}^n),$$
where $a^k = (a^k_{s,t})\in M_{m}(\E_k)$, $k = 1,\dots,n$.
The multilinear map $\Phi$ is called completely bounded if there
exists a constant $C > 0$ such that, for all $m\in{\mathbb N}$,
$$\|\Phi^{(m)}(a^1,\dots,a^n)\|\leq C\|a^1\|\dots\|a^n\|, \ \   a^k\in M_{m}(\E_k), \ k = 1,\dots,n.$$
Set $\|\Phi\|_{\cb}\defeq\sup\{\|\Phi^{(m)}(a^1,\dots,a^n)\| :
m\in\bb{N}, \|a^1\|,\dots,\|a^n\|\leq 1\}$. It is well-known
(see~\cite{effros_ruan1,paulsen-smith}) that a completely bounded
multilinear map $\Phi$ gives rise to a completely bounded map on
the Haagerup tensor product ${\E_1}\haags\E_n$
(see~\cite{effros_ruan1} and~\cite{pisier-book} for its definition
the and basic properties).

The set of all completely bounded multilinear maps from
$\E_1\timess\E_n$ into $\E$ will be denoted by
$\CB(\E_1\timess\E_n,\E)$. If $\E_1,\dots,\E_n$ and $\E$ are dual
operator spaces we say that a map $\Phi\in
\CB(\E_1\timess\E_n,\E)$ is normal~\cite{christensen_sinclair} if
it is weak* continuous in each variable. We write
$\CBS(\E_1\timess\E_n,\E)$ for the space of all normal maps in
$\CB(\E_1\timess\E_n,\E)$.

The {\it extended Haagerup tensor product} $\E_1\ehaags\E_n$ is
defined~\cite{effros_ruan} as the space of all normal completely
bounded maps $u:\E_1^*\timess\E_n^*\to{\mathbb C}$. It was shown
in~\cite{effros_ruan} that if $u\in \E_1\ehaags\E_n$ then there
exist index sets $J_1,J_2,\dots,J_{n-1}$ and matrices $a^1 =
(a^1_{1,s})\in M_{1,J_1}(\E_1)$, $a^2 = (a^2_{s,t})\in
M_{J_1,J_2}(\E_2),\dots, a^n = (a^n_{t,1})\in M_{J_{n-1},1}(\E_n)$
such that if $f_i\in\E_i^*$, $i=1,\dots,n$, then
\begin{equation}\label{product}
\langle u,f_1\otimess
f_n\rangle\defeq u(f_1,\dots,f_n) = \langle
a^1,f_1\rangle\dots\langle a^n,f_n\rangle,
\end{equation}
where $\langle a^k,f_k\rangle=\big(f_k(a_{s,t}^k)\big)_{s,t}$ and
the product of the (possibly infinite) matrices in~\eqref{product}
is defined to be the limit of the sums
$$\sum_{i_1\in F_1,\dots, i_{n-1}\in F_{n-1}}f_1(
a_{1,i_1}^1)f_2(a^2_{i_1,i_2})\dots f_n(a_{i_{n-1},1}^n)$$ along
the net $\{(F_1\timess F_{n-1}) : F_j\subseteq J_j \mbox{ finite},
1\leq j\leq n-1\}$. We may thus identify $u$ with the matrix
product $a^1\odots a^n$; two elements $a^1\odots a^n$ and $\tilde
a^1\odots \tilde a^n$ coincide if $\langle
a^1,f_1\rangle\dots\langle a^n,f_n\rangle = \langle \tilde
a^1,f_1\rangle\dots\langle \tilde a^n,f_n\rangle$ for all
$f_i\in\E_i^*$. Moreover,
$$\|u\|_\eh = \inf\{\|a^1\|\dots\|a^n\| : u = a^1\odots
a^n\}.$$ The space $\E_1\ehaags\E_n$ has a natural operator space
structure~\cite{effros_ruan}. If $\E_1,\dots,\E_n$ are dual
operator spaces then by~\cite[Theorem~5.3]{effros_ruan}
$\E_1\ehaags\E_n$ coincides with the weak* Haagerup tensor product
$\E_1\otimes_{\wh}\dots\otimes_{\wh}\E_n$ of Blecher and
Smith~\cite{blecher_smith}. Given operator spaces $\F_i$ and
completely bounded maps $g_i:\E_i\to \F_i$, $i = 1,\dots,n$,
Effros and Ruan~\cite{effros_ruan} define a completely bounded map
\begin{align*}
  g=g_1\ehaags
  g_n:\E_1\ehaags\E_n&\to \F_1\ehaags \F_n,\\
  a^1\odots a^n&\mapsto
\langle a^1,g_1\rangle \odots \langle a^n,g_n\rangle
\end{align*}
where $\langle a^k,g_k\rangle=\big(g_k(a^k_{ij})\big)$. Thus
\begin{equation}
  \label{eq:eh-adjoints}
  \langle g(u),f_1\otimess f_n\rangle = \langle u, (f_1\circ g_1)\otimess
  (f_n\circ g_n)\rangle
\end{equation}
for $u\in \E_1\ehaags\E_n$ and $f_i\in \F_i^*$, $i =
1,\dots,n$.

The following fact is a straightforward consequence of a
well-known theorem due to Christensen and
Sinclair~\cite{christensen_sinclair}, and it will be used
throughout the exposition.

\begin{theorem}\label{cs}
Let $H_i$ be a Hilbert space and $\R_i\subseteq\B(H_i)$ be a
von Neumann algebra, $i = 1,\dots,n$. There exists an isometry
$\gamma$ from $\R_1\ehaags \R_n$ onto the space
of all $\R_1',\dots,\R_n'$-modular maps in
$CB^{\sigma}(\B(H_2,H_1)\timess \B(H_n,H_{n-1}),\B(H_n,H_1))$,
given as follows: if $u = A_1\odot \dots\odot A_n \in 
\R_1\ehaags \R_n$ then
$$\gamma(u)(T_1,\dots,T_{n-1}) = A_1 (T_1\otimes I)A_2\dots
A_{n-1}(T_{n-1}\otimes I)A_n,$$ for all $T_i\in \B(H_{i+1},H_i)$, $i = 1,\dots,n-1$.
\end{theorem}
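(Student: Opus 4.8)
The plan is to prove this theorem by invoking the Christensen–Sinclair representation theorem and carefully verifying the three claims it packages together: that the formula defines a map into the correct space, that $\gamma$ is an isometry, and that $\gamma$ is onto with the modularity characterisation pinning down exactly the right range.
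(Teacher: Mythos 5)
Your approach coincides with the paper's: the paper gives no proof of this theorem at all, presenting it only as ``a straightforward consequence of a well-known theorem due to Christensen and Sinclair,'' which is precisely the route you propose. The only caveat is that your proposal is an outline rather than a proof --- the three verifications you list (that the formula lands in the space of normal $\R_1',\dots,\R_n'$-modular completely bounded maps, that $\gamma$ is isometric, and that modularity pins down the range, the last of which is where the von Neumann algebra hypothesis and the double commutant enter) are exactly what needs checking, but none of them is actually carried out.
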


We now turn to the definition of slice maps which will play an
important role in our proofs. Given $\omega_1\in \B(H_1)^*$ we set $L_{\omega_1} =
\omega_1\otimes\id_{\B(H_2)}$.
After identifying $\bb{C}\otimes\B(H_2)$ with $\B(H_2)$ we obtain a mapping called a left slice
map $L_{\omega_1} : \B(H_1)\ehaag\B(H_2)\rightarrow \B(H_2)$. Similarly, for $\omega_2\in
\B(H_2)^*$ we obtain a right slice map $R_{\omega_2}:\B(H_1)\ehaag\B(H_2)\to\B(H_1)$.
If $u=\sum_{i\in I}v_i\otimes w_i\in\B(H_1)\ehaag\B(H_2)$ where $v =
(v_i)_{i\in I}\in M_{1,I}(\B(H_1))$ and $w = (w_i)_{i\in I}\in
M_{I,1}(\B(H_2))$, then
\[L_{\omega_1}(u)=\sum_{i\in I}\omega_1(v_i)w_i \qand
R_{\omega_2}(u)=\sum_{i\in I}\omega_2(w_i)v_i.\]
Moreover,
\begin{equation}
  \label{eq:slices}
  \langle R_{\omega_2}(u), \omega_1\rangle =
  \langle u, \omega_1\otimes
  \omega_2\rangle=\langle L_{\omega_1}(u), \omega_2\rangle = \sum_{i\in I}
\omega_1(v_i)\omega_2(w_i).
\end{equation}

It was shown in~\cite{spronk} that if $\E\subseteq \B(H_1)$
and $\F\subseteq\B(H_2)$ are closed subspaces then, up to a
complete isometry,
\begin{align}\label{eq:spronk}
  \begin{split}
    \E\ehaag\F &= \{u\in \B(H_1)\ehaag\B(H_2) :
    L_{\omega_1}(u)\in\F\text{ and } R_{\omega_2}(u)\in\E
    \\&\hspace{4cm}\mbox{ for all } \omega_1\in \B(H_1)_*\text{ and } \omega_2\in\B(H_2)_*\}
  \end{split}\\[6pt]
  \begin{split}
    &=\{u\in \B(H_1)\ehaag\B(H_2) :
    L_{\omega_1}(u)\in\F\text{ and } R_{\omega_2}(u)\in\E
    \\&\hspace{4cm}\mbox{ for all } \omega_1\in \B(H_1)^*\text{ and } \omega_2\in\B(H_2)^*\}.\notag
  \end{split}
\end{align}
Moreover~\cite{smith},
\begin{align}\label{eq:spronk2}
\begin{split}
  \E\haag\F&=\{u\in \B(H_1)\haag\B(H_2): L_{\omega_1}(u)\in\F\text{ and } R_{\omega_2}(u)\in\E \\&\hspace{4cm}\mbox{ for
    all } \omega_1\in \B(H_1)^*\text{ and } \omega_2\in\B(H_2)^*\}.
\end{split}
\end{align}

Thus, $\E\haag\F$ can be canonically identified with a subspace of
$\B(H_1)\haag\B(H_2)$ which, on the other hand, sits completely
isometrically in $\B(H_1)\ehaag\B(H_2)$. These identifications are
made in the statement of the following lemma which will be useful
for us later.
\begin{lemma}\label{lem:ss}
  If $H_1,H_2,H_3$ are Hilbert spaces and $\E_1,\E_2\subseteq\B(H_1)$,
  $\F_1,\F_2\subseteq\B(H_2)$ and $\G_1,\G_2\subseteq\B(H_3)$ are
  operator spaces, then
  \begin{gather*}
    (\E_1\ehaag\F_1)\cap(\E_2\haag\F_2)=(\E_1\cap\E_2)\haag(\F_1\cap\F_2)\quad
    \text{and}\\
    (\E_1\ehaag\F_1\ehaag\G_1)\cap(\E_2\haag\F_2\haag\G_2)=(\E_1\cap\E_2)\haag(\F_1\cap\F_2)\haag(\G_1\cap\G_2)
  \end{gather*}
  completely isometrically.
\end{lemma}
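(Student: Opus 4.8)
The plan is to prove the two-fold identity first and then bootstrap to the three-fold case, using the slice-map characterisations~\eqref{eq:spronk} and~\eqref{eq:spronk2} as the central tool. These two equations describe $\E\ehaag\F$ and $\E\haag\F$ as the elements of $\B(H_1)\ehaag\B(H_2)$ (resp.~$\B(H_1)\haag\B(H_2)$) whose left and right slices land in the appropriate spaces. The first observation I would make is that, since $\B(H_1)\haag\B(H_2)$ sits completely isometrically inside $\B(H_1)\ehaag\B(H_2)$ (as remarked just before the lemma), the slice maps $L_{\omega_1}$ and $R_{\omega_2}$ agree on the intersection, so both characterisations are being applied to the same element $u$ living in the smaller space $\B(H_1)\haag\B(H_2)$.

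\medskip

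\textbf{Two-fold case.} Let $u\in(\E_1\ehaag\F_1)\cap(\E_2\haag\F_2)$. Because $u$ lies in $\E_2\haag\F_2\subseteq\B(H_1)\haag\B(H_2)$, the defining condition~\eqref{eq:spronk2} gives $L_{\omega_1}(u)\in\F_2$ and $R_{\omega_2}(u)\in\E_2$ for all functionals $\omega_1,\omega_2$. On the other hand, membership of $u$ in $\E_1\ehaag\F_1$ gives, via~\eqref{eq:spronk}, that $L_{\omega_1}(u)\in\F_1$ and $R_{\omega_2}(u)\in\E_1$. Combining these, $L_{\omega_1}(u)\in\F_1\cap\F_2$ and $R_{\omega_2}(u)\in\E_1\cap\E_2$ for all $\omega_1,\omega_2$; since $u\in\B(H_1)\haag\B(H_2)$ already, applying~\eqref{eq:spronk2} to the subspaces $\E_1\cap\E_2$ and $\F_1\cap\F_2$ shows $u\in(\E_1\cap\E_2)\haag(\F_1\cap\F_2)$. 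The reverse inclusion is immediate: an element of $(\E_1\cap\E_2)\haag(\F_1\cap\F_2)$ lies in $\E_2\haag\F_2$ by monotonicity of the Haagerup tensor product in its arguments, and it lies in $\E_1\ehaag\F_1$ because $\E_1\cap\E_2\subseteq\E_1$, $\F_1\cap\F_2\subseteq\F_1$, and $\haag$ embeds into $\ehaag$. To upgrade this set-theoretic equality to a complete isometry I would check that all the inclusions above are complete isometries onto their images (which they are, being restrictions of the ambient identifications inside $\B(H_1)\ehaag\B(H_2)$), so the two descriptions of $u$ carry the same matrix norms.

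\medskip

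\textbf{Three-fold case.} For the three-variable identity I would reduce to the two-fold statement by associativity of the tensor products and by treating the last two factors as a single slot. Concretely, write $\F_i\ehaag\G_i$ and $\F_i\haag\G_i$ as operator subspaces of $\B(H_2)\ehaag\B(H_3)$ (resp.~its Haagerup analogue), and apply the already-proved two-fold identity once with the pair $(\E_1,\F_1\ehaag\G_1)$ and $(\E_2,\F_2\haag\G_2)$ sitting over $H_1$ and $H_2\otimes H_3$. This reduces $(\E_1\ehaag(\F_1\ehaag\G_1))\cap(\E_2\haag(\F_2\haag\G_2))$ to $(\E_1\cap\E_2)\haag((\F_1\ehaag\G_1)\cap(\F_2\haag\G_2))$, and a second application of the two-fold identity to the inner intersection gives $(\F_1\cap\F_2)\haag(\G_1\cap\G_2)$. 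Associativity of $\haag$ then collapses this to the desired triple Haagerup product.

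\medskip

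\textbf{Main obstacle.} The delicate point will be justifying that the slice-map machinery is interchangeable between the extended and ordinary Haagerup products on the \emph{same} intersecting element, i.e.\ that the ambient identifications in the paragraph preceding the lemma really let me regard one $u$ simultaneously as an element of $\E_1\ehaag\F_1$ and of $\E_2\haag\F_2$ with a single well-defined pair of slices. For the three-fold reduction, the subtle step is ensuring that the two-fold identity applies verbatim when one of the ``factors'' is itself a two-fold tensor product over a tensor-product Hilbert space; this requires the associativity/Fubini-type compatibility of the extended Haagerup product with slice maps, and I would want to confirm that $\F\ehaag\G$ embeds into $\B(H_2H_3)$ in the right way so that~\eqref{eq:spronk} continues to characterise it correctly as a subspace.
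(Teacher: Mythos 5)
Your argument is correct and matches the paper's proof: the two-fold identity is obtained exactly as you describe, by combining the slice-map characterisations~\eqref{eq:spronk} and~\eqref{eq:spronk2} with the injectivity of the Haagerup tensor product, and the paper likewise disposes of the three-fold case in one line by appealing to the associativity of $\haag$ and $\ehaag$. The only caveat is the one you flag yourself: in the reduction, $\F_1\ehaag\G_1$ should be realised on an auxiliary Hilbert space carrying a completely isometric representation of $\B(H_2)\ehaag\B(H_3)$ rather than inside $\B(H_2\otimes H_3)$ (into which $\ehaag$ does not map isometrically), after which the injectivity and associativity of both tensor products make the two applications of the two-fold identity legitimate.
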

\begin{proof}
  Since $\ehaag$ and $\haag$ are both associative, the second equation
  follows from the first.
  If $u\in (\E_1\ehaag\F_1)\cap(\E_2\haag\F_2) \subseteq
  \B(H_1)\haag\B(H_2)$ then $L_\phi(u)\in \F_1\cap\F_2$ and
  $R_\psi(u)\in \E_1\cap \E_2$ whenever $\phi\in \B(H_1)^*$ and $\psi\in
  \B(H_2)^*$. By (\ref{eq:spronk2}), $u\in
  (\E_1\cap\E_2)\haag(\F_1\cap\F_2)$. The converse inclusion follows
  immediately in light of the injectivity of the
  Haagerup tensor product.
\end{proof}

\subsection{Operator multipliers}
We now recall some definitions and results from~\cite{ks}
and~\cite{jtt} that will be needed later. Let $\Hs$ be Hilbert
spaces and $H = H_1\otimess H_n$ be their Hilbertian tensor product.
Set $\HS(H_1,H_2) =
\C_2(H_1^{\dd},H_2)$ and let $\theta_{H_1,H_2} : H_1\otimes
H_2\rightarrow \HS(H_1,H_2)$ be the canonical isometry given by
$\theta (\xi_1\otimes \xi_2)(\eta^{\dd}) = (\xi_1,\eta)\xi_2$ for
$\xi_1,\eta\in H_1$ and $\xi_2\in H_2$. When $n$ is
even, we inductively define
$$\HS(\Hs) \defeq \C_2(\HS(H_2,H_3)^{\dd},
\HS(H_1,H_4,\dots,H_n)),$$ and let
$\theta_{\Hs} : H \rightarrow \HS(\Hs)$
be given by $$\theta_{\Hs}(\xi_{2,3}\otimes
\xi)=\theta_{\HS(H_2,H_3),\HS(H_1,H_4,\dots,H_n)}(\theta_{H_2,H_3}(\xi_{2,3})\otimes
\theta_{H_1,H_4,\dots,H_n}(\xi)),$$ where $\xi_{2,3}\in H_2\otimes
H_3$ and $\xi\in H_1\otimes H_4\otimess H_n$. When $n$ is odd, we let
$$\HS(\Hs) \defeq  \HS(\bb{C},\Hs).$$
If $K$ is a Hilbert space, we will identify $\C_2(\bb{C}^{\dd},K)$ with $K$ via the map $S\rightarrow S(1^{\dd})$.
The isomorphism $\theta_{\Hs}$ in the odd case is given by
$$\theta_{\Hs}(\xi) =
\theta_{\bb{C},\Hs}(1\otimes\xi).$$ We will omit the subscripts
when they are clear from the context and simply write $\theta$.

If $\xi\in H_1\otimes H_2$ we let $\|\xi\|_{\op}$ denote the
operator norm of $\theta(\xi)$. By $\|\cdot\|_2$ we will denote the
Hilbert-Schmidt norm.

Let
$$\Gamma(\Hs) =
\begin{cases}
(H_1\otimes H_2)\odot (H_2\otimes H_3)^{\dd}\odot\dots\odot (H_{n-1}\otimes H_n)\;&\text{$n$ even,}\\
(H_1\otimes H_2)^{\dd}\odot (H_2\otimes H_3)\odot\dots\odot
(H_{n-1}\otimes H_n)&\text{$n$ odd.}
\end{cases}$$
We equip $\Gamma(\Hs)$ with the Haagerup norm $\|\cdot\|_{\hh}$
where each of the terms of the algebraic tensor product is given
the opposite operator space structure to the one arising from the
embedding $H\otimes K\hookrightarrow
(\C_2(H^{\dd},K),\|\cdot\|_{\op})$. We denote by
$\|\cdot\|_{2,\wedge}$ the projective norm on $\Gamma(\Hs)$ where
each of the terms is given its Hilbert space norm.

Suppose $n$ is even. For each $\nph\in \B(H)$ we let $S_{\nph} :
\Gamma(\Hs)\rightarrow \B(H_1^{\dd},H_n)$ be the map
given by 
$$S_{\nph}(\xi) =
\theta(\nph(\xi_{1,2}\otimes\xi_{3,4}\otimess\xi_{n-1,n}))(\theta(\eta_{2,3}^{\dd}))
(\theta(\eta_{4,5}^{\dd}))\dots(\theta(\eta_{n-2,n-1}^{\dd}))$$
where  $\zeta =
\xi_{1,2}\odot\eta_{2,3}^{\dd}\dots\odot\xi_{n-1,n}\in
\Gamma(\Hs)$ is an elementary tensor.
In particular, if $A_i\in \B(H_i)$, $i=1,\ldots,n$, and $\varphi=A_1\otimes\ldots\otimes A_n$
 then
$$S_{\nph}(\zeta)=A_n\theta(\xi_{n-1,n})\ldots A_3^{\dd}\theta(\eta_{2,3}^{\dd})A_2\theta(\xi_{1,2})A_1^{\dd}.$$

Now
suppose that $n$ is odd and let $\zeta\in\Gamma(\Hs)$ and
$\xi_1\in H_1$. Then
$$\xi_1\otimes\zeta\in H_1\odot\Gamma(\Hs) =
\Gamma(\bb{C},\Hs).$$ For $\phi\in \B(H)$ we let $S_{\nph}(\zeta)$ be the
operator defined on $H_1$ by
$$S_{\nph}(\zeta)(\xi_1) = S_{1\otimes\nph}(\xi_1\otimes\zeta).$$
Note that $S_{1\otimes\nph}(\xi_1\otimes\zeta)\in
\C_2(\bb{C}^d,H_n)$; thus, $S_{\nph}(\zeta)(\xi_1)$ can be viewed as
an element of $H_n$. It was shown in~\cite{jtt} that
$S_{\nph}(\zeta) \in \B(H_1,H_n)$. If
$\zeta=\eta_{1,2}^{\dd}\otimes\xi_{2,3}\otimes\ldots\otimes\xi_{n-1,n}$
and $\varphi=A_1\otimes\ldots\otimes A_n$ for $A_i\in \B(H_i)$,
$i=1,\ldots,n$ then
$$S_{\nph}(\zeta)=A_n\theta(\xi_{n-1,n})\ldots
A_3\theta(\xi_{2,3})A_2^{\dd}\theta(\eta_{1,2}^{\dd})A_1.$$

As observed in \cite[Remark~4.3]{jtt}, for any $\phi\in
\B(H)$ and $\zeta\in\Gamma(\Hs)$,
\begin{equation}
  \label{rk4.3}
  \|S_\phi(\zeta)\|_{\op}\leq \|\phi\|\,\|\zeta\|_{2,\wedge}.
\end{equation}
On the other hand, an element $\nph\in \B(H)$ is called a {\it
  concrete operator multiplier} if there exists $C > 0$ such that
$\|S_{\nph}(\zeta)\|_{\op}\leq C\|\zeta\|_{\hh}$ for each $\zeta\in
\Gamma(\Hs)$. When $n=2$, this is equivalent to
$\|S_{\nph}(\zeta)\|_{\op}\leq C\|\theta(\zeta)\|_{\op}$ for each
$\zeta\in H_1\otimes H_2$. We call the smallest constant $C$ with this
property the concrete multiplier norm of $\nph$.

Now let $\A_i$ be a C*-algebra and $\pi_i :
\A_i\rightarrow\B(H_i)$ be a representation, $i = 1,\dots,n$. Set
$\pi = \pi_1\otimess\pi_n : \A_1\otimess \A_n\rightarrow
\B(H_1\otimess H_n)$ (here, and in the sequel, by $\A\otimes \B$
we will denote the minimal tensor product of the C*-algebras $\A$
and $\B$). An element $\nph\in \A_1\otimess\A_n$ is called a {\it
$\pi_1,\dots,\pi_n$-multiplier} if $\pi(\nph)$ is a concrete
operator multiplier. We denote by $\|\nph\|_{\pi_1,\dots,\pi_n}$
the concrete multiplier norm of $\pi(\nph)$. We call $\nph$ a {\it
universal multiplier} if it is a $\pi_1,\dots,\pi_n$-multiplier
for all representations $\pi_i$ of $\A_i$, $i = 1,\dots,n$. We
denote the collection of all universal multipliers by $M(\As)$;
from this definition, it immediately follows that
\[ \A_1\odots\A_n\subseteq M(\As)\subseteq \A_1\otimess\A_n.\]
It was observed in~\cite{jtt} that if $\phi\in M(\As)$ then
\[\|\phi\|_{\mm}\defeq\sup\{\|\nph\|_{\pi_1,\dots,\pi_n}:
  \text{$\pi_i$ is a representation of $\A_i$, $i=1,\dots,n$}\}<\infty.\]
It is obvious that if $\A_i$ and
$\B_i$ are C*-algebras and $\rho_i : \A_i\rightarrow\B_i$ is a
$*$-isomorphism, $i = 1,\dots,n$, then
$$(\rho_1\otimess\rho_n)(M(\As)) = M(\B_1,\dots,\B_n).$$

If $\nph$ is an operator, and $\{\nph_{\nu}\}$ a net of operators,
acting on $H_1\otimes\dots\otimes H_n$ we say that $\{\nph_{\nu}\}$
converges semi-weakly to $\nph$ if
$(\nph_{\nu}\xi,\eta)\rightarrow_{\nu} (\nph\xi,\eta)$ for all
$\xi,\eta\in H_1\odot\dots\odot H_n$. The following characterisation
of universal multipliers was established in~\cite{jtt} (see
Theorem~6.5, the subsequent remark and the proof of Proposition 6.2)
and will be used extensively in the sequel.

\begin{theorem}\label{jtt}
Let $\A_i\subseteq\B(H_i)$ be a C*-algebra, $i = 1,\dots,n$,
and $\nph\in \A_1\otimess\A_n$. Suppose that $n$
is even. The following are equivalent:

(i) \ $\nph\in M(\As)$;

(ii) there exists a net $\{\nph_{\nu}\}$ where $\nph_{\nu} =
A_1^{\nu}\odot A_2^{\nu}\odot\dots\odot A_n^{\nu}$ and $A_i^{\nu}$
is a finite block operator matrix with entries in $\A_i$ such
that $\nph_{\nu}\rightarrow\nph$ semi-weakly, 
$\|\nph_{\nu}\|_{\mm}\leq\Pi\|A_{2i}^{\nu}\|\Pi\|A_{2i+1}^{\nu\dd}\|$ and the operator
norms $\|A_i^{\nu}\|$ for $i$ even and $\|A_i^{\nu}{}^\dd\|$ for
$i$ odd, are bounded by a constant depending only on $n$.

For every net $\{\nph_{\nu}\}$ satisfying (ii) we have that
$S_{\nph_{\nu}}(\zeta)\rightarrow S_{\nph}(\zeta)$ weakly for all
$\zeta = \xi_{1,2}\otimes\dots\otimes\xi_{n-1,n}
\in\Gamma(H_1,\dots,H_n)$ and $\sup_\nu\|\nph_\nu\|_{\mm}$ is finite.

Moreover, the net $\nph_{\nu}$ can be chosen in (ii) so that
$A_i^{\nu}\rightarrow A_i$ (resp.~$A_i^{\nu}{}^{\dd}\rightarrow
A_i^{\dd}$) strongly for $i$ even (resp.~for $i$ odd) for some
bounded block operator matrix $A_i$ with entries in $\A_i''$
(resp.~$(\A_i^{\dd})''$) such that
$$S_{\id\otimes\dots\otimes\id(\nph)}(\zeta) =
A_n(\theta(\xi_{n-1,n})\otimes I)\dots(\theta(\xi_{1,2})\otimes
I)A_1^{\dd},$$ for all $\zeta =
\xi_{1,2}\otimes\dots\otimes\xi_{n-1,n} \in\Gamma(H_1,\dots,H_n)$.

A similar statement holds if $n$ is odd.
\end{theorem}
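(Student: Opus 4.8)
The plan is to prove the stated equivalence in two directions and then extract the two convergence assertions from the constructions used in each direction; I treat the even case, the odd case being identical after the reduction $\HS(\Hs)=\HS(\bb{C},\Hs)$ that prepends a trivial $\bb{C}$-slot.

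For the implication (ii)$\Rightarrow$(i) the key is the explicit form of $S_\phi$ on elementary tensors. First I would record that for a single finite elementary tensor $\psi=A_1\odot\dots\odot A_n$ of block operator matrices, $S_\psi(\zeta)$ is the alternating product of the $A_i$ (for $i$ even) and the $A_i^\dd$ (for $i$ odd) interleaved with the Hilbert--Schmidt operators $\theta(\xi_{i,i+1})$, exactly as displayed just before the theorem. Because the Haagerup norm on $\Gamma(\Hs)$ is built from the operator-space structure of each Hilbert--Schmidt factor, submultiplicativity of the Haagerup norm gives $\|S_\psi(\zeta)\|_{\op}\le\prod\|A_{2i}\|\prod\|A_{2i+1}^\dd\|\,\|\zeta\|_\hh$, hence $\|\psi\|_\mm\le\prod\|A_{2i}\|\prod\|A_{2i+1}^\dd\|$. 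Granting the uniform bound in (ii), I would pass to the limit: semi-weak convergence $\nph_\nu\to\nph$ forces $S_{\nph_\nu}(\zeta)\to S_\nph(\zeta)$ weakly for each elementary $\zeta$, since $(S_{\nph_\nu}(\zeta)u,v)$ is, for fixed vectors, a finite combination of matrix coefficients of $\nph_\nu$ on $H_1\odots H_n$. Weak lower semicontinuity of the operator norm then yields $\|S_\nph(\zeta)\|_{\op}\le(\sup_\nu\|\nph_\nu\|_\mm)\|\zeta\|_\hh$, with $\sup_\nu\|\nph_\nu\|_\mm$ finite by the previous estimate. As the same argument applies verbatim in every representation, $\nph\in M(\As)$; this simultaneously establishes the ``for every net satisfying (ii)'' clause.

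For (i)$\Rightarrow$(ii) the idea is to read the multiplier condition as \emph{complete} boundedness of a multilinear map and then invoke Theorem~\ref{cs}. Fixing faithful representations, the assignment $\zeta\mapsto S_\nph(\zeta)$ is multilinear in the Hilbert--Schmidt slots, and the crucial point is that the universality of $\nph$ — being a multiplier for \emph{all} representations, in particular for the amplifications $\pi_i\otimes\id_{\ell_2}$ — is precisely what upgrades the single operator-norm bound $\|S_\nph(\zeta)\|_{\op}\le C\|\zeta\|_\hh$ to a complete bound. This produces a normal $\A_1',\dots,\A_n'$-modular completely bounded multilinear map on $\B(H_2,H_1)\timess\B(H_n,H_{n-1})$, which by Theorem~\ref{cs} equals $\gamma(u)$ for a unique $u=A_1\odot\dots\odot A_n$ in the extended Haagerup tensor product of the von Neumann algebras $\A_i''$; the opposite operator-space structures account for the alternating duals $A_{2i+1}^\dd$. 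Reading off $\gamma(u)$ gives exactly the strong-convergence factorisation $S_\nph(\zeta)=A_n(\theta(\xi_{n-1,n})\otimes I)\dots(\theta(\xi_{1,2})\otimes I)A_1^\dd$ asserted in the last paragraph, with entries in $\A_i''$.

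It remains to descend from $\A_i''$ to finite block matrices over $\A_i$. Here I would apply Kaplansky density to each (possibly infinite) block matrix $A_i\in M_{J,J}(\A_i'')$, approximating it on finite corners by finite block matrices $A_i^\nu$ with entries in $\A_i$, strongly and with $\|A_i^\nu\|\le\|A_i\|$ for $i$ even and $\|A_i^{\nu\dd}\|\le\|A_i^\dd\|$ for $i$ odd. Setting $\nph_\nu=A_1^\nu\odot\dots\odot A_n^\nu$, the strong convergence of the factors delivers semi-weak convergence $\nph_\nu\to\nph$, while the norm bounds give the uniform control on $\|\nph_\nu\|_\mm$ through the estimate of the first paragraph, completing (ii). The main obstacle is the completely-bounded upgrade: one must check that the amplified multiplier norms stay uniformly bounded and identify the limiting object as a genuine normal modular completely bounded map so that Theorem~\ref{cs} applies. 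The secondary delicate point is the consistent bookkeeping of the opposite operator-space structures and the alternating $(\cdot)^\dd$ across the even and odd cases, which must be tracked carefully through both the Christensen--Sinclair identification and the Kaplansky approximation.
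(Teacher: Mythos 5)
This theorem is not proved in the paper at all: it is imported verbatim from the earlier work \cite{jtt} (the text preceding the statement points to Theorem~6.5, the remark following it, and the proof of Proposition~6.2 of that reference), so there is no internal proof to compare your attempt against. Judged on its own terms, your architecture does match the strategy of the cited source: the direction (i)$\Rightarrow$(ii) via the Christensen--Sinclair representation (Theorem~\ref{cs}) applied to the normal modular extension of $\Phi_\phi$, giving $u=A_1\odots A_n$ with entries in $\A_i''$, followed by a Kaplansky-density descent on finite corners to entries in $\A_i$; and the direction (ii)$\Rightarrow$(i) via the Haagerup-norm estimate $\|S_\psi(\zeta)\|_{\op}\le\prod\|A_{2i}\|\prod\|A_{2i+1}^{\dd}\|\,\|\zeta\|_{\hh}$ together with a weak limit argument.

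There is, however, one genuine gap in your (ii)$\Rightarrow$(i) argument: the sentence ``as the same argument applies verbatim in every representation'' does not hold. Semi-weak convergence $\nph_\nu\to\nph$ is a statement about the one fixed faithful representation $\A_i\subseteq\B(H_i)$; a uniformly bounded net in a C*-algebra that converges weakly in one faithful representation need not converge weakly in another (take rank-one projections in $\K(H)$, which tend weakly to $0$ in the identity representation but not under a representation built from a singular state). Since membership in $M(\As)$ requires the bound $\|S_{\pi(\nph)}(\zeta)\|_{\op}\le C\|\zeta\|_{\hh}$ for \emph{every} $\pi=\pi_1\otimess\pi_n$, you must supply a transfer mechanism. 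The one used in the source (and reused in this paper's proof of Theorem~\ref{ecb}, (iii)$\Rightarrow$(i)) is: first verify the bound for all ampliations $\id^{(\lambda)}$ of the given representation, where semi-weak convergence does transfer because matrix coefficients at vectors of $H_1^{\lambda}\odots H_n^{\lambda}$ reduce, using the uniform bound on $\|\nph_\nu\|$, to limits of matrix coefficients in the original representation; then invoke approximate subordination (Voiculescu, Hadwin) together with \cite[Theorem~5.1]{jtt} to dominate $\|\nph\|_{\rho_1,\dots,\rho_n}$ by $\|\nph\|_{\id^{(\lambda)},\dots,\id^{(\lambda)}}$ for arbitrary $\rho_i$. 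A second, smaller imprecision: your claim that $(S_{\nph_\nu}(\zeta)u,v)$ is a finite combination of matrix coefficients of $\nph_\nu$ is only literally true when each $\xi_{i,i+1}$ lies in the algebraic tensor product $H_i\odot H_{i+1}$; for general $\xi_{i,i+1}\in H_i\otimes H_{i+1}$ you again need the uniform bound on $\|\nph_\nu\|$ in $\B(H_1\otimess H_n)$ (which does follow from the hypotheses of (ii)) to pass to the limit. With these two repairs your outline is sound.
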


Finally, recall that an element $a$ of a C*-algebra $\A$ is
called {\it compact} if the operator $x\rightarrow axa$ on $\A$ is
compact. Let $\K(\A)$ be the collection of all compact elements of
$\A$. It is well known~\cite{erdos,ylinen} that $a\in
\K(\A)$ if and only if there exists a faithful representation
$\pi$ of $\A$ such that $\pi(a)$ is a compact operator. Moreover,
$\pi$ can be taken to be the reduced atomic representation of
$\A$. The notion of a compact element of a C*-algebra will play a
central role in Sections~\ref{schar} and~\ref{sec:cccm} of the
paper.

\section{Completely compact maps}\label{completelyc}

We start by recalling the notion of a completely compact map
introduced in~\cite{saar} and studied further in~\cite{webster} and~\cite{oik}.
By way of motivation, recall that if $\X$ and $\Y$ are Banach
spaces then a bounded linear map $\Phi:\X\to\Y$ is compact if and
only if for every $\epsilon>0$, there exists a finite dimensional
subspace $F\subseteq\Y$ such that $\dist(\Phi(x),F)<\epsilon$ for
every $x$ in the unit ball of~$\X$.

Now let $\X$ and $\Y$ be operator spaces. A completely
bounded map $\Phi : \X\rightarrow\Y$ is called {\it completely
compact} if for each $\epsilon > 0$ there exists a finite
dimensional subspace $F\subseteq\Y$ such that
\[\dist(\Phi^{(m)}(x), M_m(F)) < \epsilon,\] for every
$x\in M_{m}(\X)$ with $\|x\|\leq 1$ and every $m\in \bb{N}$. We
extend this definition to multilinear maps: if $\Y,
\X_1,\dots,\X_n$ are operator spaces and $\Phi :
\X_1\timess\X_n\rightarrow\Y$ is a completely bounded multilinear
map, we call $\Phi$ {\it completely compact} if for each $\epsilon
> 0$ there exists a finite dimensional subspace $F\subseteq\Y$
such that
\[\dist(\Phi^{(m)}(x_1,\dots,x_n), M_m(F)) < \epsilon,\] for all
$x_i\in M_{m}(\X_i)$, $\|x_i\|\leq 1$, $i = 1,\dots,n$, and all
$m\in\bb{N}$. We
denote by $\CC(\X_1\timess\X_n, \Y)$ %
the space of all completely bounded completely compact multilinear
maps from $\X_1\timess\X_n$ into $\Y$. A straightforward
verification shows the following:

\begin{remark}\label{cceq}
A completely bounded map $\Phi : \X_1\timess\X_n\rightarrow\Y$ is completely compact if and only if its
linearisation $\tilde{\Phi} : \X_1\haags\X_n\rightarrow\Y$ is completely compact.
\end{remark}

In view of this remark, we frequently identify the spaces
$\CC(\X_1\timess\X_n,\Y)$ and $\CC(\X_1\haags\X_n,\Y)$. The next
result is essentially due to Saar (see Lemmas~1 and~2
of~\cite{saar}).

\begin{proposition}\label{ccc}
(i) \ The space $\CC(\X_1\timess\X_n, \Y)$ is closed in
$\CB(\X_1\timess\X_n, \Y)$.

(ii) Let ${\mathcal E}$, ${\mathcal F}$ and ${\mathcal G}$ be
operator spaces. If $\Phi\in \CC ({\mathcal E},{\mathcal F})$ and
$\Psi\in \CB({\mathcal F},{\mathcal G})$  then $\Psi\circ\Phi\in
\CC({\mathcal E},{\mathcal G})$. If $\Phi\in \CC ({\mathcal
F},{\mathcal G})$ and $\Psi\in \CB({\mathcal E},{\mathcal F})$  then
$\Phi\circ\Psi\in \CC({\mathcal E},{\mathcal G})$.
\end{proposition}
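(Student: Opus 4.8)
The goal is to prove Proposition~\ref{ccc}, which has two parts: (i) that completely compact multilinear maps form a closed subspace of the completely bounded ones, and (ii) that the composition of a completely compact map with a completely bounded map, on either side, is completely compact.

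\medskip

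\noindent\textbf{Plan for part (i).} By Remark~\ref{cceq}, it suffices to work with the linearisations on the Haagerup tensor product, so I identify $\CC(\X_1\timess\X_n,\Y)$ with $\CC(\X_1\haags\X_n,\Y)$ and prove closedness there. Let $\Phi_k\to\Phi$ in the $\cb$-norm with each $\Phi_k$ completely compact, and fix $\epsilon>0$. First I would choose $k$ so large that $\|\Phi-\Phi_k\|_{\cb}<\epsilon/2$. Since $\Phi_k$ is completely compact, there is a finite dimensional subspace $F\subseteq\Y$ with $\dist(\Phi_k^{(m)}(x),M_m(F))<\epsilon/2$ for every contraction $x\in M_m(\X_1\haags\X_n)$ and every $m$. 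For such $x$ I would then estimate, using the triangle inequality,
\[
\dist(\Phi^{(m)}(x),M_m(F))\le \|\Phi^{(m)}(x)-\Phi_k^{(m)}(x)\|+\dist(\Phi_k^{(m)}(x),M_m(F)).
\]
The first term is at most $\|\Phi-\Phi_k\|_{\cb}\|x\|<\epsilon/2$, and the second is $<\epsilon/2$, so the total is $<\epsilon$ with the same $F$ working for all $m$. This exhibits $\Phi$ as completely compact.

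\medskip

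\noindent\textbf{Plan for part (ii).} I treat the two composition statements separately. Suppose $\Phi\in\CC(\E,\F)$ and $\Psi\in\CB(\F,\G)$; I want $\Psi\circ\Phi$ completely compact. Fix $\epsilon>0$. Complete compactness of $\Phi$ yields a finite dimensional $F\subseteq\F$ with $\dist(\Phi^{(m)}(x),M_m(F))<\epsilon/(1+\|\Psi\|_{\cb})$ for all contractions $x\in M_m(\E)$ and all $m$. The natural candidate for the approximating subspace of $\G$ is $F'=\Psi(F)$, which is finite dimensional since $\Psi$ is linear. Given a contraction $x$, pick $y\in M_m(F)$ with $\|\Phi^{(m)}(x)-y\|$ small; then $\Psi^{(m)}(y)\in M_m(F')$ and
\[
\dist((\Psi\circ\Phi)^{(m)}(x),M_m(F'))\le\|\Psi^{(m)}(\Phi^{(m)}(x)-y)\|\le\|\Psi\|_{\cb}\,\|\Phi^{(m)}(x)-y\|<\epsilon,
\]
using $(\Psi\circ\Phi)^{(m)}=\Psi^{(m)}\circ\Phi^{(m)}$ and that $\Psi^{(m)}$ maps $M_m(F)$ into $M_m(F')$. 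For the other composition, with $\Phi\in\CC(\F,\G)$ and $\Psi\in\CB(\E,\F)$, I would observe that $\Psi^{(m)}$ sends the unit ball of $M_m(\E)$ into the ball of radius $\|\Psi\|_{\cb}$ in $M_m(\F)$; rescaling and applying complete compactness of $\Phi$ to these images gives a single finite dimensional $F\subseteq\G$ that works uniformly in $m$, again because $(\Phi\circ\Psi)^{(m)}=\Phi^{(m)}\circ\Psi^{(m)}$.

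\medskip

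\noindent I do not anticipate a serious obstacle here: the argument is a direct transcription of the classical Banach space proof that compact operators form a closed ideal, with the operator-space refinement handled by the key observation that a single finite dimensional subspace $F$ serves for \emph{all} amplification levels $m$ simultaneously. The only point requiring care is the bookkeeping in part (ii): one must verify that the approximating subspace can be taken independently of $m$, which is immediate since $\Psi(F)$ and $F$ themselves do not depend on $m$, and that the amplifications compose correctly, i.e.\ $(\Psi\circ\Phi)^{(m)}=\Psi^{(m)}\circ\Phi^{(m)}$, which is a routine consequence of the definitions. No result beyond Remark~\ref{cceq} and the definition of complete compactness is needed.
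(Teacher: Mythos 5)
Your argument is correct and is exactly the standard one; the paper itself gives no proof of Proposition~\ref{ccc}, merely attributing it to Lemmas~1 and~2 of Saar, and your direct verification (a single finite dimensional $F$ uniform in $m$, plus $(\Psi\circ\Phi)^{(m)}=\Psi^{(m)}\circ\Phi^{(m)}$ and the rescaling trick for the second composition) is precisely the intended argument.
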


Let $\Hs$ be Hilbert spaces. Recall the  isometry
\[\gamma:\B(H_1)\ehaags \B(H_n)\to \CBS(\B(H_2,H_1)\timess \B(H_n,H_{n-1}),\B(H_n,H_1))\] from
Theorem~\ref{cs}. Let us identify a completely bounded map defined
on $\B(H_2,H_1)\timess \B(H_{n},H_{n-1})$ with the
corresponding completely bounded map defined on
\[\Bh\defeq\B(H_2,H_1)\haags \B(H_{n},H_{n-1}).\] For $u\in \B(H_1)\ehaags \B(H_n)$ we let $\gamma_0(u)$ be the restriction
of $\gamma(u)$ to \[\Kh\defeq\K(H_2,H_1)\haags \K(H_{n},H_{n-1}).\]

\begin{proposition}
  \label{prop:ran-gamma0}
  The map $\gamma_0$ is an isometry from $\B(H_1)\ehaags\B(H_n)$
  onto
  $\CB(\Kh,\B(H_n,H_1))$.
\end{proposition}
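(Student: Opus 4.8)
The plan is to show $\gamma_0$ is a well-defined isometry onto $\CB(\Kh,\B(H_n,H_1))$. We already know from Theorem~\ref{cs} that $\gamma$ is an isometry from $\B(H_1)\ehaags\B(H_n)$ onto the normal modular maps on $\Bh$, and $\gamma_0(u)$ is simply the restriction of $\gamma(u)$ to the subspace $\Kh\subseteq\Bh$. So $\gamma_0$ is manifestly a completely bounded map with $\|\gamma_0(u)\|_{\cb}\leq\|\gamma(u)\|_{\cb}=\|u\|_{\eh}$, and the real content is to prove the reverse inequality (so that $\gamma_0$ is isometric) and that $\gamma_0$ is surjective onto \emph{all} of $\CB(\Kh,\B(H_n,H_1))$, not merely onto the modular maps.

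**Isometry.** First I would establish that restriction to $\Kh$ does not decrease the norm, i.e.\ $\|\gamma_0(u)\|_{\cb}\geq\|u\|_{\eh}$. The key point is that the formula $\gamma(u)(T_1,\dots,T_{n-1})=A_1(T_1\otimes I)A_2\cdots(T_{n-1}\otimes I)A_n$ is computed on elementary tensors of rank-one operators, which already lie in $\Kh$. More precisely, the pairing in~\eqref{product} that recovers $\|u\|_{\eh}$ only involves evaluating $\gamma(u)$ against functionals built from vector states, equivalently against rank-one operators $T_i$; these are compact. Thus the supremum defining $\|\gamma(u)\|_{\cb}$ over $\Bh$ is already attained (up to the usual density argument) on the compact operators, giving $\|\gamma_0(u)\|_{\cb}=\|\gamma(u)\|_{\cb}=\|u\|_{\eh}$. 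Concretely, I would fix finite-rank truncations $A_i^{I_0}$ and note that the matrix entries of $u=A_1\odots A_n$ are recovered by sandwiching $\gamma_0(u)$ between rank-one operators, so the norm of the representing element is controlled by $\|\gamma_0(u)\|_{\cb}$.

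**Surjectivity.** This is the step I expect to be the main obstacle. Given an arbitrary $\Psi\in\CB(\Kh,\B(H_n,H_1))$, I must produce $u\in\B(H_1)\ehaags\B(H_n)$ with $\gamma_0(u)=\Psi$. The strategy is: (1) regard $\Psi$ as a completely bounded multilinear map on $\K(H_2,H_1)\timess\K(H_n,H_{n-1})$ via Remark~\ref{cceq}; (2) invoke the Christensen--Sinclair representation theorem (the source of Theorem~\ref{cs}), which applies to completely bounded multilinear maps on spaces of compact operators and yields a representation of exactly the form~\eqref{formo} with bounded block operator matrices $A_i$; this produces a candidate $u=A_1\odots A_n\in\B(H_1)\ehaags\B(H_n)$. (3) It then remains to check that $\gamma_0(u)=\Psi$, i.e.\ that the normal modular extension $\gamma(u)$ restricts correctly. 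Here I would use that $\gamma(u)$ and the multilinear map arising from $\Psi$ agree on elementary tensors of compact operators; since such tensors are norm-dense in $\Kh$ and both maps are completely bounded (hence norm-continuous), they coincide on all of $\Kh$. The subtlety is that $\Psi$ carries no a priori modularity, but the Christensen--Sinclair representation on compact operators does not require it---modularity in Theorem~\ref{cs} was used to pin down a \emph{canonical} form, whereas here we only need \emph{some} representation of the form~\eqref{formo}, which Christensen--Sinclair supplies for any completely bounded multilinear map on compact operators. I would verify that the block operator matrices can be taken with entries in all of $\B(H_i)$ (no modularity constraint), so that the resulting $u$ lies in the full extended Haagerup product $\B(H_1)\ehaags\B(H_n)$, as required.
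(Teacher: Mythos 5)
Your strategy does yield the proposition, but it is genuinely different from the paper's argument. The paper handles surjectivity and the lower norm bound with a single device: given $\Phi\in\CB(\Kh,\B(H_n,H_1))$ it forms the second dual $\Phi^{**}$ on the normal Haagerup tensor product $\B(H_2,H_1)\shaags\B(H_n,H_{n-1})$, composes with the canonical projection $Q:\B(H_n,H_1)^{**}\to\B(H_n,H_1)$ to obtain a \emph{normal} completely bounded multilinear map on the spaces $\B(H_{i+1},H_i)$, and then quotes Theorem~\ref{cs} verbatim to get $u$ with $\gamma(u)|_{\Kh}=\Phi$; the isometry falls out of the same construction, since $\gamma(u)=Q\circ\gamma_0(u)^{**}|_{\Bh}$ gives $\|u\|_{\eh}=\|\gamma(u)\|_{\cb}\leq\|\gamma_0(u)^{**}\|_{\cb}=\|\gamma_0(u)\|_{\cb}$. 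You instead apply the Christensen--Sinclair representation theorem directly on the compacts. That route is viable, but be aware of what it silently uses: Christensen--Sinclair is stated for C*-algebras, so to reach the form~\eqref{formo} for a map on $\K(H_2,H_1)\timess\K(H_n,H_{n-1})$ you need the corner/linking-algebra trick for the rectangular spaces \emph{and} the fact that every nondegenerate representation of $\K(H)$ is a multiple of the identity (which is what lets the abstract representation $V_0\pi_1(x_1)V_1\cdots$ collapse to block matrices over the original $\B(H_i)$, hence to an element of $\B(H_1)\ehaags\B(H_n)$). This is exactly the implication the paper's introduction asserts without proof; the second-dual argument is how the authors avoid writing it out while reusing Theorem~\ref{cs}. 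Your step (3) (agreement on elementary tensors plus norm density in $\Kh$) is fine, and you are right that no modularity is needed.

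On the isometry, your primary argument is correct but should be run precisely as a weak* density argument: the unit ball of $M_m(\K(H_{i+1},H_i))$ is $\sigma$-weakly dense in that of $M_m(\B(H_{i+1},H_i))$, $\gamma(u)^{(m)}$ is separately $\sigma$-weak* continuous by Theorem~\ref{cs}, and the norm is weak* lower semicontinuous, so iterating limits one variable at a time gives $\|\gamma(u)\|_{\cb}\leq\|\gamma_0(u)\|_{\cb}$. Your ``concrete'' alternative --- recovering the matrix entries of $u=A_1\odots A_n$ by sandwiching $\gamma_0(u)$ between rank-one operators and claiming this controls the norm of the representing element --- does not work as stated: $\|u\|_{\eh}$ is an \emph{infimum} over representations, and knowing the entries of one particular representation gives no upper bound on $\|A_1\|\cdots\|A_n\|$, hence no bound on $\|u\|_{\eh}$. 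Drop that sentence and keep the density argument.
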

\begin{proof}
  Let $\Phi\in\CB(\Kh,\B(H_n,H_1))$.
  Since $\Phi$ is completely bounded, its second
  dual
  $$\Phi^{**} : \B(H_2,H_1)\shaags\B(H_{n},H_{n-1}) \rightarrow \B(H_n,H_1)^{**}$$ is completely bounded
  (here $\shaag$ denotes the normal Haagerup tensor
  product~\cite{effros_ruan}). Let $Q : \B(H_n,H_1)^{**}\rightarrow\B(H_n,H_1)$ be the
  canonical projection. The multilinear map
  \[\tilde{\Phi} : \B(H_2,H_1)\times\dots\times \B(H_{n},H_{n-1})\rightarrow
  \B(H_n,H_1)\] corresponding to $Q\circ \Phi^{**}$ is completely bounded and, by (5.22)
  of~\cite{effros_ruan}, weak* continuous in each variable. By
  Theorem~\ref{cs}, there exists an element $u \in
  \B(H_1)\ehaags\B(H_n)$
  such that $\tilde{\Phi} = \gamma(u)$.
  Hence $\gamma_0(u)=\gamma(u)|_{\Kh}=\tilde\Phi|_{\Kh}=\Phi$.
Thus $\gamma_0$ is surjective.

Fix $u\in \B(H_1)\ehaags \B(H_n)$. From the definition of
$\gamma_0$ we have $\|\gamma_0(u)\|_{\cb}\leq \|\gamma(u)\|_{\cb}
= \|u\|_{\eh}$. On the other hand, the restrictions of the maps
$Q\circ \gamma_0(u)^{**}$ and $\gamma(u)$ to $\Kh$ coincide, and
since both maps are weak* continuous, $\gamma(u) = Q\circ
\gamma_0(u)^{**}|_{\Bh}$. Hence,
$$\|u\|_{\eh} \leq \|Q\circ \gamma_0(u)^{**}\|_{\cb} \leq
\|\gamma_0(u)^{**}\|_{\cb} = \|\gamma_0(u)\|_{\cb}.$$ Thus,
$\gamma_0$ is an isometry.
\end{proof}

\begin{theorem}\label{muccc}
  Let $\Hs$ be Hilbert spaces. The image under $\gamma_0$ of the operator space
  $\E\defeq \K(H_1)\haag(\B(H_2)\ehaags\B(H_{n-1}))\haag \K(H_n)$ is
  $\F\defeq \CC(\Kh,\K(H_n,H_1))$.
\end{theorem}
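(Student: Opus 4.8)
The plan is to prove both inclusions $\gamma_0(\E)\subseteq\F$ and $\F\subseteq\gamma_0(\E)$ with a single device: compression of the outer legs by finite-rank projections. Fix increasing nets of finite-rank projections $p\in\K(H_1)$ and $q\in\K(H_n)$ converging strongly to the identities. For $u=A_1\odots A_n\in\B(H_1)\ehaags\B(H_n)$ set $u_{p,q}=(pA_1)\odot A_2\odots A_{n-1}\odot(A_nq)$; since $p$ and $q$ factor out of the Christensen--Sinclair product, $\gamma_0(u_{p,q})=p\,\gamma_0(u)(\cdot)\,q$. The outer legs $pA_1$ and $A_nq$ have entries in $p\B(H_1)$ and $\B(H_n)q$, hence are compact, and, having range in the finite-dimensional spaces $pH_1$ and $qH_n$, they make the two outer tensor junctions Haagerup rather than merely extended Haagerup. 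Thus $u_{p,q}\in\K(H_1)\haag(\B(H_2)\ehaags\B(H_{n-1}))\haag\K(H_n)=\E$. (This last assertion is the routine but slightly technical point that a tensor factor whose leg factors through a finite-dimensional space lies in the Haagerup tensor product, which I would verify through the slice-map description~\eqref{eq:spronk2}.)

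\textbf{Forward inclusion.}
For $u\in\E$, multiplying the first leg by $p$ and the last by $q$ is the $\E$-endomorphism obtained by functoriality from the complete contractions $a\mapsto pa$ on $\K(H_1)$ and $a\mapsto aq$ on $\K(H_n)$. On elementary tensors $\|(pa_1-a_1)\odots a_n\|_\hh\le\|pa_1-a_1\|\prod_{i\ge2}\|a_i\|\to0$, so by uniform boundedness $u_{p,q}\to u$ in $\E$, whence $\gamma_0(u_{p,q})\to\gamma_0(u)$ in cb-norm. Each $\gamma_0(u_{p,q})=p\,\gamma_0(u)(\cdot)\,q$ has range in the \emph{fixed} finite-dimensional subspace $p\B(H_n,H_1)q\subseteq\K(H_n,H_1)$, so it lies in $\CC(\Kh,\K(H_n,H_1))$. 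Since $\F=\CC(\Kh,\K(H_n,H_1))$ is closed in $\CB(\Kh,\K(H_n,H_1))$ by Proposition~\ref{ccc}(i), I conclude $\gamma_0(u)\in\F$.

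\textbf{Reverse inclusion.}
Given $\Phi\in\F$, Proposition~\ref{prop:ran-gamma0} supplies $u\in\B(H_1)\ehaags\B(H_n)$ with $\gamma_0(u)=\Phi$, and by the overview $u_{p,q}\in\E$ with $\gamma_0(u_{p,q})=p\Phi(\cdot)q$. Because $\haag$ embeds completely isometrically in $\ehaag$, the inclusion $\E\hookrightarrow\B(H_1)\ehaags\B(H_n)$ is completely isometric, so $\gamma_0|_\E$ is isometric and $\gamma_0(\E)$ is complete, hence closed in $\CB$. It therefore suffices to show $\|\Phi-p\Phi(\cdot)q\|_{\cb}\to0$, for then $\Phi$ is a cb-limit of elements $p\Phi(\cdot)q\in\gamma_0(\E)$ and so $\Phi\in\gamma_0(\E)$.

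\textbf{Main obstacle and its resolution.}
The crux is exactly the approximation $\|\Phi-p\Phi(\cdot)q\|_{\cb}\to0$, where complete compactness (not mere compactness) is essential because the estimate must hold uniformly across all amplification levels $m$. Given $\epsilon>0$, complete compactness furnishes a single finite-dimensional $F\subseteq\K(H_n,H_1)$ with $\dist(\Phi^{(m)}(x),M_m(F))<\epsilon/3$ for all $m$ and all contractive $x\in M_m(\Kh)$. Writing $P=I_m\otimes p$, $Q=I_m\otimes q$ and choosing $y\in M_m(F)$ near $\Phi^{(m)}(x)$, a three-term estimate reduces matters to bounding $\|y-PyQ\|$; expanding $y=\sum_s C_s\otimes f_s$ in a fixed basis $(f_s)$ of $F$ gives $\|y-PyQ\|\le\sum_s\|C_s\|\,\|f_s-pf_sq\|$, where $\|C_s\|\le\kappa\|y\|$ uniformly in $m$ since the coordinate functionals of the finite-dimensional $F$ are completely bounded. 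As the finitely many $f_s$ are compact, $\|f_s-pf_sq\|\to0$, so taking $p,q$ large makes the whole expression $<\epsilon$ for every $m$ simultaneously. This uniformity over $m$, supplied precisely by complete compactness, is the heart of the argument; the remaining ingredients (the compression identity, closedness of $\gamma_0(\E)$, and closedness of $\CC$) are comparatively routine.
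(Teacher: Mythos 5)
Your architecture coincides with the paper's: the forward inclusion is finite-rank approximation plus closedness of $\CC$ (Proposition~\ref{ccc}), and the reverse inclusion is the compression $\Phi\mapsto p\Phi(\cdot)q$, with the cb-convergence $\|\Phi-p\Phi(\cdot)q\|_{\cb}\to0$ extracted from complete compactness exactly as in the paper (your hand-made estimate $\|C_s\|\le\kappa\|y\|$ via coordinate functionals of the finite-dimensional space $F$ is the same device as the paper's citation of $\|\Psi_{k,F}-\iota\|_{\cb}\le\ell\,\|\Psi_{k,F}-\iota\|$ from \cite[Corollary~2.2.4]{effros_ruan1}).

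The one genuine gap is the step you defer in your overview: that $u_{p,q}=(pA_1)\odot A_2\odots A_{n-1}\odot(A_nq)$ lies in $\E$, i.e.\ that compressing the outer legs upgrades the two outer junctions from $\ehaag$ to $\haag$. You propose to verify this ``through the slice-map description~\eqref{eq:spronk2}'', but that criterion characterises $\E\haag\F$ as a subspace of $\B(H_1)\haag\B(H_2)$ --- it presupposes membership in the honest Haagerup tensor product of the ambient algebras, which is precisely what is in question: a priori $u_{p,q}$ lives only in $\B(H_1)\ehaags\B(H_n)$, and the $\ehaag$-version~\eqref{eq:spronk} would only return $u_{p,q}\in(p\B(H_1))\ehaag(\cdots)\ehaag(\B(H_n)q)$, with no gain at the junctions. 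The correct argument --- and the place where the paper's proof does its real work --- is that $pA_1$, having range in the finite-dimensional space $pH_1$, is compact as an operator $H_1^{J}\to H_1$ and is therefore the norm limit of its finite truncations $(pA_1)Q_{1,r}$ (dually for $A_nq$); each truncated element lies in the algebraic tensor product $\K(H_1)\odot(\B(H_2)\ehaags\B(H_{n-1}))\odot\K(H_n)$, the truncations converge in $\|\cdot\|_{\eh}$, and hence $u_{p,q}\in\E$. With that substitution your proof is complete and is essentially the paper's.
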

\begin{proof}
  We first establish the inclusion $\gamma_0(\E)\subseteq\F$. If
  $\Phi=\gamma_0(u)$ where $u\in \E$ then, by Proposition
  \ref{prop:ran-gamma0},
  $\Phi$ is the limit in the
  cb norm of maps of the form $\gamma_0(v)$, where
  \[v = a\odot B\odot b\in \K(H_1)\odot (\B(H_2)\ehaags\B(H_{n-1}))\odot \K(H_n),\] $a$ and $b$ have finite rank and $B$
  is a finite matrix with entries in $\B(H_2)\ehaags\B(H_{n-1})$. But each such map has finite rank and hence is
  completely compact. Moreover, every operator in the image of
    $\gamma_0(v)$ has range contained in the range of~$a$, which is
    finite dimensional. It follows that $\Phi$ takes compact
    values;
    it is completely compact by Proposition~\ref{ccc}.

    To see that $\F\subseteq\gamma_0(\E)$, let $\Phi\in\F$. We
    will assume for technical simplicity that $H_1,\dots,H_n$ are
    separable.
    Let $\{p_{k}\}_{k}$ (resp.~$\{q_{k}\}_{k}$) be a sequence
    of projections of finite rank on $H_1$ (resp.~$H_n$) such
    that $p_{k}\rightarrow I$ (resp.~$q_{k}\rightarrow I$)
    in the strong operator topology. Let $\Psi_{k} : \K(H_n,H_1)\rightarrow \K(H_n,H_1)$ be
    the complete contraction given by $\Psi_k(x) = p_{k}xq_{k}$.

    Let $\epsilon > 0$. Since $\Phi$ is completely compact there
    exists a subspace $F\subseteq \K(H_n,H_1)$ of dimension
    $\ell < \infty$ such that $\dist(\Phi^{(m)}(x), M_m(F)) < \epsilon$
    whenever $x\in  M_m(\Kh)$ has norm at most one. Denote the
    restriction of $\Psi_{k}$ to $F$ by $\Psi_{k,F}$ and let $\iota$
    be the inclusion map
    $\iota:F\hookrightarrow\K(H_n,H_1)$. By~\cite[Corollary~2.2.4]{effros_ruan1},
$\|\Psi_{k, F} - \iota\|_{\cb}\leq \ell \|\Psi_{k,F} -
    \iota\|$. Since $F\subseteq \K(H_n,H_1)$, we have that
    $\Psi_{k,F}(x)\rightarrow x$ in norm for each $x\in F$. It follows
    easily that there exists $k_0$ such that
     $\|\Psi_{k,F} - \iota\|_{\cb} < \epsilon$ whenever $k\geq k_0$.

Let $x\in M_m(\Kh)$ be of norm at most one. Then there exists $y\in
M_m(F)$ such that $\|\Phi^{(m)}(x) - y\| < \epsilon$. Note that
\[\|y\|\leq \|\Phi^{(m)}(x) - y\| + \|\Phi^{(m)}(x)\| \leq \epsilon +
\|\Phi\|_{\cb}.\]
Let $\Phi_{k} = \Psi_{k}\circ\Phi$. If $k\geq k_0$ then
\begin{align*}
\|(\Phi_{k}^{(m)}- \Phi^{(m)})(x)\|  & \leq \|\Phi_{k}^{(m)}(x) -
\Psi_{k}^{(m)}(y)\| + \|\Psi_{k}^{(m)}(y) - y\| +  \|y -
  \Phi^{(m)}(x)\|
  \\ & = \|\Psi_{k}^{(m)}(\Phi^{(m)}(x) - y)\| +
  \|(\Psi_{k,F}-\iota)^{(m)}(y)\| + \|y - \Phi^{(m)}(x)\|
\\ & \leq 2\epsilon + \epsilon (\epsilon + \|\Phi\|_{\cb}).
\end{align*}
This shows that $\|\Phi_{k} - \Phi\|_{\cb}\rightarrow 0$.

By Proposition~\ref{ccc}, it only remains to prove that each
$\Phi_{k}$ lies in $\gamma_0(\E)$.  By
Proposition~\ref{prop:ran-gamma0}, there exists an element \[u =
A_1\odot A_2\odot\dots\odot A_{n-1}\odot A_n\in
\B(H_1)\ehaags\B(H_n)\] where $A_1 : H_1^{\infty}\rightarrow H_1$,
$A_i : H_{i}^{\infty}\rightarrow H_i^{\infty}$, $i = 2,\dots,n-1$
and $A_{n} : H_n\rightarrow H_{n}^{\infty}$ are bounded operators,
such that $\Phi = \gamma_0(u)$. Observe that $\Phi_k =
\gamma_0(u_k)$ where $u_k=(p_kA_1)\odot A_2\odots A_{n-1}\odot
(A_{n}q_k)$. It therefore suffices to show that $u_k\in \E$ for
each $k$. Fix $k$ and let $p=p_k$, $q=q_k$. The operator $pA_1 :
H_1^{\infty}\rightarrow H_1$ has finite dimensional range and is
hence compact. For $i=1,\dots,n$, let $Q_{i,r} :
H_i^{\infty}\rightarrow H_i^{\infty}$ be a projection with block
matrix whose first $r$ diagonal entries are equal to the identity
operator while the rest are zero. Then by compactness,
$(pA_1)Q_{1,r}\rightarrow pA_1$ and $Q_{n,r}(A_{n}q)\rightarrow
A_{n}q$ in norm as $r\to\infty$.  Let $B = A_2\odots A_{n-1}$,
$C_r = (pA_1)Q_{1,r}\odot B\odot Q_{n,r}(A_{n}q)$, $r\in\bb{N}$,
and $C = (pA_1)\odot B\odot (A_{n}q)$. Then
\begin{align*}
   \|C_r - C\|_\eh & \leq
  \|C_r - (pA_1)Q_{1,r}\odot B\odot (A_{n}q)\|_\eh
  \\&\hspace{6em} +  \|(pA_1)Q_{1,r}\odot B\odot (A_{n}q) - C\|_\eh
  \\[6pt] & \leq
  \|(pA_1)Q_{1,r}\|\,\|B\|\,\|Q_{n,r}(A_{n}q) - A_{n}q\|
  \\&\hspace{6em} +  \|(pA_1)Q_{1,r} -
  pA_1\|\,\|B\|\,\|A_{n}q\|.
\end{align*}
It follows that $\|C_r - C\|_\eh\rightarrow0$ as $r\to\infty$. Our
claim will follow if we show that $C_r\in \E$. To this end, it
suffices to show that if $A_1 = [a_1,\dots,a_r,0,\dots]$ and
$A_{n} = [b_1,\dots,b_r,0,\dots]^t$, where $a_i, b_i$ are
operators of finite rank, then $A_1\odot B\odot A_{n}\in \E$.  Let
$A_1$ and $A_{n}$ be as stated and let $B' = (Q_{2,r}A_2)\odot A_3
\odots A_{n-2}\odot (A_{n-1}Q_{n,r})$. Then $A_1\odot B \odot
A_{n} = A_1\odot B'\odot A_{n+1}$ belongs to the algebraic tensor
product $\K(H_1)\odot (\B(H_2)\ehaags\B(H_{n-1}))\odot \K(H_n)$
and hence to $\E=\K(H_1)\haag
(\B(H_2)\ehaags\B(H_{n-1}))\haag \K(H_n)$.
\end{proof}

\begin{romanremarks}\label{rk:ff}
{(i)} It follows from Theorem~\ref{muccc} that if $\Phi :
\Kh\rightarrow \K(H_n,H_1)$ is a mapping of finite rank whose
image consists of finite rank operators then there exist finite
rank projections $p$ and $q$ on $H_1$ and $H_n$, respectively, and
$u\in (p\K(H_1))\haag(\B(H_2)\ehaags\B(H_{n-1}))\haag (\K(H_n)q)$
such that $\Phi = \gamma_0(u)$.

\smallskip

\noindent {(ii)}
  The identity
  $\E_1\haag(\E_2\ehaag\E_3) = (\E_1\haag\E_2)\ehaag\E_3$
  does not hold in general; for an
  example, take $\E_1=\E_3=\B(H)$ and $\E_2=\bb{C}$.
\smallskip

\noindent {(iii)}  For every $\Phi\in CC(\Kh, \K(H_n,H_1))$
there
 exist
  $A_1\in \K(H_1^{J_1},H_1)$, $A_i\in \B(H_i^{J_{i}},H_i^{J_{i-1}})$,
   $i = 2,\dots,n-1$ and $A_{n} \in\K(H_n,H_n^{J_{n-1}})$ such that
 $$\Phi(x_1\otimes\dots\otimes x_{n-1}) = A_1(x_1\otimes
 1)A_2\dots(x_{n-1}\otimes 1)A_{n},$$
whenever $x_i\in \K(H_{i+1},H_i)$, $i = 1,\dots,n-1$.
 Indeed, by
 Proposition~\ref{muccc}, $\Phi(x_1\otimes\dots\otimes x_{n-1}) =
 A_1(x_1\otimes 1)A_2\dots(x_{n-1}\otimes 1)A_{n}$ for some $A_1\odot
 A_2\odots A_n\in \K(H_1)\haag(\B(H_2)\ehaags \B(H_{n-1}))\otimes_{\hh}\K(H_n)$.
 Using an idea of Blecher and Smith~\cite[Theorem~3.1]{blecher_smith}, we can choose
$A_1=[t_j]_{j\in J_1}\in
 M_{J_1,1}(\K(H_1))\subseteq \B(H_1^{J_1},H_1)$ and
 $A_n=[s_i]_{i\in J_{n-1}}\in M_{1,J_{n-1}}(\K(H_n))\subseteq \B(H_n,H_n^{J_{n-1}})$
 such that the sums $\sum_{i}s_is_i^*$ and $\sum_{j}t_j^*t_j$ converge
 uniformly.
 Then $A_1$ is the norm
 limit of $A_1^{\F}=[t_j^{\F}]_{i\in J_{1}}$, where $\F$ is a finite subset of $J_{1}$
and $t_j^{\F}=t_j$ if $j\in \F$ and $t_j^{\F}=0$ otherwise. Therefore $A_1\in \K(H_1^{J_1},H)$.

 Similarly, $A_{n} \in\K(H_n,H_n^{J_n-1})$.

\end{romanremarks}

In the case $n = 2$, Theorem~\ref{muccc} reduces to the following
result which was established by Saar (Satz~6 of~\cite{saar}) using
the fact that every completely compact completely bounded map on
$\K(H_1,H_2)$ is a linear combination of completely compact
completely positive maps.

\begin{corollary}\label{comh}
A completely bounded map  $\Phi : \K(H_1,H_2)\rightarrow
\K(H_1,H_2)$ is completely compact if and only if there exist
an index set $I$ and families $\{a_i\}_{i\in I}\subseteq \K(H_1)$ and
$\{b_i\}_{i\in I}\subseteq \K(H_2)$ such that the series
$\sum_{i\in I} b_ib_i^*$ and $\sum_{i\in I} a_i^*a_i$
converge uniformly and
$$\Phi(x) = \sum_{i\in I} b_i x a_i, \ \ \ x\in \K(H_1,H_2).$$
\end{corollary}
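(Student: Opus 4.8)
The plan is to deduce this directly from Theorem~\ref{muccc} in the case $n=2$, the only real work being to translate the abstract statement ``$\gamma_0(\E)=\F$'' into the concrete series representation claimed here. Since the corollary is phrased for maps on $\K(H_1,H_2)$, whereas Theorem~\ref{muccc} with $n=2$ concerns maps on $\Kh=\K(H_2,H_1)$, I would first apply the theorem with the two Hilbert spaces interchanged, i.e.\ with $H_1$ and $H_2$ swapped throughout. After this relabelling the theorem reads: a completely bounded map $\Phi:\K(H_1,H_2)\to\K(H_1,H_2)$ is completely compact if and only if $\Phi=\gamma_0(u)$ for some $u\in\E=\K(H_2)\haag\K(H_1)$ (the middle extended-Haagerup factor $\B(H_2)\ehaags\B(H_{n-1})$ is vacuous when $n=2$). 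Both implications of the corollary then follow once I identify $\gamma_0(u)$ explicitly and characterise the elements $u$ of $\K(H_2)\haag\K(H_1)$.

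For the first point I would unwind the definition of $\gamma_0$ through Theorem~\ref{cs}. Taking $\R_1=\B(H_2)$ and $\R_2=\B(H_1)$ there, an element $u=A_1\odot A_2$ with $A_1=(b_i)_{i\in I}$ a row whose entries $b_i$ lie in $\K(H_2)$ and $A_2=(a_i)_{i\in I}$ a column whose entries $a_i$ lie in $\K(H_1)$ satisfies $\gamma(u)(x)=A_1(x\otimes I)A_2=\sum_{i\in I}b_i x a_i$ for $x\in\B(H_1,H_2)$, and $\gamma_0(u)$ is the restriction of this map to $x\in\K(H_1,H_2)$. This produces exactly the formula $\Phi(x)=\sum_{i\in I}b_i x a_i$ of the statement.

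The remaining ingredient is the standard description of the Haagerup tensor product: an element $u$ lies in $\K(H_2)\haag\K(H_1)$ precisely when it can be written as $u=\sum_{i\in I}b_i\otimes a_i$ with $b_i\in\K(H_2)$, $a_i\in\K(H_1)$, and with the row and column sums $\sum_{i\in I}b_i b_i^*$ and $\sum_{i\in I}a_i^* a_i$ converging in the operator norm; this is precisely the representation used, in the general $n$ setting, in Remark~\ref{rk:ff}(iii), specialised to two factors. Feeding this representation into the formula of the previous paragraph turns the membership $u\in\E$ into the assertion that $\Phi(x)=\sum_i b_i x a_i$ for compact families with $\sum_i b_i b_i^*$ and $\sum_i a_i^* a_i$ norm-convergent; conversely, any such data defines a bona fide element $u=\sum_i b_i\otimes a_i\in\K(H_2)\haag\K(H_1)$ with $\gamma_0(u)=\Phi$. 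Applying Theorem~\ref{muccc} in both directions then yields the stated equivalence.

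Since the substance of the argument is entirely contained in Theorem~\ref{muccc}, I do not expect a genuine obstacle; the only points requiring care are bookkeeping. First, I must keep track of the interchange of $H_1$ and $H_2$, so that the left and right tensor factors, and hence the two convergence conditions $\sum_i b_i b_i^*$ and $\sum_i a_i^* a_i$, are assigned to the correct side. Second, I must invoke the \emph{correct} form of the Haagerup representation, namely that the representing entries may be taken compact and that the row and column sums converge in norm rather than merely weak*; this is exactly what the Blecher--Smith argument recalled in Remark~\ref{rk:ff}(iii) supplies.
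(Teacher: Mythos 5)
Your proposal is correct and follows exactly the route the paper intends: the paper offers no separate argument for this corollary, simply asserting that it is Theorem~\ref{muccc} specialised to $n=2$, with the series form coming from the Blecher--Smith representation of elements of $\K(H_2)\haag\K(H_1)$ as in Remark~\ref{rk:ff}(iii). Your bookkeeping of the $H_1\leftrightarrow H_2$ swap and of which convergence condition ($\sum_i b_ib_i^*$ for the row, $\sum_i a_i^*a_i$ for the column) attaches to which family is the only nontrivial point, and you handle it correctly.
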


We note in passing that Theorem~\ref{muccc} together with a result
of Effros and Kishimoto~\cite{effros_kishimoto} yields the
following completely isometric identification:

\begin{corollary}\label{identification}
$CC(\K(H_2,H_1))^{**}\simeq
(\K(H_1)\haag\K(H_2))^{**}\simeq CB(\B(H_2,H_1))$.
\end{corollary}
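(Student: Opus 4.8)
The plan is to read the result off from Theorem~\ref{muccc} in the case $n=2$ and then transport it through two duality identifications, the last of which is supplied by the cited theorem of Effros and Kishimoto. First I would specialise Theorem~\ref{muccc} and Proposition~\ref{prop:ran-gamma0} to $n=2$: here the middle factor $\B(H_2)\ehaags\B(H_{n-1})$ is absent, the domain $\Kh$ is $\K(H_2,H_1)$, and the codomain $\K(H_n,H_1)$ is again $\K(H_2,H_1)$. Thus $\gamma_0$ restricts to a bijective isometry
\[\gamma_0:\K(H_1)\haag\K(H_2)\longrightarrow \CC(\K(H_2,H_1),\K(H_2,H_1)).\]
This isometry is in fact completely isometric, as is $\gamma$ in Theorem~\ref{cs}, since the underlying Christensen--Sinclair representation respects the matricial norms. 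Because the operator space bidual is a completely isometric functor, applying $(\cdot)^{**}$ to this isomorphism at once yields the first identification
\[\CC(\K(H_2,H_1))^{**}\simeq(\K(H_1)\haag\K(H_2))^{**}.\]

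For the second identification I would compute the bidual of the Haagerup tensor product directly. Using the general formula $(\E\haag\F)^{**}\simeq\E^{**}\shaag\F^{**}$ (the same passage to the normal Haagerup tensor product $\shaag$ that was invoked in the proof of Proposition~\ref{prop:ran-gamma0}, cf.~\cite{effros_ruan}), together with $\K(H_i)^{**}=\B(H_i)$, I obtain
\[(\K(H_1)\haag\K(H_2))^{**}\simeq\B(H_1)\shaag\B(H_2)\]
completely isometrically. It then remains to identify $\B(H_1)\shaag\B(H_2)$ with $\CB(\B(H_2,H_1))$. This is exactly the content of the theorem of Effros and Kishimoto~\cite{effros_kishimoto}: the normal Haagerup tensor product of these two von Neumann algebras represents \emph{all} completely bounded maps on $\B(H_2,H_1)$ via $A_1\odot A_2\mapsto(T\mapsto A_1(T\otimes 1)A_2)$, the non-normal counterpart of the map $\gamma$ of Theorem~\ref{cs}. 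Composing the two displays gives the second identification and completes the argument.

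The step that carries the real weight is the last one. Whereas Theorem~\ref{cs} identifies the \emph{extended} (equivalently weak*) Haagerup tensor product $\B(H_1)\ehaag\B(H_2)$ with only the \emph{normal} completely bounded maps $\CBS(\B(H_2,H_1))$, the corollary requires capturing \emph{every} completely bounded map, and it is precisely the larger normal Haagerup tensor product $\shaag$ that does so. Verifying that this representation is a complete isometry, and that it is compatible with the bidual formula of the previous paragraph, is the substantive input, and is supplied by~\cite{effros_kishimoto}; the two earlier identifications are then formal consequences of the functoriality of the bidual and of Theorem~\ref{muccc}.
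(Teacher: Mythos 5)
Your argument is correct and follows essentially the route the paper intends: the first identification comes from the $n=2$ case of Theorem~\ref{muccc} together with Proposition~\ref{prop:ran-gamma0} (with $\gamma_0$ a complete isometry) by passing to biduals, and the second is exactly the cited Effros--Kishimoto theorem. Your intermediate detour through $\B(H_1)\shaag\B(H_2)$ via the Effros--Ruan bidual formula is a harmless elaboration of the same citation, since the paper itself offers no further detail beyond ``Theorem~\ref{muccc} together with Effros--Kishimoto.''
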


Saar~\cite{saar} constructed an example of a compact map $\Phi :
\K(H)\to\K(H)$ which is not completely compact (see
Section~\ref{sec:cccm} where we give a detailed account of this
construction). We note that a compact completely positive map
$\Phi:\K(H)\to \K(H)$ is automatically completely compact. Indeed,
the Stinespring Theorem implies that there exist an index set $J$
and a row operator $A=[a_i]_{i\in J}\in \B(H^J,H)$ such that
$\Phi(x) = \sum_{i\in J} a_i x a_i^*$, $x\in\K(H)$. The second
dual $\Phi^{**} : \B(H)\rightarrow \B(H)$ of $\Phi$ is a compact
map given by the same formula.
A standard Banach space
argument shows that $\Phi$ takes values in $\K(H)$, and hence
$\Phi^{**}(I)\in \K(H)$.  This means that $AA^*\in\K(H)$ and so $A\in
\K(H^{\infty},H)$ which easily implies that $\Phi$ is completely
compact.

The previous paragraph shows that there exists a compact
completely bounded map on $\K(H)$ which cannot be written as a
linear combination of compact completely positive maps.

We finish this section with a modular version of
Theorem~\ref{muccc}. Given von Neumann algebras $\A_i\subseteq\B(H_i)$, $i = 1,\dots,n$, we let  $\CC_{\!\A_1',\dots,\A_n'\!} (\Kh,
\K(H_n,H_1))$ denote the space of $\A_1',\dots,\A_n'$-modular
completely compact maps from $\Kh$ into $\K(H_n,H_1)$.

\begin{corollary}\label{mcc}
Let $\A_i\subseteq\B(H_i)$, $i = 1,\dots,n$, be von Neumann
algebras. Set $\K'(\A_i) = \K(H_i)\cap \A_i$, for $i=1$ and $i=n$.
Then 
$$\gamma_0\big(\K'(\A_1) \haag(
\A_2\ehaags\A_{n-1})\haag
\K'(\A_n)\big)=\CC_{\!\A_1',\dots,\A_n'\!}(\Kh, \K(H_n,H_1)).$$
\end{corollary}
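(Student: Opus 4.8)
The plan is to derive the modular identity from the non-modular one (Theorem~\ref{muccc}) by intersecting with the space of modular maps, and then to evaluate the resulting intersection on the tensor-product side using Lemma~\ref{lem:ss}. First I would record that, by Proposition~\ref{prop:ran-gamma0}, $\gamma_0$ is a bijective isometry of $\B(H_1)\ehaags\B(H_n)$ onto $\CB(\Kh,\B(H_n,H_1))$, and that by Theorem~\ref{muccc} it sends $\E\defeq\K(H_1)\haag(\B(H_2)\ehaags\B(H_{n-1}))\haag\K(H_n)$ onto $\CC(\Kh,\K(H_n,H_1))$. The key preliminary claim is that, for $u\in\B(H_1)\ehaags\B(H_n)$, the restricted map $\gamma_0(u)$ is $\A_1',\dots,\A_n'$-modular if and only if $u\in\A_1\ehaags\A_n$. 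The forward implication follows from Theorem~\ref{cs}: if $u\in\A_1\ehaags\A_n$ then $\gamma(u)$ is $\A_1',\dots,\A_n'$-modular, so its restriction $\gamma_0(u)$ is too. For the converse I would use that $\gamma(u)=Q\circ\gamma_0(u)^{**}|_{\Bh}$ is the normal extension of $\gamma_0(u)$ (exactly as in the proof of Proposition~\ref{prop:ran-gamma0}); since each $\K(H_{i+1},H_i)$ is weak* dense in $\B(H_{i+1},H_i)$ and $\gamma(u)$ is separately weak* continuous, the module identities satisfied by $\gamma_0(u)$ on the compacts propagate, one variable at a time, to $\gamma(u)$ on all of $\Bh$, and Theorem~\ref{cs} then returns $u\in\A_1\ehaags\A_n$. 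Because $\gamma_0$ is a bijection it preserves intersections, so this claim together with Theorem~\ref{muccc} gives
\[\gamma_0^{-1}\big(\CC_{\!\A_1',\dots,\A_n'\!}(\Kh,\K(H_n,H_1))\big)=\E\cap(\A_1\ehaags\A_n).\]

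It then remains to evaluate the intersection on the right. Writing $M=\B(H_2)\ehaags\B(H_{n-1})$ and $N=\A_2\ehaags\A_{n-1}$, I would fix a completely isometric embedding of the middle block $M$ into some $\B(K)$, under which $N\subseteq M$; by associativity of $\ehaag$ both $\E=\K(H_1)\haag M\haag\K(H_n)$ and $\A_1\ehaags\A_n=\A_1\ehaag N\ehaag\A_n$ then live inside $\B(H_1)\ehaag\B(K)\ehaag\B(H_n)$. The three-fold form of Lemma~\ref{lem:ss}, applied to the three Hilbert spaces $H_1,K,H_n$, gives
\[\E\cap(\A_1\ehaags\A_n)=(\K(H_1)\cap\A_1)\haag(M\cap N)\haag(\K(H_n)\cap\A_n),\]
and since $\ehaag$ is injective, $N$ embeds completely isometrically in $M$, so $M\cap N=N=\A_2\ehaags\A_{n-1}$, while $\K(H_i)\cap\A_i=\K'(\A_i)$ by definition for $i=1,n$. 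Applying $\gamma_0$ to the resulting identity $\E\cap(\A_1\ehaags\A_n)=\K'(\A_1)\haag(\A_2\ehaags\A_{n-1})\haag\K'(\A_n)$ then yields the corollary.

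The hard part will be the preliminary claim on modularity: one must pass between the map $\gamma_0(u)$ on the compacts $\Kh$ and its normal extension $\gamma(u)$ on $\Bh$ and verify that modularity on $\Kh$ alone already forces $u\in\A_1\ehaags\A_n$, which is where the weak* continuity furnished by Proposition~\ref{prop:ran-gamma0} must be invoked with care. A secondary subtlety, flagged by Remark~\ref{rk:ff}(ii), is that $\haag$ and $\ehaag$ do not associate with each other, so Lemma~\ref{lem:ss} has to be applied with the middle factor $M$ (resp.~$N$) kept intact as a single operator space rather than re-bracketed.
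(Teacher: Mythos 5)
Your argument is correct and takes essentially the same route as the paper's own proof: Theorem~\ref{muccc} places the preimage $u=\gamma_0^{-1}(\Phi)$ in $\K(H_1)\haag(\B(H_2)\ehaags\B(H_{n-1}))\haag\K(H_n)$, modularity together with Theorem~\ref{cs} forces $u\in\A_1\ehaags\A_n$, and Lemma~\ref{lem:ss} computes the intersection. The only difference is one of exposition: you spell out the propagation of the module identities from $\gamma_0(u)$ on $\Kh$ to its normal extension $\gamma(u)$ on $\Bh$ via separate weak* continuity, a step the paper compresses into the single phrase ``By Theorem~\ref{cs}, $u\in\A_1\ehaags\A_n$''.
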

\begin{proof} By Theorems~\ref{cs} and~\ref{muccc},
the image of $\K'(\A_1)\haag( \A_2\ehaags\A_{n-1})\haag \K'(\A_n)$
under $\gamma_0$ is contained in $\CC_{\!\A_1',\dots,\A_n'}(\Kh,
\K(H_n,H_1))$. For the converse, fix an element
$\Phi\in\CC_{\!\A_1',\dots,\A_n'\!} (\Kh, \K(H_1,H_n))$. By
Theorem~\ref{muccc}, there exists a unique $u\in
\K(H_1)\haag(\B(H_2)\ehaags\B(H_{n-1}))\haag \K(H_n)$ such that
$\gamma_0(u) = \Phi$. By Theorem~\ref{cs}, $u\in \A_1\ehaags\A_n$.
Lemma~\ref{lem:ss} now shows that $u\in \K'(\A_1)\haag(
\A_2\ehaags\A_{n-1})\haag \K'(\A_n)$.
\end{proof}

\section{Complete boundedness of multipliers}
\label{sec:cbmult}

Our aim in this section is to clarify the relationship between
universal operator multipliers and completely bounded maps, extending
results of~\cite{jtt}.
We begin with an observation which will allow us to deal with the
cases of even and odd numbers of variables in the same manner. We
use the notation established in Section~\ref{sec:prelims}.

\begin{proposition}\label{valinc}
Let  $\As$ be C*-algebras and
$\phi\in M(\As)$. Let $\pi_i$ be a
representation of $\A_i$ on a Hilbert space $H_i$, $i =
1,\dots,n$, and $\pi=\pi_1\otimess \pi_n$. The map $S_{\pi(\phi)}$
takes values in $\K(H_1,H_n)$ if $n$ is odd, and in $\K(H_1^\dd, H_n)$ if $n$ is even.
\end{proposition}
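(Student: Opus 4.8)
The plan is to represent $S_{\pi(\phi)}$ by a single product formula, obtained from Theorem~\ref{jtt}, in which compactness is forced by a Hilbert--Schmidt factor at one end. First I would replace each $\A_i$ by its image $\B_i \defeq \pi_i(\A_i) \subseteq \B(H_i)$ and $\phi$ by $\pi(\phi) \in \B_1\otimess\B_n$. One checks that $\pi(\phi) \in M(\B_1,\dots,\B_n)$: any representations $\sigma_i$ of $\B_i$ give representations $\sigma_i\circ\pi_i$ of $\A_i$, and since $\phi$ is a universal multiplier, $(\sigma_1\otimess\sigma_n)(\pi(\phi)) = (\sigma_1\pi_1\otimess\sigma_n\pi_n)(\phi)$ is a concrete operator multiplier. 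Applying Theorem~\ref{jtt} to $\pi(\phi)$ with the identity representations of the $\B_i$ then produces bounded block operator matrices $A_1,\dots,A_n$ so that, for $n$ even and every elementary tensor $\zeta = \xi_{1,2}\otimes\dots\otimes\xi_{n-1,n} \in \Gamma(\Hs)$,
\[S_{\pi(\phi)}(\zeta) = A_n(\theta(\xi_{n-1,n})\otimes I)\dots(\theta(\xi_{1,2})\otimes I)A_1^{\dd}.\]
Since $S_{\pi(\phi)}$ is linear in $\zeta$ and $\Gamma(\Hs)$ is spanned by such elementary tensors, it suffices to show this product is compact.

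The decisive step is to prove that the leftmost factor $A_n(\theta(\xi_{n-1,n})\otimes I)$ is compact; as all remaining factors are bounded, the whole product is then compact, because a product of a compact operator with a bounded one is compact. Writing $A_n = [a_j]_{j\in J}$ as a bounded row operator into $H_n$, so that $\sum_{j} a_j a_j^* \le \|A_n\|^2 I$, this factor is the row $[a_j\,\theta(\xi_{n-1,n})]_{j\in J}$, and I would show it is the operator-norm limit of its finite truncations $[a_j\,\theta(\xi_{n-1,n})]_{j\in F}$. Each truncation is a finite row of compact operators $a_j\,\theta(\xi_{n-1,n})$ (a bounded operator times a Hilbert--Schmidt one) and hence compact, so it remains to control the tail, whose squared norm is $\|\sum_{j\notin F} a_j\,\theta\theta^*\,a_j^*\|$ with $\theta = \theta(\xi_{n-1,n})$. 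The key point is that $\theta\theta^*$ is a positive trace-class operator: given $\epsilon>0$ I would split $\theta\theta^* = P + R$ with $P$ of finite rank and $\|R\| < \epsilon$. The remainder satisfies $\|\sum_{j\notin F} a_j R a_j^*\| \le \|R\|\,\|A_n\|^2 \le \epsilon\|A_n\|^2$ for every $F$, while for the finite-rank part $\|\sum_{j\notin F} a_j P a_j^*\|$ is dominated by a finite combination of the sums $\sum_{j\notin F}\|a_j\eta\|^2$, each of which tends to $0$ along the net of finite sets $F$ because $\sum_{j}\|a_j\eta\|^2 \le \|A_n\|^2\|\eta\|^2$. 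Hence the tail tends to $0$ in norm and $A_n(\theta(\xi_{n-1,n})\otimes I)$ is compact.

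Combining these, $S_{\pi(\phi)}(\zeta)$ is compact for every elementary, and thus every, $\zeta$, giving the conclusion with values in $\K(H_1^{\dd},H_n)$ when $n$ is even. For $n$ odd, Theorem~\ref{jtt} yields the analogous product $S_{\pi(\phi)}(\zeta) = A_n(\theta(\xi_{n-1,n})\otimes I)\dots A_1$ carrying the same Hilbert--Schmidt factor $\theta(\xi_{n-1,n})$ at its left end, so the identical estimate shows $A_n(\theta(\xi_{n-1,n})\otimes I)$ compact and places $S_{\pi(\phi)}(\zeta)$ in $\K(H_1,H_n)$. I expect the only genuine difficulty to be the tail estimate of the second paragraph: a bounded row of operators need not be compact, so one must use precisely the trace-class nature of $\theta(\xi_{n-1,n})\theta(\xi_{n-1,n})^*$ to upgrade the strong convergence of $\sum_j a_j a_j^*$ to the norm convergence of the truncations; the surrounding reduction and the treatment of the bounded middle factors are routine.
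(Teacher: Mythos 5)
Your reduction to the product formula of Theorem~\ref{jtt} is fine, but the step you yourself call decisive is false: for a bounded row $A_n=[a_j]_{j\in J}$ and a Hilbert--Schmidt $\theta$, the row $[a_j\theta]_{j\in J}$ need \emph{not} be compact. The error is in the tail estimate for the finite-rank part of $\theta\theta^*$: boundedness of the row gives $\sum_j a_ja_j^*\le\|A_n\|^2 I$, i.e.\ $\sum_j\|a_j^*\eta\|^2\le\|A_n\|^2\|\eta\|^2$, whereas the quantity you invoke, $\sum_j\|a_j\eta\|^2$, is controlled by $\sum_j a_j^*a_j$, which is not bounded for a general row. Concretely, take $H_n=\ell_2$ with matrix units $e_{ij}$, put $a_j=e_{j1}$ for $j\in\bb{N}$ and $\theta=e_{11}$: then $\sum_j a_ja_j^*=\sum_j e_{jj}=I$, so $A_n$ is a bounded row and $\theta$ has rank one, yet $a_j\theta=e_{j1}$, so $[a_j\theta]_{j}=A_n$ is a coisometry and hence not compact, and indeed $\sum_j\|a_je_1\|^2=\infty$. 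The same obstruction appears, with the roles of $a_j$ and $a_j^*$ exchanged, if you try to extract compactness from the column $A_1^{\dd}$ at the other end; and the middle factors $\theta(\xi_{i,i+1})\otimes I$ are ampliations of Hilbert--Schmidt operators over an infinite index set, hence also non-compact. So no single factor of the product can be shown compact, and your argument does not establish compactness of $S_{\pi(\phi)}(\zeta)$, which is a genuinely global property of the product.

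The paper's proof avoids this entirely and is worth comparing. For $n$ even nothing needs to be proved: $S_{\pi(\phi)}(\zeta)$ is by its very definition a product of Hilbert--Schmidt operators and is therefore itself Hilbert--Schmidt. For $n$ odd the paper does not use Theorem~\ref{jtt} at all; it approximates $\pi(\phi)$ in the minimal tensor norm by elements of the algebraic tensor product and $\zeta$ in $\|\cdot\|_{2,\wedge}$ by linear combinations of ``thoroughly elementary'' tensors, notes that for an elementary $\phi$ and a thoroughly elementary $\zeta$ the operator $S_\phi(\zeta)$ has rank at most one, and then uses the bilinear continuity estimate~(\ref{rk4.3}), $\|S_\phi(\zeta)\|_{\op}\le\|\phi\|\,\|\zeta\|_{2,\wedge}$, to exhibit $S_{\pi(\phi)}(\zeta)$ as a norm limit of finite-rank operators. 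If you wish to salvage your write-up, you should replace the factor-by-factor compactness claim with an approximation argument of this kind.
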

\begin{proof}
For even $n$, this is immediate as observed in~\cite{jtt}. Let $n$
be odd. Assume without loss of generality that $\A_i\subseteq
\B(H_i)$ and $\pi_i$ is the identity representation. We call an
element $\zeta\in\Gamma(\Hs)$ thoroughly
elementary if
\[\zeta=\eta_{1,2}^{\dd}\otimes\xi_{2,3}\otimes \dots\otimes
\xi_{n-1,n}\] where all
$\eta_{j,j+1}^{\dd}=\eta_j^{\dd}\otimes\eta_{j+1}^{\dd}$ and
$\xi_{j-1,j}=\xi_{j-1}\otimes\xi_{j}$ are elementary tensors. The
linear span of the thoroughly elementary tensors is dense in the
completion of $\Gamma(\Hs)$ in $\|\cdot\|_{2,\wedge}$.  Moreover, the
linear span of the elementary tensors $\phi = \phi_1\otimess\phi_n$
is dense in $\B(H_1)\otimess\B(H_n)$. By~(\ref{rk4.3}) and since
$S_\phi(\zeta)$ is linear in both $\phi$
and $\zeta$, it suffices to show that $S_\phi(\zeta)$ is compact when
$\phi$ is an elementary tensor and $\zeta$ is a thoroughly elementary
tensor. However, in this case $S_\phi(\zeta)$ has rank at most~$1$,
since for every $\xi_1\in H_1$, \[
S_\phi(\zeta)\xi_1=\nph_n\theta(\xi_{n-1,n})\ldots\nph_2^\dd\theta(\eta_{1,2}^\dd)\nph_1\xi_1=\Big(\prod_{j=1}^{n-1} (
\phi_j\xi_j,\eta_j)\Big) \phi_n\xi_n.\qedhere\]
\end{proof}

We now establish some notation. Let $\As$ be
C*-algebras and $\phi\in \A_1\otimess\A_n$.
Assume that $n$ is even and let $\pi_1,\dots,\pi_n$ be
representations of $\As$ on $\Hs$,
respectively. Set $\pi=\pi_1\otimess\pi_n$. Using the natural
identifications, we consider the map $S_{\pi(\phi)} :
\Gamma(\Hs)\rightarrow H_1\otimes H_n$ as a map (denoted
in the same way)
$$S_{\pi(\phi)} : \C_2(H_1^{\dd},H_2)\odots\C_2(H_{n-1}^{\dd},H_n)\rightarrow\C_2(H_1^{\dd},H_n).$$
We let
$$\Phi_{\pi(\phi)} : \C_2(H_{n-1}^{\dd},H_n)\odots\C_2(H_1^{\dd},H_2)\rightarrow\C_2(H_1^{\dd},H_n)$$
be the map given on elementary tensors by
$$\Phi_{\pi(\phi)}(T_{n-1}\otimess T_1) =
S_{\pi(\phi)}(T_1\otimess T_{n-1}).$$ Note that if $\nph\in
M(\As)$ then $\Phi_{\pi(\phi)}$ is bounded when the domain is
equipped with the Haagerup norm and the range with the operator
norm. In this case, $\Phi_{\pi(\phi)}$ has a unique extension
(which will be denoted in the same way)
$$\Phi_{\pi(\phi)} : \left(\K(H_{n-1}^{\dd},H_n)\haags\K(H_1^{\dd},H_2),\|\cdot\|_{\hh}\right)\rightarrow
\left(\K(H_1^{\dd},H_n),\|\cdot\|_{\op}\right).$$ If $n$ is odd then
the map $\Phi_{\pi(\phi)}$ is defined in a similar way. The map
$\Phi_{\pi(\phi)}$ will be used extensively hereafter.

The main result of this section is Theorem \ref{ecb} where we show
the relation between the complete boundedness of the mappings
$\Phi_{\pi(\varphi)}$ and the property of $\nph$ of being a
multiplier. We will need the following lemma.

\begin{lemma}\label{amplia}
Let $\A_i\subseteq\B(H_i)$ be a C*-algebra, $i = 1,\dots,n$,
$\nph\in \A_1\otimess\A_n$ and $\psi =
(\id^{(k)}\otimess\id^{(k)})(\nph)$. Suppose that $n$ is even. If
$T_i\in M_k(\C_2(H_i^{\dd},H_{i+1}))$ for  even $i$ and $T_i\in
M_k(\C_2(H_i,H_{i+1}^{\dd}))$ for odd $i$ then
$$\Phi_{\nph}^{(k)}(T_{n-1}\odot\dots\odot T_1) =
\Phi_{\psi}(T_{n-1}\otimess T_1),$$ where we identify
$M_k(\C_2(H_i^{\dd},H_{i+1}))$ with $\C_2((H_i^{\dd})^
{(k)},H_{i+1}^{(k)})$ for even $i$, and
$M_k(\C_2(H_i,H_{i+1}^{\dd}))$ with
$\C_2(H_i^{(k)},(H_{i+1}^{\dd})^{(k)})$ for odd $i$. A similar
statement holds for odd $n$.
\end{lemma}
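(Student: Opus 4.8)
The plan is to reduce to the case of an elementary tensor $\nph = A_1\otimess A_n$ and then recognise both sides of the asserted identity as the same block-operator product. Both $\Phi_{\nph}^{(k)}$ and $\Phi_{\psi}$ depend linearly on $\nph$ (through $S_{\nph}$, which is linear in $\nph$) and multilinearly on the arguments $T_{n-1},\dots,T_1$; since the domains appearing in the statement are algebraic tensor products, no completion is involved and it suffices to verify the identity for elementary $\nph$ and for $T_i$ whose matrix entries are rank-one Hilbert--Schmidt operators.

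First I would record the explicit product form of $\Phi_{\nph}$. For $\nph = A_1\otimess A_n$, the displayed formula for $S_{\nph}$ on elementary tensors together with the defining relation $\Phi_{\nph}(T_{n-1}\otimess T_1) = S_{\nph}(T_1\otimess T_{n-1})$ gives
$$\Phi_{\nph}(T_{n-1},\dots,T_1) = A_n T_{n-1} A_{n-1}^{\dd} T_{n-2} A_{n-2}\cdots A_2 T_1 A_1^{\dd},$$
where the factors $A_i$ with $i$ even appear undaggered and those with $i$ odd appear as the Banach adjoint $A_i^{\dd}$; by linearity in each $T_i$ this product formula persists for arbitrary $T_i$ in the relevant Hilbert--Schmidt classes.

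Next I would amplify. Writing $A_i^{(k)} = \id^{(k)}(A_i)$ for the block-diagonal inflation acting on $H_i^{(k)}$, we have $\psi = A_1^{(k)}\otimess A_n^{(k)}$ and $(A_i^{(k)})^{\dd} = (A_i^{\dd})^{(k)}$, so the same formula yields
$$\Phi_{\psi}(T_{n-1},\dots,T_1) = A_n^{(k)} T_{n-1} (A_{n-1}^{(k)})^{\dd} T_{n-2}\cdots A_2^{(k)} T_1 (A_1^{(k)})^{\dd},$$
where now each $T_i$ is read as a single Hilbert--Schmidt operator between the amplified spaces via the identifications stated in the lemma. The point is then purely a block-matrix computation: since each $A_i^{(k)}$ is block-scalar it does not move the $k\times k$ indices, whereas each $T_i = ((T_i)_{s,t})$ carries an index transition. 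Multiplying out, the $(i,j)$ block of $\Phi_{\psi}(T_{n-1},\dots,T_1)$ is
$$\sum_{i_2,\dots,i_{n-1}} A_n (T_{n-1})_{i,i_2} A_{n-1}^{\dd} (T_{n-2})_{i_2,i_3}\cdots (T_1)_{i_{n-1},j} A_1^{\dd} = \sum_{i_2,\dots,i_{n-1}} \Phi_{\nph}\big((T_{n-1})_{i,i_2},\dots,(T_1)_{i_{n-1},j}\big),$$
which is exactly the $(i,j)$ entry of $\Phi_{\nph}^{(k)}(T_{n-1}\odots T_1)$ by the definition of the amplification of a multilinear map. This proves the identity for elementary $\nph$, and the general case follows by linearity.

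The main delicacy is bookkeeping rather than a genuine obstacle: one must check that the natural identifications $M_k(\C_2(H_i^{\dd},H_{i+1}))\cong\C_2((H_i^{\dd})^{(k)},H_{i+1}^{(k)})$ and the odd-$i$ analogue $M_k(\C_2(H_i,H_{i+1}^{\dd}))\cong\C_2(H_i^{(k)},(H_{i+1}^{\dd})^{(k)})$ intertwine the block-scalar inflation $A_i\mapsto A_i^{(k)}$ together with its Banach adjoint, on the one hand, and the index summation defining both $\Phi^{(k)}$ and the block product, on the other. I would verify this compatibility on rank-one operators $\theta(\xi\otimes\eta)$, where it reduces to a direct computation with inner products and the defining relation of $\theta$, and where the one thing requiring care is that the alternating daggers in the product match the alternating $\C_2(H_i^{\dd},H_{i+1})$ / $\C_2(H_i,H_{i+1}^{\dd})$ structure of the arguments. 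The odd-$n$ case is handled identically, using the odd-$n$ formula for $S_{\nph}$.
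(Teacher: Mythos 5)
Your computation for elementary tensors is correct and coincides with the first half of the paper's own argument: for $\nph = A_1\otimess A_n$ one has $\Phi_\nph(T_{n-1},\dots,T_1) = A_nT_{n-1}A_{n-1}^{\dd}T_{n-2}\cdots A_2T_1A_1^{\dd}$, the ampliations $A_i^{(k)}$ are block-scalar, and comparing $(i,j)$ blocks of the amplified product with the definition of $\Phi_\nph^{(k)}$ gives the identity. The genuine gap is in your reduction step. You assert that ``no completion is involved and it suffices to verify the identity for elementary $\nph$,'' with the general case following ``by linearity.'' This is false for the variable $\nph$: the hypothesis is $\nph\in\A_1\otimess\A_n$, which throughout the paper denotes the \emph{minimal} tensor product, i.e.\ the norm completion of $\A_1\odots\A_n$. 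Linearity only yields the identity on the algebraic tensor product; for general $\nph$ you must take a sequence $\{\nph_m\}\subseteq\A_1\odots\A_n$ with $\|\nph_m-\nph\|\to 0$ and pass to the limit on both sides of the identity. This is exactly what the second half of the paper's proof supplies, using the estimate (\ref{rk4.3}), $\|S_\phi(\zeta)\|_{\op}\le\|\phi\|\,\|\zeta\|_{2,\wedge}$: it gives $\Phi_{\nph_m}(T)\to\Phi_\nph(T)$ in operator norm for each $T$, hence $\Phi_{\nph_m}^{(k)}(S)\to\Phi_{\nph}^{(k)}(S)$ entrywise, while $\psi_m\to\psi$ in the minimal norm (the ampliation being isometric) gives $\Phi_{\psi_m}(S)\to\Phi_\psi(S)$. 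Without some such continuity argument the proof does not reach a general universal element of the minimal tensor product.

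A smaller instance of the same oversight: each factor $\C_2(H_i^{\dd},H_{i+1})$ is itself a completion of the finite-rank operators, so reducing to $T_i$ whose matrix entries are rank-one also requires density together with norm-continuity of both sides in the entries of the $T_i$ (again a consequence of (\ref{rk4.3})), not multilinearity alone. Both repairs are routine, but they constitute precisely the portion of the argument you declared unnecessary.
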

\begin{proof} To simplify notation, we give the proof for $n=2$;
the general proof is similar. If $\phi=a_1\otimes a_2$
is an elementary tensor then $\Phi_\phi(T)=a_2Ta_1^{\dd}$ and it 
is easily checked that the statement holds. By
linearity, it holds for each $\phi\in \A_1\odot\A_2$. Assume that
$\phi\in \A_1\otimes\A_2$ is arbitrary. Let $\{\phi_m\}\subseteq
\A_1\odot\A_2$ be a sequence converging in the operator norm to
$\phi$ and $\psi_m = (\id^{(k)}\otimes \id^{(k)})(\phi_m)$. 
By~(\ref{rk4.3}), $\Phi_{\nph_m}(
T)\rightarrow\Phi_{\nph}(T)$ in the operator norm, for all
$T\in\C_2(H_1^{\dd},H_2)$. This implies that if $S \in
M_k(\C_2(H_1^{\dd},H_2))$, then $\Phi_{\nph_m}^{(k)}(S)\rightarrow
\Phi_{\nph}^{(k)}(S)$ in the operator norm of
$M_k(\C_2(H_1^{\dd},H_2))$. Since $\psi_m\rightarrow\psi$ in the
operator norm, we conclude that $\Phi_{\psi_m}(
S)\rightarrow\Phi_{\psi}(S)$ in the operator norm of
$\C_2((H_1^{\dd})^{(k)},H_2^{(k)})$. It follows that
$\Phi_{\psi}(S) = \Phi_{\nph}^{(k)}(S)$.
\end{proof}

\begin{theorem}\label{ecb}
Let $\As$ be C*-algebras and $\phi\in \A_1\otimess\A_n$. The following are equivalent:

(i) \ $\nph\in M(\As)$;

(ii) if $\pi_i$ is a representation of $\A_i$, $i = 1,\dots,n$,
and $\pi = \pi_1\otimess\pi_n$ then the map
$\Phi_{\pi(\phi)}$ is completely bounded;

(iii) there exist faithful representations $\pi_i$ of $\A_i$,
$i = 1,\dots,n$, such that if $\pi =
\pi_1\otimess\pi_n$ then the map $\Phi_{\pi(\phi)}$ is
completely bounded.

Moreover, if the above conditions hold and $\pi$ is as in (iii)
then $\|\phi\|_{\mm} = \|\Phi_{\pi(\phi)}\|_{\cb}$.
\end{theorem}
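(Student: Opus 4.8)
The plan is to prove the equivalence of (i), (ii), (iii) together with the norm identity by establishing the chain of implications (i)$\Rightarrow$(ii)$\Rightarrow$(iii)$\Rightarrow$(i), while simultaneously tracking the norms so that the final equality $\|\phi\|_{\mm}=\|\Phi_{\pi(\phi)}\|_{\cb}$ drops out. I will work with even $n$ throughout and remark that the odd case is analogous (the constructions $S_\phi$ and $\Phi_{\pi(\phi)}$ were set up in parallel for both parities in the Preliminaries). The conceptual bridge in every step is Lemma~\ref{amplia}, which identifies the matricial amplification $\Phi_{\nph}^{(k)}$ with a single map $\Phi_\psi$ for $\psi=(\id^{(k)}\otimess\id^{(k)})(\nph)$; this lets me convert statements about complete boundedness of $\Phi_{\pi(\phi)}$ into statements about ordinary boundedness of the maps $S_{\pi^{(k)}(\phi)}$, i.e.\ into the concrete multiplier condition.

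\smallskip

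First I would prove (i)$\Rightarrow$(ii). Fix representations $\pi_i$ and set $\pi=\pi_1\otimess\pi_n$. Since $\nph\in M(\As)$, it is in particular a $\pi_1^{(k)},\dots,\pi_n^{(k)}$-multiplier for every $k$, with multiplier norm at most $\|\phi\|_{\mm}$ (the amplifications $\pi_i^{(k)}$ are again representations of $\A_i$, and the supremum defining $\|\phi\|_{\mm}$ is over all representations). By definition of the concrete multiplier norm this says $\|S_{\pi^{(k)}(\phi)}(\zeta)\|_{\op}\leq\|\phi\|_{\mm}\,\|\zeta\|_{\hh}$ for all $\zeta$, so $\Phi_{\pi^{(k)}(\phi)}$ is bounded on the Haagerup tensor product by $\|\phi\|_{\mm}$. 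Now I invoke Lemma~\ref{amplia}: under the stated Hilbert--Schmidt identifications, $\Phi_{\pi(\phi)}^{(k)}$ coincides with $\Phi_{\pi^{(k)}(\phi)}=\Phi_{\psi}$ where $\psi=(\id^{(k)}\otimess\id^{(k)})(\pi(\phi))$. Hence $\|\Phi_{\pi(\phi)}^{(k)}\|\leq\|\phi\|_{\mm}$ uniformly in $k$, giving complete boundedness with $\|\Phi_{\pi(\phi)}\|_{\cb}\leq\|\phi\|_{\mm}$. The implication (ii)$\Rightarrow$(iii) is trivial, since any C*-algebra admits a faithful representation.

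\smallskip

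The substantive implication is (iii)$\Rightarrow$(i), and this is the step I expect to be the main obstacle. Here I start from faithful $\pi_i$ with $\Phi_{\pi(\phi)}$ completely bounded, and I must recover that $\nph$ is a multiplier for \emph{every} family of representations, with the right norm control. The strategy is to run the previous computation in reverse: complete boundedness of $\Phi_{\pi(\phi)}$ gives, via Lemma~\ref{amplia}, that $\|S_{\pi^{(k)}(\phi)}(\zeta)\|_{\op}\leq\|\Phi_{\pi(\phi)}\|_{\cb}\|\zeta\|_{\hh}$ for all $k$, so $\nph$ is a $\pi_1^{(k)},\dots,\pi_n^{(k)}$-multiplier with uniformly bounded concrete norm. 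The difficulty is promoting this from the single faithful family $\{\pi_i\}$ and its amplifications to arbitrary representations. Since the $\pi_i$ are faithful, every representation $\rho_i$ of $\A_i$ is weakly contained in, and for von Neumann-algebraic purposes quasi-equivalent to a subrepresentation of a suitable amplification of, $\pi_i$; the clean route is to pass to the universal enveloping von Neumann algebra and use that concrete multiplier norms are preserved under the operations of restriction to invariant subspaces, amplification, and taking direct sums, together with the semi-weak continuity built into the characterisation of Theorem~\ref{jtt}. Concretely, I would argue that the bound $\|S_{\rho(\phi)}(\zeta)\|_{\op}\leq\|\Phi_{\pi(\phi)}\|_{\cb}\|\zeta\|_{\hh}$ persists under these reductions because $S_{(\cdot)}(\zeta)$ behaves functorially and the faithful family $\{\pi_i\}$, after amplification, captures the GNS data of any other representation. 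This yields $\nph\in M(\As)$ with $\|\phi\|_{\mm}\leq\|\Phi_{\pi(\phi)}\|_{\cb}$. Combining the two norm inequalities gives the final equality $\|\phi\|_{\mm}=\|\Phi_{\pi(\phi)}\|_{\cb}$ for the faithful $\pi$ of (iii). The main technical care will be in justifying that the supremum over \emph{all} representations in the definition of $\|\phi\|_{\mm}$ is already attained, up to the constant $\|\Phi_{\pi(\phi)}\|_{\cb}$, by the amplifications of a single faithful representation — this is exactly where the representation-theoretic content lives, and I expect it to rely on the structural results recalled from~\cite{jtt} (Theorem~\ref{jtt}) rather than on a bare-hands estimate.
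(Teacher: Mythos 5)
Your treatment of (i)$\Rightarrow$(ii) and (ii)$\Rightarrow$(iii) matches the paper exactly: amplify, invoke Lemma~\ref{amplia} to identify $\Phi_{\pi(\phi)}^{(k)}$ with $\Phi_{\pi^{(k)}(\phi)}$, and read off the uniform bound $\|\Phi_{\pi(\phi)}^{(k)}\|\leq\|\phi\|_{\mm}$. The overall architecture of (iii)$\Rightarrow$(i) --- control amplifications of the given faithful representation first, then promote to arbitrary representations by representation theory --- is also the paper's.

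However, the mechanism you propose for the promotion step contains a genuine error. You assert that every representation $\rho_i$ of $\A_i$ is ``quasi-equivalent to a subrepresentation of a suitable amplification of $\pi_i$'' because $\pi_i$ is faithful. This is false in general: take $\A=C[0,1]$ with $\pi$ the multiplication representation on $L^2[0,1]$; a point-evaluation representation is weakly contained in $\pi$ but is not quasi-equivalent to (nor contained in) any amplification $\pi^{(\lambda)}$, since $\pi^{(\lambda)}$ has no joint eigenvectors. Weak containment alone does not obviously transport the concrete multiplier norm. The paper's actual tool is Voiculescu-style \emph{approximate subordination}: by \cite{voiculescu} and \cite[Theorem 5.1]{hadwin}, every $\rho_i$ is approximately subordinate to $\id^{(\lambda)}$ for a suitable cardinal $\lambda$, and then \cite[Theorem 5.1]{jtt} gives precisely the monotonicity $\|\phi\|_{\rho_1,\dots,\rho_n}\leq\|\phi\|_{\id^{(\lambda)},\dots,\id^{(\lambda)}}$ that your argument needs. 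Without citing (or reproving) that monotonicity under approximate subordination, the step does not close. A second, smaller gap: Lemma~\ref{amplia} only handles finite ampliations $k\in\bb{N}$, whereas the subordination argument forces you to bound $\|\phi\|_{\id^{(\lambda)},\dots,\id^{(\lambda)}}$ for an \emph{infinite} cardinal $\lambda$. The paper bridges this by approximating an arbitrary $\zeta\in\Gamma(\tilde H_1,\dots,\tilde H_n)$ in $\|\cdot\|_{2,\wedge}$ by an element supported on finitely many coordinates (using the estimate (\ref{rk4.3}) to control the error), and your sketch omits this reduction. Both points are repairable, but as written the (iii)$\Rightarrow$(i) argument would fail.
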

\begin{proof} For technical simplicity we take $n=3$.

(i)$\Rightarrow$(ii) Let $\phi$ $\in$ $M(\A_1,\A_2,\A_3)$ and $\pi_i
: \A_i\rightarrow\B(H_i)$ be a representation, $i = 1,2,3$. Then
$\pi(\nph)\in M(\pi_1(\A_1),\pi(\A_2),\pi_3(\A_3))$; thus, it
suffices to assume that $\A_i\subseteq \B(H_i)$ are concrete
C*-algebras and that $\pi_i$ is the identity representation, $i =
1,2,3$.

Fix $k\in\bb{N}$ and let $\psi = (\id^{(k)}\otimes\id^{(k)}\otimes
\id^{(k)})(\phi)$. Since $\nph\in M(\A_1,\A_2,\A_3)$, the map
$$\Phi_{\psi} : \K(H_{2}^{\dd (k)},H_3^{(k)})\odot\K(H_1^{(k)},H_2^{\dd (k)})\rightarrow \K(H_1^{
(k)},H_3^{(k)})$$ is bounded with norm not exceeding
$\|\nph\|_{\mm}$. By Lemma~\ref{amplia},
$\|\Phi_{\phi}^{(k)}\|\leq \|\nph\|_{\mm}$. Since this inequality
holds for every $k\in\bb{N}$, the map $\Phi_{\phi}$ is completely
bounded.

\medskip

(ii)$\Rightarrow$(iii) is trivial.

\medskip

(iii)$\Rightarrow$(i) We may assume that $\A_i\subseteq \B(H_i)$ and
that $\pi_i$ is the identity representation, $i = 1,2,3$. Let
$\lambda$ be a cardinal number,
$\rho_i = \id^{(\lambda)}$ be the ampliation of the identity
representation of multiplicity $\lambda$, $\psi =
(\rho_1\otimes\rho_2\otimes\rho_3)(\phi)$, and $\tilde{H}_i =
H_i^{\lambda}$, $i = 1,2,3$. Fix $\epsilon > 0$ and $\zeta\in
\Gamma(\tilde{H}_1,\tilde{H}_2,\tilde{H}_3)$. Let
$$\tilde{T} = \tilde{T}_{2}\odot \tilde{T}_1\in \C_2(\tilde{H}_{2}^{\dd},\tilde{H}_3)\odot
\C_2(\tilde{H}_{1},\tilde{H}_2^{\dd})$$ be the element canonically
corresponding to $\zeta$. Then there exist $k\in\bb{N}$ and 
canonical projections $P_i$ from $\tilde{H}_i$ onto the direct sum of $k$
copies of $H_i$ such that
if $T_0 = (P_{3}\tilde{T}_{2}(P_{2}^{\dd}\otimes I))\odot
((P_{2}^{\dd}\otimes I)\tilde{T}_{1}P_1)$ and if $\zeta_0$ is the element of
$\Gamma(H_1^{(k)},H_2^{(k)},H_3^{(k)})$ corresponding to $T_0$ then
$\|\zeta - \zeta_0\|_{2,\wedge}\leq \epsilon$.

Set $\psi_0 = (\id^{(k)}\otimes \id^{(k)}\otimes\id^{(k)})(\nph)$.
Arguing as in Lemma~\ref{amplia}, we see that
$\|\Phi_{\psi_0}(T_0)\|_{\op}=\|\Phi_{\psi}(T_0)\|_{\op}$.
Using~(\ref{rk4.3}) and Lemma~\ref{amplia} we obtain
\begin{align*}
\|S_{\psi}(\zeta)\|_{\op}
& \leq \|S_{\psi}(\zeta - \zeta_0)\|_{\op} +
\|S_{\psi}(\zeta_0)\|_{\op} \leq \|S_{\psi}(\zeta - \zeta_0)\|_{\op} +
\|\Phi_{\psi}(T_0)\|_{\op}\\
& \leq \|\psi\|\|\zeta - \zeta_0\|_{2,\wedge} +
\|\Phi_{\psi_0}(T_0)\|_{\op} \leq \epsilon \|\nph\| +
\|\Phi_{\nph}^{(k)}(T_0)\|_{\op}\\
& \leq \epsilon \|\nph\| +
\|\Phi_{\nph}\|_{\cb}\|T_0\|_{\hh}\\
& \leq \epsilon \|\nph\| +
\|\Phi_{\nph}\|_{\cb}\|P_{3}\tilde{T}_{2}(P_{2}^{\dd}\otimes
I)\|_{\op}\|(P_{2}^{\dd}\otimes I)\tilde{T}_{1}P_1\|_{\op}\\
& \leq \epsilon \|\nph\| +
\|\Phi_{\nph}\|_{\cb}\|\tilde{T}_{2}\|_{\op}\|\tilde{T}_{1}\|_{\op}.
\end{align*}
It follows that
$\|\phi\|_{\id^{(\lambda)},\id^{(\lambda)},\id^{(\lambda)}}\leq
\|\Phi_{\phi}\|_{\cb}$.

Now let $\rho_1,\rho_2,\rho_3$ be arbitrary representations of
$\A_1,\A_2,\A_3$, respectively. Then there exists a cardinal
number $\lambda$ such that each of the representations $\rho_i$ is
approximately subordinate to the representation $\id^{(\lambda)}$
(see \cite{voiculescu} for the definition of approximate
subordination and \cite[Theorem 5.1]{hadwin}).  By Theorem~5.1 of~\cite{jtt},
$\|\phi\|_{\rho_1,\rho_2,\rho_3}\leq
\|\phi\|_{\id^{(\lambda)},\id^{(\lambda)},\id^{(\lambda)}}$; now
the previous paragraph implies that
$\|\phi\|_{\rho_1,\rho_2,\rho_3}\leq \|\Phi_{\phi}\|_{\cb}$. It
follows that $\phi\in M(\A_1,\A_2,\A_3)$ and $\|\phi\|_{\mm} \leq
\|\Phi_{\nph}\|_{\cb}$. As the reversed inequality was already
established, we conclude that $\|\phi\|_{\mm} =
\|\Phi_{\nph}\|_{\cb}$.
\end{proof}

\section{The symbol of a universal multiplier}\label{ssy}

Our aim in this section is to generalise the natural
correspondence between a function $\phi\in
\ell^\infty\ehaag\ell^\infty$ and the Schur multiplier $S_\phi$ on
$\B(\ell^2(\bN))$ given by $S_{\nph}((a_{ij})) =
(\nph(i,j)a_{ij})$. To each universal operator multiplier we will
associate an element of an extended Haagerup tensor product which
we call its symbol. This will be used in the subsequent sections
to identify certain classes of operator multipliers.

Recall that if $\A$ is a C*-algebra, its opposite C*-algebra
$\A^o$ is defined to be the C*-algebra whose underlying set, 
norm, involution and
linear structure coincide with those of $\A$ and whose
multiplication $\cdot$ is given by $a\cdot b = ba$. If $a\in\A$
we denote by $a^o$ the element of $\A^o$ corresponding to $a$.
 If $\pi : \A\rightarrow \B(H)$ is a representation
of $\A$ then the map $\pi^{\dd} : a^o\rightarrow \pi(a)^{\dd}$
from $\A^o$ into $\B(H^{\dd})$ is a representation of $\A^o$.
Clearly, $\pi$ is faithful if and only if $\pi^{\dd}$ is faithful.
If $\pi_i : \A_i\rightarrow\B(H_i)$ are faithful representations,
$i = 1,\dots,n$ ($n$ even), then by~\cite[Lemma~5.4]{effros_ruan}
there exists a complete isometry
$\pi_n\ehaag\pi_{n-1}^{\dd}\ehaags\pi_1^{\dd}$ from
$\A_n\ehaag\A_{n-1}^o\ehaags\A_1^o$ into
$\B(H_n)\ehaag\B(H_{n-1}^{\dd})\ehaags\B(H_1^{\dd})$ which sends
$a_n\otimes a_{n-1}^o\otimess a_1^o$ to $\pi_n(a_n)\otimes
\pi_{n-1}(a_{n-1})^{\dd}\otimess \pi_1(a_1)^{\dd}.$

Henceforth, we will consistently write $\pi=\pi_1\otimess\pi_n$ and
\[\pi'=\begin{cases}
  \pi_n\ehaag\pi_{n-1}^\dd\ehaags \pi_1^\dd\quad&\text{if $n$ is
    even,}\\
  \pi_n\ehaag\pi_{n-1}^\dd\ehaags \pi_1 &\text{if $n$ is
    odd.}
\end{cases}
\]

Let $n\in\bb{N}$, $\As$ be C*-algebras, $\pi_i$ be a
representation of $\A_i$, $i = 1,\dots,n$, and $\phi$ $\in$
$M(\As)$. Assume that $n$ is even. By Theorem~\ref{ecb}, the map
$$\Phi_{\pi(\phi)} : \K(H_{n-1}^{\dd},H_n)\haags\K(H_1^{\dd},H_2)\rightarrow\K(H_1^{\dd},H_n)$$
is completely bounded. By Proposition~\ref{prop:ran-gamma0}, there
exists a unique element $u^{\pi}_{\phi} \in
\B(H_n)\ehaag\B(H_{n-1}^{\dd})\ehaags\B(H_1^{\dd})$ such that
$\gamma_0(u^\pi_\phi)=\Phi_{\pi(\phi)}$. For example, if each $\A_i$ is a
concrete C*-algebra and $a_i\in \A_i$, $i = 1,\dots,n$, then
\[u^{\id}_{a_1\otimes a_2\otimess a_{n-1}\otimes a_n}=a_n\otimes
a_{n-1}^\dd\otimess a_2\otimes a_1^\dd.\] If $n$ is odd then we
define $u^{\pi}_{\phi}$ similarly.

The main result of this section is the following.

\begin{theorem}\label{symb}
Let $\As$ be C*-algebras and $\phi\in M(\As)$. There exists a unique
element
\[u_{\phi}\in
\begin{cases}
\A_n\ehaag\A_{n-1}^{o}\ehaags\A_2\ehaag\A_1^{o}\;&\text{if $n$ is even,}\\
\A_n\ehaag\A_{n-1}^{o}\ehaags\A_2^o\ehaag\A_1&\text{if $n$ is odd}
\end{cases}\]
with the property that if $\pi_i$ is a representation of $\A_i$ for
$i = 1,\dots,n$ then
\begin{equation}\label{usy}
u_{\phi}^{\pi} = \pi'(u_{\phi}).
\end{equation}
The map $\nph\rightarrow u_{\nph}$ is linear and if $a_i\in\A_i$, $i
= 1,\dots,n$ then
\[u_{a_1\otimess a_n} =
\begin{cases}
a_n\otimes a_{n-1}^o\otimess a_1^o\;&\text{if $n$ is even,}\\
a_n\otimes a_{n-1}^o\otimess a_1&\text{if $n$ is odd.}
\end{cases}\]
Moreover, $\|\nph\|_{\mm} = \|u_{\nph}\|_{\eh}$.
\end{theorem}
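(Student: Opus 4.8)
The plan is to construct $u_\phi$ as a weak* limit of the symbols of the finite approximants furnished by Theorem~\ref{jtt}, and to pin it down by the intertwining relation~\eqref{usy} for a single faithful tuple of representations. I treat the even case throughout; the odd case is identical after the obvious bookkeeping with the opposite algebras, which is already absorbed into the definition of $\pi'$.

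Uniqueness is the easy half and I would dispose of it first. Fix faithful representations $\pi_i$ of $\A_i$; then $\pi'$ is a complete isometry, hence injective, so~\eqref{usy} forces $u_\phi=(\pi')^{-1}(u^\pi_\phi)$. Thus at most one element can satisfy~\eqref{usy} for this $\pi$. For existence, invoke Theorem~\ref{jtt} to obtain a net $\phi_\nu=A_1^\nu\odots A_n^\nu$ of finite elementary tensors with entries in the $\A_i$, converging semi-weakly to $\phi$ with $\sup_\nu\|\phi_\nu\|_\mm<\infty$. For each $\nu$ set $u_{\phi_\nu}=A_n^\nu\odot(A_{n-1}^\nu)^o\odots(A_1^\nu)^o$, a genuine element of $\A_n\odot\A_{n-1}^o\odots\A_1^o$. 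A direct matrix computation using the formula for $\Phi_{\pi(\phi_\nu)}$ on elementary tensors together with the description of $\gamma$ in Theorem~\ref{cs} gives $\pi'(u_{\phi_\nu})=u^\pi_{\phi_\nu}$, whence, using that $\pi'$ and $\gamma_0$ are isometries and Theorem~\ref{ecb},
\[\|u_{\phi_\nu}\|_\eh=\|\pi'(u_{\phi_\nu})\|_\eh=\|\gamma_0(u^\pi_{\phi_\nu})\|_\cb=\|\Phi_{\pi(\phi_\nu)}\|_\cb=\|\phi_\nu\|_\mm.\]
So $\{u_{\phi_\nu}\}$ is $\eh$-bounded, and since by~\cite{effros_ruan} the space $\A_n\ehaag\A_{n-1}^o\ehaags\A_1^o$ is a dual operator space, the net has a weak* cluster point $u_\phi$ there with $\|u_\phi\|_\eh\le\sup_\nu\|\phi_\nu\|_\mm$. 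Crucially, the entries of the representing matrices of $u_\phi$ remain in the $\A_i$ even though their diagonal row/column sums escape only into the enveloping von Neumann algebras; this is precisely what membership in the \emph{C*-algebraic} extended Haagerup tensor product records.

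It remains to verify~\eqref{usy} and then to read off linearity, the elementary formula, and the norm identity. Fix faithful $\pi$ and compact $T_i$, together with vectors $\xi,\eta$. By the very construction of the extended Haagerup tensor product the scalar functional $u\mapsto\langle\gamma_0(\pi'(u))(T_{n-1}\otimess T_1)\,\xi,\eta\rangle$ is weak*-continuous on $\A_n\ehaag\A_{n-1}^o\ehaags\A_1^o$; passing to the limit along the cluster point and using the final assertion $S_{\phi_\nu}(\zeta)\to S_\phi(\zeta)$ weakly of Theorem~\ref{jtt} yields
\[\gamma_0(\pi'(u_\phi))(T_{n-1}\otimess T_1)=\lim_\nu\Phi_{\pi(\phi_\nu)}(T_{n-1}\otimess T_1)=\Phi_{\pi(\phi)}(T_{n-1}\otimess T_1)=\gamma_0(u^\pi_\phi)(T_{n-1}\otimess T_1).\]
Since $\gamma_0$ is injective (Proposition~\ref{prop:ran-gamma0}) this forces $\pi'(u_\phi)=u^\pi_\phi$, so~\eqref{usy} holds for faithful $\pi$; as $\pi'$ is then a complete isometry, $u_\phi=(\pi')^{-1}(u^\pi_\phi)$ is independent of the chosen net, giving well-definedness and uniqueness at once. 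The extension of~\eqref{usy} to arbitrary $\pi$ follows by functoriality of $\ehaag$ under the factorisation $\pi_i=\pi_i|^{\pi_i(\A_i)}$ through the faithfully represented image, via~\eqref{eq:eh-adjoints}. Linearity of $\phi\mapsto u_\phi$ and the stated value on $a_1\otimess a_n$ are then immediate from the construction and uniqueness. Finally, for faithful $\pi$ the chain $\|u_\phi\|_\eh=\|\pi'(u_\phi)\|_\eh=\|u^\pi_\phi\|_\eh=\|\gamma_0(u^\pi_\phi)\|_\cb=\|\Phi_{\pi(\phi)}\|_\cb=\|\phi\|_\mm$ delivers the norm identity exactly.

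The main obstacle is the combination of the weak* compactness step with the verification that the limit stays inside the C*-algebraic rather than the $W^*$-extended Haagerup tensor product. One must genuinely know that $\A_n\ehaag\cdots\ehaag\A_1^o$ is a dual operator space so that a cluster point exists, and one must confirm that the weak*-continuous evaluation functionals used to identify that cluster point are exactly those computing $\gamma_0\circ\pi'$; this is what lets the a priori $\A_i''$-valued description coming from Theorem~\ref{jtt} be replaced by matrices with entries in the $\A_i$. Once the dual-space structure from~\cite{effros_ruan} and the weak convergence of $S_{\phi_\nu}(\zeta)$ from Theorem~\ref{jtt} are in hand, the remaining arguments are routine bookkeeping.
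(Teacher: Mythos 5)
There is a genuine gap at the heart of your existence argument. You take the bounded net $u_{\phi_\nu}\in\A_n\odot\A_{n-1}^o\odots\A_1^o$ and claim that, because the extended Haagerup tensor product is a dual operator space, a weak* cluster point $u_\phi$ exists \emph{in} $\A_n\ehaag\A_{n-1}^o\ehaags\A_1^o$. For non-dual C*-algebras this space is not weak*-closed in the ambient dual space where the cluster point actually lives: a bounded net of algebraic tensors clusters only in $\A_n^{**}\whaag(\A_{n-1}^o)^{**}\whaags(\A_1^o)^{**}$ (equivalently, after a faithful concrete realisation, in $\B(H_n)\ehaag\B(H_{n-1}^\dd)\ehaags\B(H_1^\dd)$). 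Theorem~\ref{jtt} itself warns you of this: the limiting matrices $A_i$ have entries only in the von Neumann algebras $\A_i''$ (resp.\ $(\A_i^\dd)''$), not in $\A_i$. Your sentence beginning ``Crucially, the entries of the representing matrices of $u_\phi$ remain in the $\A_i$'' is precisely the statement that needs proof; it is not a formal consequence of membership in the C*-algebraic extended Haagerup tensor product --- it \emph{is} that membership. The paper closes this gap with Proposition~\ref{symin}: one computes the slices of $u_\phi^{\id}$ against tensor products of vector functionals via Lemma~\ref{sli}, identifies them with slices $\Lambda_{\omega_{i_1},\dots,\omega_{i_k}}(\phi)$ of $\phi$ itself, which manifestly lie in $\A_i$ or $\A_i^\dd$, and then invokes Spronk's characterisation (\ref{eq:spronk}) through Lemma~\ref{lem:eh-uniq}. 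Some such slice argument is unavoidable; without it you only obtain a symbol in $\B(H_n)\ehaags\B(H_1^\dd)$, which is Proposition~\ref{prop:ran-gamma0} and not the theorem. (The Remark following Theorem~\ref{symb} makes the point explicitly: the improvement of Theorem~\ref{jtt} from $\A_i''$ to $\A_i$ is a \emph{consequence} of the theorem, so it cannot be assumed on the way in.)

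A second, smaller, gap is the passage from faithful to arbitrary representations. Writing $\pi_i$ as the quotient onto $\pi_i(\A_i)$ followed by an inclusion and appealing to ``functoriality of $\ehaag$'' does not by itself yield $\pi'(u_\phi)=u_\phi^\pi$, because $u_\phi^\pi$ is defined intrinsically through $\gamma_0(u^\pi_\phi)=\Phi_{\pi(\phi)}$ and $\phi$ is only a semi-weak limit of algebraic tensors; one must check that applying $\pi'$ to a representing matrix of $u_\phi$ commutes with this limiting procedure. The paper does this by realising each $\pi_i$ as a compression of an ampliation of the universal representation and verifying the two continuity hypotheses of Lemma~\ref{lem:compsymb} (strong continuity of all ampliations of the connecting maps, and pointwise weak convergence of $\Phi_{\theta\circ\rho(\phi_\nu)}$), which rests on the proof of Proposition~6.2 of~\cite{jtt}. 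Your uniqueness argument, the verification of (\ref{usy}) for a fixed faithful $\pi$ modulo the first gap, and the norm identity are all fine, but the two omitted steps above are where the real work of the theorem lies.
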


\begin{definition}\label{dsym}
The element $u_{\phi}$ defined in Theorem~\ref{symb} will be called
the {\it symbol} of the universal multiplier $\phi$.
\end{definition}

In order to prove the theorem we have to establish a number of
auxiliary results.

 If $\omega\in \B(H)^*$ we let
$\tilde{\omega} \in \B(H^{\dd})^*$ be the functional given by
$\tilde{\omega}(a^{\dd}) = \omega(a)$. Note that if $\omega =
\omega_{\xi,\eta}$ is the vector functional $a\mapsto (a\xi,\eta)$
then $\tilde{\omega} = \omega_{\eta^{\dd},\xi^{\dd}}$.

\begin{lemma}\label{sli}
  Let $\A_i\subseteq\B(H_i)$ be a C*-algebra, $\xi_i,\eta_i\in
  H_i$ and $\omega_i = \omega_{\xi_i,\eta_i}$, $i = 1,\dots,n$.
  Suppose that $\phi\in M(\As)$. Then
\begin{equation}\label{key}
(\phi(\xi_1\otimess\xi_n),\eta_1\otimess\eta_n) =
\begin{cases}
  \langle
  u^{\id}_{\phi},\omega_n\otimes\tilde{\omega}_{n-1}\otimess\tilde{\omega}_1\rangle \quad\text{$n$ even,}\\
  \langle
  u^{\id}_{\phi},\omega_n\otimes\tilde{\omega}_{n-1}\otimess{\omega}_1\rangle \quad\text{$n$ odd.}
\end{cases}
\end{equation}
\end{lemma}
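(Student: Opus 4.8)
The plan is to prove the identity~\eqref{key} by first verifying it on elementary tensors, where everything reduces to an explicit computation, and then extending to arbitrary multipliers by a weak-continuity (approximation) argument using Theorem~\ref{jtt}. Concretely, I would begin with the even case and take $\phi = a_1\otimes\cdots\otimes a_n$ with $a_i\in\A_i$. On the left-hand side, $(\phi(\xi_1\otimes\cdots\otimes\xi_n),\eta_1\otimes\cdots\otimes\eta_n)$ factors as $\prod_{i=1}^n(a_i\xi_i,\eta_i)=\prod_{i=1}^n\omega_i(a_i)$. On the right-hand side, Theorem~\ref{symb} (or rather the explicit formula for $u^{\id}_\phi$ given just before it) tells us $u^{\id}_\phi = a_n\otimes a_{n-1}^\dd\otimes\cdots\otimes a_2\otimes a_1^\dd$, and the slice-pairing~\eqref{eq:slices} applied iteratively gives $\langle u^{\id}_\phi,\omega_n\otimes\tilde\omega_{n-1}\otimes\cdots\otimes\tilde\omega_1\rangle = \omega_n(a_n)\,\tilde\omega_{n-1}(a_{n-1}^\dd)\cdots\tilde\omega_1(a_1^\dd)$. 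The key bookkeeping point is that $\tilde\omega_i(a_i^\dd)=\omega_i(a_i)$ by the definition of $\tilde\omega$ recalled just before the lemma; once this is observed, the two products agree. This settles~\eqref{key} for elementary tensors, and by linearity (both sides are linear in $\phi$) for all $\phi\in\A_1\odot\cdots\odot\A_n$.

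Next I would pass from the algebraic tensor product to a general $\phi\in M(\As)$. By Theorem~\ref{jtt}, there is a net $\phi_\nu = A_1^\nu\odot\cdots\odot A_n^\nu$ of finitely-supported multipliers converging semi-weakly to $\phi$, with $\sup_\nu\|\phi_\nu\|_\mm<\infty$. Since each $\phi_\nu$ lies in $\A_1\odot\cdots\odot\A_n$ (being a finite matrix product of finite block matrices), the identity~\eqref{key} holds for every $\phi_\nu$. The left-hand side converges along the net to $(\phi(\xi_1\otimes\cdots\otimes\xi_n),\eta_1\otimes\cdots\otimes\eta_n)$ by the very definition of semi-weak convergence applied to the vector $\xi_1\otimes\cdots\otimes\xi_n$ and functional given by $\eta_1\otimes\cdots\otimes\eta_n$. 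For the right-hand side, I would use that the symbol map $\phi\mapsto u^{\id}_\phi$ is, via $\gamma_0$ and Proposition~\ref{prop:ran-gamma0}, isometric and hence the uniform bound $\sup_\nu\|\phi_\nu\|_\mm<\infty$ gives $\sup_\nu\|u^{\id}_{\phi_\nu}\|_\eh<\infty$; together with the weak-* convergence $u^{\id}_{\phi_\nu}\to u^{\id}_\phi$ (which follows from the semi-weak convergence of $\phi_\nu$ and the weak-* continuity of the slice functionals appearing in the pairing), one concludes $\langle u^{\id}_{\phi_\nu},\omega_n\otimes\tilde\omega_{n-1}\otimes\cdots\otimes\tilde\omega_1\rangle\to\langle u^{\id}_\phi,\omega_n\otimes\tilde\omega_{n-1}\otimes\cdots\otimes\tilde\omega_1\rangle$. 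Matching the two limits yields~\eqref{key} for $\phi$.

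The odd case is entirely analogous, the only change being that the last slot carries $\omega_1$ rather than $\tilde\omega_1$ (because $u^{\id}_\phi$ ends in $a_1$ rather than $a_1^\dd$ in the odd case), so no conjugation identity is needed in the final factor; the elementary-tensor computation and the approximation step go through verbatim with this single adjustment.

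The step I expect to be the main obstacle is making the convergence of the right-hand side rigorous, i.e.\ justifying that semi-weak convergence of $\phi_\nu\to\phi$ actually forces $u^{\id}_{\phi_\nu}\to u^{\id}_\phi$ in the appropriate weak-* sense against the particular functionals $\omega_n\otimes\tilde\omega_{n-1}\otimes\cdots$. This is not purely formal, because $u^{\id}_\phi$ is defined abstractly through the completely bounded map $\Phi_{\pi(\phi)}$ and the isometry $\gamma_0$, so one must trace the definitions carefully to see that pairing $u^{\id}_{\phi_\nu}$ against these rank-one-type functionals recovers a quantity that is continuous under semi-weak convergence of the multipliers. I would resolve this by invoking the last assertion of Theorem~\ref{jtt} (the strong convergence $A_i^\nu\to A_i$ and the explicit formula for $S_{\phi}$ in terms of the block operator matrices $A_i$), which lets me compute both sides of~\eqref{key} directly in terms of the $A_i$ and thereby reduce the convergence to convergence of the matrix entries evaluated against $\omega_i$, a matter of routine bounded-net weak-operator-topology continuity given the uniform norm bounds.
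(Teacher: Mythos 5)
Your proposal follows essentially the same route as the paper: verify \eqref{key} for elementary tensors (where both sides reduce to $\prod_i\omega_i(a_i)$ via $\tilde\omega_i(a_i^\dd)=\omega_i(a_i)$), extend by linearity to $\A_1\odot\cdots\odot\A_n$, and then pass to general $\phi$ using the approximating net and the strong convergence $A_i^\nu\to A_i$ from Theorem~\ref{jtt}. One caution on your final step: convergence of the pairing $\langle A_n^\nu,\omega_n\rangle\cdots\langle A_1^\nu,\tilde\omega_1\rangle$ cannot be reduced to entrywise or weak-operator convergence of the sliced matrices with uniform bounds (a row of orthonormal vectors times its adjoint column is the standard counterexample); the paper instead upgrades the strong convergence of $A_i^\nu$ to \emph{strong} convergence of the possibly infinite matrices $\langle A_i^\nu,\omega_i\rangle$ via the identity $\|\langle A,\omega_{\xi,\eta}\rangle\zeta\|^2=\bigl(A(\xi\otimes\zeta),\eta\otimes\langle A,\omega_{\xi,\eta}\rangle\zeta\bigr)$ and then invokes joint strong continuity of multiplication on bounded sets, and some such argument is genuinely needed to close your last step.
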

\begin{proof}
We only consider the case $n$ is even since the proof for odd
$n$ is similar. Suppose that $\phi$ is an elementary tensor, say
$\phi = a_1\otimess a_n$. Then $u^{\id}_{\phi} = a_n\otimes
a_{n-1}^{\dd}\otimess a_1^{\dd}$ and thus
$$(\phi(\xi_1\otimess\xi_n),\eta_1\otimess\eta_n)
= \prod_{i=1}^n (a_i\xi_i,\eta_i) = \langle
u_{\phi}^{\id},\omega_n\otimes\tilde{\omega}_{n-1}\otimess\tilde{\omega}_1\rangle.$$
By linearity, (\ref{key}) holds for each $\phi\in \A_1\odots\A_n$.

Now let $\phi$ be an arbitrary element of $M(\As)$. By
Theorem~\ref{jtt}, there exists a net $\{\phi_{\nu}\}\subseteq
\A_1\odots\A_n$ and representations $u^{\id}_{\phi} = A_n\odots A_1$
and $u^{\id}_{\phi_{\nu}} = A_n^{\nu}\odots A_1^{\nu}$, where
$A_i^{\nu}$ are finite matrices with entries in $\A_i$ if $i$ is even
and in $\A_i^{\dd}$ if $i$ is odd, such that
$\phi_{\nu}\rightarrow\phi$ semi-weakly, $A_i^{\nu}\rightarrow A_i$
strongly and all norms $\|A_i\|, \|A_i^{\nu}\|$ are bounded by a
constant depending only on $n$. As in~(\ref{product}), we have
\begin{equation}\label{pro}
\langle
u^{\id}_{\phi},\omega_n\otimes\tilde{\omega}_{n-1}\otimess\tilde{\omega}_1\rangle
 =
\langle A_n,\omega_n\rangle\langle
A_{n-1},\tilde{\omega}_{n-1}\rangle\dots\langle
A_1,\tilde{\omega}_1\rangle.
\end{equation}
Moreover,  all norms $\|\langle A_i^{\nu},\omega_i\rangle\|$ (for
even $i$) and $\|\langle A_i^{\nu},\tilde{\omega}_i\rangle\|$ (for
odd $i$) are bounded by a constant depending only on $n$, and the
strong convergence of $A_i^{\nu}$ to $A_i$ implies that $\langle
A_i^{\nu},\omega_i\rangle$ converges strongly to $\langle
A_i,\omega_i\rangle$. Indeed, it is easy to check that if
$\xi,\eta\in H$, $A\in M_I(\B(H))\equiv \B(H\otimes\ell_2(I))$
and $\zeta\in\ell_2(I)$ for some index set $I$ then
$$\|\langle A,\omega_{\xi,\eta}\rangle\zeta\|^2 = \left(
A(\xi\otimes\zeta),\eta\otimes \langle A,\omega_{\xi,\eta}\rangle
\zeta\right).$$ This implies that $\|\langle
A_i-A_i^{\nu},\omega_i\rangle\eta\|\leq
C\|(A_i-A_i^{\nu})(\xi_i\otimes\eta)\|$ for some constant $C > 0$,
and the strong convergence follows.

Since operator multiplication is jointly strongly continuous on
bounded sets, it now follows from~(\ref{pro}) that
$$\langle
u^\id_{\phi_{\nu}},\omega_n\otimes\tilde{\omega}_{n-1}\otimess\tilde{\omega}_1\rangle
 \rightarrow
\langle
u^\id_{\phi},\omega_n\otimes\tilde{\omega}_{n-1}\otimess\tilde{\omega}_1\rangle.$$
On the other hand, since $\phi_{\nu}\rightarrow\phi$ semi-weakly,
$$(\phi_{\nu}(\xi_1\otimess\xi_n),\eta_1\otimess\eta_n)
\rightarrow
(\phi(\xi_1\otimess\xi_n),\eta_1\otimess\eta_n).$$
The proof is complete. \end{proof}

\begin{lemma}
  \label{lem:eh-uniq}
  Let $H_i$ be a Hilbert space and $\E_i\subseteq \B(H_i)$
  be an operator space, $i=1,\dots,n$. %
  Suppose that $\X$ and $\Y$ are closed
  subspaces of $\E_1$ and $\E_n$, respectively and let $u,v\in \E_1\haags\E_n$.
  If
  \[ R_\omega(u)\in \X\qand L_{\omega'}(v)\in \Y\]
  whenever $\omega=\omega_2\otimess\omega_n$ and
  $\omega'=\omega_1'\otimess\omega_{n-1}'$
  where every $\omega_i$, $\omega_i'\in \B(H_i)_*$ is a vector functional, then
  \[u\in \X\ehaag\E_2\ehaags\E_n\qand v\in \E_1\ehaags\E_{n-1}\ehaag\Y.\]
\end{lemma}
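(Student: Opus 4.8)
The plan is to prove the first assertion $u\in\X\ehaag\E_2\ehaags\E_n$; the second follows by the symmetric argument with the roles of left and right slices interchanged. Using associativity of the Haagerup tensor product, I regard $u$ as an element of $\E_1\haag\bigl(\E_2\haags\E_n\bigr)$, and I fix a completely isometric embedding of the ambient space $\B(H_2)\ehaags\B(H_n)$ into some $\B(K)$, so that $\E_2\ehaags\E_n$ becomes a concrete operator subspace of $\B(K)$. Since the Haagerup tensor product embeds completely isometrically into the extended one, $u\in\B(H_1)\ehaag\B(K)$. My goal is to apply the slice-map description \eqref{eq:spronk} with $\E=\X\subseteq\B(H_1)$ and $\F=\E_2\ehaags\E_n\subseteq\B(K)$; this reduces the problem to verifying that $L_{\omega_1}(u)\in\E_2\ehaags\E_n$ for every $\omega_1\in\B(H_1)^*$ and that $R_\psi(u)\in\X$ for every $\psi\in\B(K)^*$. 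The first of these is automatic: regrouping $u\in\E_1\haag(\E_2\haags\E_n)$ and applying \eqref{eq:spronk2} gives $L_{\omega_1}(u)\in\E_2\haags\E_n\subseteq\E_2\ehaags\E_n$.

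The substance of the proof is the second slice condition, and here is where the hypothesis must be upgraded. Since $R_\psi(u)$ depends only on the restriction of $\psi$ to the tail, it suffices to show that $R_\phi(u)\in\X$ for every $\phi\in(\E_2\ehaags\E_n)^*$, knowing this only for the product vector functionals $\phi=\omega_2\otimess\omega_n$, which under the regrouping reproduce the iterated slices $R_{\omega_2\otimess\omega_n}$ of the hypothesis. I would argue as follows. By linearity of $\phi\mapsto R_\phi(u)$, the conclusion holds for all $\phi$ in the linear span of such product functionals. This span is weak$^*$ dense in $(\E_2\ehaags\E_n)^*$: by the bipolar theorem it is enough to check that the product vector functionals separate points of $\E_2\ehaags\E_n$, and this follows because each element of the extended Haagerup tensor product is, by definition, separately weak$^*$ continuous (normal) in the functional variables, while the vector functionals are weak$^*$ dense in each $\B(H_i)^*$ by Goldstine's theorem. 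Next, $\phi\mapsto R_\phi(u)$ is weak$^*$-to-weak continuous: for $\theta\in\B(H_1)^*$ the slice identity \eqref{eq:slices} gives $\theta(R_\phi(u))=\phi\bigl(L_\theta(u)\bigr)$ with $L_\theta(u)\in\E_2\ehaags\E_n$ fixed, and evaluation against a fixed element of $\E_2\ehaags\E_n$ is exactly weak$^*$ continuity on its dual. Finally, $\X$ is norm closed and convex, hence weakly closed. Approximating an arbitrary $\phi$ weak$^*$ by a net from this span therefore yields elements $R_{\phi_\alpha}(u)\in\X$ with $R_{\phi_\alpha}(u)\to R_\phi(u)$ weakly, so $R_\phi(u)\in\X$.

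With both slice conditions verified, \eqref{eq:spronk} gives $u\in\X\ehaag(\E_2\ehaags\E_n)$, which by associativity of the extended Haagerup tensor product is $\X\ehaag\E_2\ehaags\E_n$. The claim for $v$ is identical after regrouping $v\in(\E_1\haags\E_{n-1})\haag\E_n$, using the left slices $L_{\omega_1'\otimess\omega_{n-1}'}$ and the weak$^*$ density of product vector functionals in $(\E_1\ehaags\E_{n-1})^*$. I expect the main obstacle to be precisely the upgrade carried out in the second paragraph: turning the hypothesis, which only controls slices by product vector functionals, into control of all slices required by \eqref{eq:spronk}. The three ingredients that make this work---weak$^*$ density of the product functionals (via normality and Goldstine), weak$^*$-to-weak continuity of $\phi\mapsto R_\phi(u)$ (via the slice identity), and weak closedness of the norm-closed subspace $\X$---must be combined carefully, and the separation-of-points step in particular relies essentially on the normality built into the extended Haagerup tensor product.
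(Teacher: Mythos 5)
Your proof is correct, and its skeleton is the paper's: linearity handles the span of the product vector functionals, the left-slice condition is automatic from injectivity of the Haagerup tensor product, and Spronk's characterisation \eqref{eq:spronk} finishes; the only real work is extending the right-slice condition from product vector functionals to a sufficiently large class. It is in that extension step that you diverge. The paper stays in the predual: it identifies $(\B(H_2)\ehaags\B(H_n))_*$ with $\C_1(H_2)\haags\C_1(H_n)$, notes that the span of the product vector functionals is norm dense there, and concludes from the estimate $\|R_\omega(u)-R_{\omega_m}(u)\|\leq\|\omega-\omega_m\|\,\|u\|_\eh$ together with norm closedness of $\X$ --- three lines, but resting on the Blecher--Smith/Effros--Ruan description of the predual of the extended Haagerup tensor product. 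You work in the full dual instead: weak$^*$ density of the span of product vector functionals via the bipolar theorem (with normality and Goldstine supplying separation of points), weak$^*$-to-weak continuity of $\phi\mapsto R_\phi(u)$ from the slice identity \eqref{eq:slices}, and weak closedness of the norm-closed subspace $\X$ by Mazur's theorem. Your version avoids the predual identification and delivers the slice condition for all bounded functionals (so either form of \eqref{eq:spronk} applies), at the price of a softer, net-based argument; the paper's version is quantitative and shorter. One wording caveat: it is the \emph{span} of the vector functionals, not the set of vector functionals itself, that is weak$^*$ dense in $\B(H_i)^*$; your separation argument implicitly uses the span, so nothing breaks.
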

\begin{proof}
 Let $\F_i$ be the span of the vector functionals on
  $\B(H_i)$. By linearity, $R_\omega(u)\in \X$ for each
  $\omega\in \F_2\odots\F_n$. Now suppose that
  \[\omega\in (\B(H_2)\ehaags\B(H_n))_*=\C_1(H_2)\haags\C_n(H_n).\]
  There exists a sequence $(\omega_m)\subseteq \F_2\odots\F_n$ such
  that $\omega_m\to \omega$ in norm. Hence
  \[ \|R_\omega(u)-R_{\omega_m}(u)\|_{\B(H_1)} \leq
  \|\omega-\omega_m\|\,\|u\|_{\eh}\to 0,\]
  whence $R_\omega(u)=\lim_m R_{\omega_m}(u)\in \X$. Spronk's
  formula~(\ref{eq:spronk}) now implies that $u\in
  \X\ehaag\E_2\ehaags\E_n$. The assertion concerning $v$ has a similar
  proof.
\end{proof}

We will use slice maps defined on the minimal tensor product of
several C*-algebras as follows. Assume that $\A_i\subseteq\B(H_i)$
and $\omega_i\in \B(H_i)^*$, $i = 1,\dots,n$, and let $\phi\in
\A_1\otimess\A_n$. If $1\leq i_1 < \dots < i_k\leq n$ and
$\{\ell_1,\dots,\ell_{n-k}\}$ is the complement of $\{i_1,\dots,i_k\}$
in $\{1,\dots,n\}$, let
$$\Lambda_{\omega_{i_1},\dots,\omega_{i_k}} : \A_1\otimess\A_n\rightarrow\A_{\ell_1}\otimess\A_{\ell_{n-k}}$$
be the unique norm continuous linear mapping given on elementary
tensors by
$$\Lambda_{\omega_{i_1},\dots,\omega_{i_k}}(a_1\otimess a_n) =
\omega_{i_1}(a_{i_1})\dots\omega_{i_k}(a_{i_k})\,a_{\ell_1}\otimess
a_{\ell_{n-k}}.$$

\begin{proposition}\label{symin}
Let $\A_i\subseteq\B(H_i)$, $i = 1,\dots,n$, be C*-algebras
and let $\phi\in M(\As)$. Then
\[u_{\phi}^{\id}\in
\begin{cases}
\A_n\ehaag\A_{n-1}^{\dd}\ehaags\A_1^{\dd}\;&\text{if $n$ is even,}\\
\A_n\ehaag\A_{n-1}^{\dd}\ehaags\A_1&\text{if $n$ is odd.}
\end{cases}\]
\end{proposition}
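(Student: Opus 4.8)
The plan is to start from the element $u\defeq u^{\id}_\phi$, which by construction (Proposition~\ref{prop:ran-gamma0}) already lies in $\B(H_n)\ehaag\B(H_{n-1}^\dd)\ehaags\B(H_1^\dd)$ in the even case, and to cut each of its tensor factors down to the required subalgebra $\A_n,\A_{n-1}^\dd,\dots,\A_1^\dd$. The two ingredients are the ``key formula'' of Lemma~\ref{sli}, which computes the pairing of $u$ with products of vector functionals in terms of $\phi$ itself, and an iterated version of the slice-map description of subspaces of the extended Haagerup tensor product coming from~(\ref{eq:spronk}) and Lemma~\ref{lem:eh-uniq}. I treat only the even case; the odd case differs only in the decoration of the last factor and in the parity bookkeeping.

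The heart of the matter is the following slice computation. Fix $k\in\{1,\dots,n\}$ and vector functionals $\omega_j=\omega_{\xi_j,\eta_j}$ on $\B(H_j)$ for $j\neq k$. Collapsing every factor of $u$ except the $k$-th against the functionals $\tilde\omega_j$ (for odd $j$) and $\omega_j$ (for even $j$) produces an operator $S_k(u)$ lying in $\B(H_k)$ if $k$ is even and in $\B(H_k^\dd)$ if $k$ is odd. Pairing $S_k(u)$ against a further vector functional in the surviving slot and invoking Lemma~\ref{sli} identifies this number with $(\phi(\xi_1\otimess\xi_n),\eta_1\otimess\eta_n)$; on the other hand, the same number equals the pairing of the slice $\Lambda_{\{\omega_j\}_{j\neq k}}(\phi)$ (for even $k$) or of its adjoint (for odd $k$) against that surviving functional, by the very definition of $\Lambda$ on elementary tensors. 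Since these identities are linear in $\phi$ and hold for $\phi\in\A_1\odots\A_n$, and since both sides are norm continuous in $\phi\in\A_1\otimess\A_n$, they persist for all $\phi\in M(\As)$; as vector functionals determine an operator, we conclude that $S_k(u)$ equals $\Lambda_{\{\omega_j\}_{j\neq k}}(\phi)$ for even $k$ and its adjoint for odd $k$. Because the slice maps $\Lambda$ on the minimal tensor product of the $\A_j$ take values in the remaining factor $\A_k$, this shows $S_k(u)\in\A_k$ for even $k$ and $S_k(u)\in\A_k^\dd$ for odd $k$.

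It then remains to deduce membership in the tensor product of the subalgebras from the fact that every iterated slice $S_k(u)$ lands in the corresponding subalgebra. This follows from~(\ref{eq:spronk}) and Lemma~\ref{lem:eh-uniq} by induction on the number of factors: grouping $u\in\B(H_n)\ehaag\R$ with $\R$ the product of the remaining factors, the $k=1$ slice condition together with Lemma~\ref{lem:eh-uniq} cuts the first factor to $\A_n$; for each vector functional $\omega_n$ the left slice $L_{\omega_n}(u)$ has iterated slices equal to the slices $S_k(u)$ of $u$ ($k\geq2$) and so, by the inductive hypothesis, lies in the corresponding $(n-1)$-fold product of subalgebras; and a final appeal to~(\ref{eq:spronk}) in its predual form then combines these two facts into $u\in\A_n\ehaag\A_{n-1}^\dd\ehaags\A_1^\dd$. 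The main obstacle is precisely this assembling step: Lemma~\ref{lem:eh-uniq} only reaches the outermost factors, so the middle factors must be captured by an induction that carefully tracks which class of functionals (vector, predual, or full dual) each slice map is allowed to range over, and that invokes the injectivity (functoriality under complete isometries) of the extended Haagerup tensor product in order to legitimately embed $\R$ in some $\B(K)$ and apply~(\ref{eq:spronk}). Once the induction is set up correctly, the parity alternation between the even and odd cases is a routine bookkeeping matter.
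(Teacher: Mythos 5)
Your proposal is correct and follows essentially the same route as the paper: the key formula of Lemma~\ref{sli} identifies the iterated vector-functional slices of $u_\phi^{\id}$ with the slices $\Lambda_{\omega_{i_1},\dots,\omega_{i_k}}(\phi)$ (or their adjoints), which land in the appropriate $\A_k$ or $\A_k^{\dd}$, and the membership in the tensor product of the subalgebras is then assembled factor by factor via Lemma~\ref{lem:eh-uniq} and~(\ref{eq:spronk}). Your formal induction on the number of factors is just a tidier packaging of the paper's ``continuing in this fashion'' iteration, so no substantive difference.
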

\begin{proof}
We only consider the case $n=3$. Let $u = u^{\id}_{\phi}$;
by definition, $u\in\B(H_3)\ehaag\B(H_{2}^{\dd})\ehaag\B(H_1)$. Let $\xi_i,\eta_i\in H_i$ and $\omega_i =
\omega_{\xi_i,\eta_i}$, $i = 1,2,3$. Then by~(\ref{eq:slices})
and Lemma~\ref{sli},
\begin{align*}
(R_{\tilde{\omega}_{2}\otimes\omega_1}(u)\xi_3,\eta_3)
& = \langle
R_{\tilde{\omega}_{2}\otimes{\omega}_1}(u),
\omega_{3}\rangle
 =
\langle u,\omega_3\otimes\tilde{\omega}_{2}\otimes{\omega}_1\rangle\\
& =
(\phi(\xi_1\otimes\xi_2\otimes\xi_3),\eta_1\otimes\eta_2\otimes\eta_3)
 = (\Lambda_{\omega_1,\omega_{2}}(\phi)\xi_3,\eta_3).
\end{align*}
Thus
$$R_{\tilde{\omega}_{2}\otimes\tilde{\omega}_1}(u)
= \Lambda_{\omega_1,\omega_{2}}(\phi)\in\A_3.$$
Lemma~\ref{lem:eh-uniq} now implies that $u\in\A_3\ehaag\B(H_{2}^{\dd})\ehaag\B(H_1)$.

Let $w = R_{{\omega}_1}(u)$.  By the
previous paragraph, $w\in\A_3\ehaag\B(H_{2}^{\dd})$.
By~(\ref{eq:slices}) and Lemma~\ref{sli},
\begin{align*}
(L_{\omega_3}(w)\eta_{2}^\dd,\xi_{2}^\dd)&= \langle
L_{\omega_3}(w), \tilde{\omega}_{2} \rangle
=
\langle R_{\omega_{1}}(u),
\omega_3\otimes\tilde \omega_{2}\rangle
\\ &=
\langle u, \omega_3\otimes\tilde\omega_{2}\otimes\omega_1\rangle
=
(\Lambda_{\omega_1,\omega_3}(\phi)\xi_{2},\eta_{2})
=
(\Lambda_{\omega_1,\omega_3}(\phi)^\dd
\eta_{2}^\dd,\xi_{2}^\dd).
\end{align*}
Hence $L_{\omega_3}(w) =
\Lambda_{\omega_1,\omega_3}(\phi)^\dd\in\A_{2}^{\dd}$ and, by
Lemma~\ref{lem:eh-uniq}, $w\in \A_3\ehaag\A_{2}^{\dd}$. Applying
this lemma again shows that $u\in
\A_3\ehaag\A_{2}^{\dd}\ehaag\B(H_1).$ Continuing in this fashion
we see that $u\in \A_3\ehaag\A_{2}^{\dd}\ehaag\A_1$. \end{proof}

\begin{lemma}
  \label{lem:compsymb}
  Let $\As$ be C*-algebras and let \[\rho_i:\A_i\to \B(K_i),\qquad
  \theta_i:\rho_i(\A_i)\to \B(H_i)\] be representations,
  $i=1,\dots,n$. Suppose that

  (i) for any cardinal number~$\kappa$, the
  representations $\theta_i^{(\kappa)}:\rho_i(\A_i)\to \B(H_i^\kappa)$
  are strongly continuous, and

  (ii) whenever $\phi\in M(\As)$
  and $\{\phi_\nu\}$ is
  a net in $\A_1\odots\A_n$ such that $\rho(\phi_\nu)\to \rho(\phi)$
  semi-weakly and $\sup_{\nu}\|\phi_\nu\|_m < \infty$ then $\Phi_{\theta\circ\rho(\phi_\nu)}\to
  \Phi_{\theta\circ \rho(\phi)}$ pointwise weakly.

Then $u_\phi^{\theta\circ\rho}=\theta'(u_\phi^\rho)$, for each
$\phi\in M(\As)$.
\end{lemma}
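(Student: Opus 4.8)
The plan is to prove the identity first for $\phi$ in the algebraic tensor product $\A_1\odots\A_n$, where the symbol is explicit, and then to reach an arbitrary multiplier by an approximation argument in which the two hypotheses control the two sides of the equation separately. I describe the even case throughout; the odd case is identical up to the usual change in the last tensor leg. For an elementary tensor $\phi=a_1\otimess a_n$ and any representation $\sigma=\sigma_1\otimess\sigma_n$, the defining relation $\gamma_0(u^\sigma_\phi)=\Phi_{\sigma(\phi)}$ together with the value of the symbol on elementary tensors recorded just before Theorem~\ref{symb} (applied to the concrete algebras $\sigma_i(\A_i)$) gives $u^\sigma_\phi=\sigma_n(a_n)\otimes\sigma_{n-1}(a_{n-1})^\dd\otimess\sigma_1(a_1)^\dd$. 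Taking $\sigma=\theta\circ\rho$ and $\sigma=\rho$, and recalling that $\theta'=\theta_n\ehaag\theta_{n-1}^\dd\ehaags\theta_1^\dd$ sends $\rho_n(a_n)\otimes\rho_{n-1}(a_{n-1})^\dd\otimess\rho_1(a_1)^\dd$ to $\theta_n(\rho_n(a_n))\otimes\theta_{n-1}(\rho_{n-1}(a_{n-1}))^\dd\otimess\theta_1(\rho_1(a_1))^\dd$, I read off $u^{\theta\circ\rho}_\phi=\theta'(u^\rho_\phi)$. Since $\phi\mapsto u^\sigma_\phi$ and $\theta'$ are linear, this persists for every $\phi\in\A_1\odots\A_n$.

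For general $\phi\in M(\As)$ I reduce to scalar pairings. By Proposition~\ref{symin}, $u^\rho_\phi$ lies in $\rho_n(\A_n)\ehaag\rho_{n-1}(\A_{n-1})^\dd\ehaags\rho_1(\A_1)^\dd$, the domain of $\theta'$, so $\theta'(u^\rho_\phi)$ is defined and lies, like $u^{\theta\circ\rho}_\phi$, in $\B(H_n)\ehaag\B(H_{n-1}^\dd)\ehaags\B(H_1^\dd)$. Two elements of this extended Haagerup tensor product agree once their pairings against every elementary tensor $\omega_n\otimes\tilde\omega_{n-1}\otimess\tilde\omega_1$ of vector functionals coincide, so I fix such functionals and compare the two pairings. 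Applying Theorem~\ref{jtt} to $\rho(\phi)$ and lifting through the $\rho_i$, I obtain a net $\{\phi_\nu\}\subseteq\A_1\odots\A_n$ with $\rho(\phi_\nu)\to\rho(\phi)$ semi-weakly, $\sup_\nu\|\phi_\nu\|_\mm<\infty$, and representations $u^\rho_{\phi_\nu}=A_n^\nu\odots A_1^\nu$ and $u^\rho_\phi=A_n\odots A_1$ whose factors converge strongly (entrywise) and are uniformly bounded in norm.

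Now I pass to the limit on each side. On the composed side, hypothesis~(ii) gives $\Phi_{\theta\circ\rho(\phi_\nu)}\to\Phi_{\theta\circ\rho(\phi)}$ pointwise weakly; since $\Phi_{\theta\circ\rho(\psi)}=\gamma_0(u^{\theta\circ\rho}_\psi)$ and, by Lemma~\ref{sli}, the matrix coefficients of $\gamma_0(u)$ on rank-one elementary tensors are exactly the pairings of $u$ against elementary tensors of vector functionals, this yields $\langle u^{\theta\circ\rho}_{\phi_\nu},\omega_n\otimes\tilde\omega_{n-1}\otimess\tilde\omega_1\rangle\to\langle u^{\theta\circ\rho}_\phi,\omega_n\otimes\tilde\omega_{n-1}\otimess\tilde\omega_1\rangle$. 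On the $\rho$ side, \eqref{eq:eh-adjoints} rewrites $\langle\theta'(u^\rho_{\phi_\nu}),\omega_n\otimes\tilde\omega_{n-1}\otimess\tilde\omega_1\rangle=\langle u^\rho_{\phi_\nu},(\omega_n\circ\theta_n)\otimes(\tilde\omega_{n-1}\circ\theta_{n-1}^\dd)\otimess(\tilde\omega_1\circ\theta_1^\dd)\rangle$, and similarly for $u^\rho_\phi$. By hypothesis~(i) each $\theta_i$ is strongly continuous, hence normal, so each pulled-back functional is normal and is a norm-limit of vector functionals; together with the uniform bound on $\|u^\rho_{\phi_\nu}\|_\eh$, an $\epsilon/3$ argument reduces the convergence of these pairings to the case of vector functionals. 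That case follows because strong convergence $A_i^\nu\to A_i$ forces the slices $\langle A_i^\nu,\cdot\rangle$ to converge strongly (the identity in the proof of Lemma~\ref{sli}) while multiplication is jointly strongly continuous on bounded sets, so $\langle\theta'(u^\rho_{\phi_\nu}),\omega_n\otimes\tilde\omega_{n-1}\otimess\tilde\omega_1\rangle\to\langle\theta'(u^\rho_\phi),\omega_n\otimes\tilde\omega_{n-1}\otimess\tilde\omega_1\rangle$. As the first paragraph gives $u^{\theta\circ\rho}_{\phi_\nu}=\theta'(u^\rho_{\phi_\nu})$ for every $\nu$, the two limits coincide; since the vector functionals were arbitrary, $u^{\theta\circ\rho}_\phi=\theta'(u^\rho_\phi)$.

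I expect the $\rho$-side limit to be the main obstacle. The convergence $u^\rho_{\phi_\nu}\to u^\rho_\phi$ is naturally available only when tested against vector functionals, whereas \eqref{eq:eh-adjoints} forces one to test $\theta'(u^\rho_{\phi_\nu})$ against the pulled-back functionals $\omega_i\circ\theta_i$. Bridging this gap is exactly the role of hypothesis~(i), which renders these functionals normal and hence approximable in norm by vector functionals; arranging that the resulting convergence is registered against the same family of functionals as the convergence supplied by hypothesis~(ii) on the composed side is the delicate point.
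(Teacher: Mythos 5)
Your proof is correct and shares its skeleton with the paper's: both treat $\phi\in\A_1\odots\A_n$ first via the explicit formula for the symbol of an elementary tensor, then approximate a general $\phi$ by the net $\{\phi_\nu\}$ supplied by Theorem~\ref{jtt} and Proposition~\ref{symin}, letting hypothesis~(ii) control the limit of $u^{\theta\circ\rho}_{\phi_\nu}=\gamma_0^{-1}(\Phi_{\theta\circ\rho(\phi_\nu)})$ and hypothesis~(i) control the limit of $\theta'(u^\rho_{\phi_\nu})$, and finally matching the two limits. Where you genuinely differ is in the mechanism for the second limit. The paper applies $\theta_i^{(\kappa)}$ (resp.\ $(\theta_i^\dd)^{(\kappa)}$) directly to the block matrices $A_i^\nu$, invokes hypothesis~(i) at the cardinal $\kappa$ indexing the representation $u^\rho_\phi=A_n\odots A_1$ to get strong convergence $\tilde A_i^\nu\to\tilde A_i$ with uniform bounds, deduces weak convergence of the operator-valued evaluations $\gamma_0(\theta'(u^\rho_{\phi_\nu}))(T_{n-1}\otimess T_1)$, and concludes by injectivity of $\gamma_0$. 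You instead dualise via~\eqref{eq:eh-adjoints} and test against elementary tensors of vector functionals: the pulled-back functionals $\omega_i\circ\theta_i$ are normal because $\theta_i$ is strongly continuous, hence norm-limits of \emph{finite sums} of vector functionals (not of single vector functionals --- a harmless imprecision in your write-up), and the uniform bound $\sup_\nu\|u^\rho_{\phi_\nu}\|_\eh\leq\sup_\nu\prod_i\|A_i^\nu\|<\infty$ converts convergence of the pairings against vector functionals (which is exactly the computation in the second half of the proof of Lemma~\ref{sli}) into convergence against the pulled-back ones. Your route buys a small economy: it uses only the $\kappa=1$ instance of hypothesis~(i), whereas the paper needs the ampliations of arbitrary cardinality; the paper's route avoids the predual approximation and the extension of $\omega_i\circ\theta_i$ to a normal functional on $\B(K_i)$, reading everything off the matrices directly. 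One citation should be adjusted: the identity relating matrix coefficients of $\gamma_0(u)$ on rank-one elementary tensors to pairings of $u$ against vector functionals is the one recorded as~(\ref{mixx}) in the proof of Proposition~\ref{ocm}; Lemma~\ref{sli} is its special case for $u=u^{\id}_\phi$. Neither point affects the validity of the argument.
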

\begin{proof}
  If $\phi=a_1\otimess a_n$ is an elementary tensor, then
$u_\phi^\rho=\rho'(a_n\otimes a_{n-1}^o\otimess a_1^o)$, so
$$u_\phi^{\theta\circ\rho}=(\theta\circ\rho)'(a_n\otimes
      a_{n-1}^o\otimess a_1^o)
      =\theta'(u_\phi^\rho).$$
By linearity, the claim also holds for $\phi\in \A_1\odots\A_n$.

  If $\phi\in M(\As)$ is arbitrary then $\rho(\phi)\in
  M(\rho(\A_1),\dots,\rho(\A_n))$ and by Theorem~\ref{jtt} and
  Proposition~\ref{symin}, there exist a net
  $\{\phi_{\nu}\}\subseteq \A_1\odots\A_n$ such that
  $\rho(\phi_{\nu})\rightarrow\rho(\phi)$ semi-weakly, a
  representation $u_{\phi}^\rho = A_n\odots A_1$, where $A_i\in
  M_\kappa(\rho_i(\A_i))\subseteq\B(K_i^\kappa)$ if $i$ is even and
  $A_i\in M_\kappa(\rho_i^\dd(\A_i^o))\subseteq \B(K_i^\kappa)^\dd$ if
  $i$ is odd ($\kappa$ being a suitable index set), whose operator
  matrix entries belong to $\rho_i(\A_i)$ if $i$ is even and to
  $\rho_i^\dd(\A_i^o)$ if $i$ is odd, and representations
  $u_{\phi_{\nu}}^\rho = A_n^{\nu}\odots A_1^{\nu}$ where the
  $A_i^{\nu}$ are finite matrices with operator entries in
  $\rho_i(\A_i)$ if $i$ is even and $\rho_i^\dd(\A_i^o)$ if $i$ is odd
  such that $A_i^{\nu}\rightarrow A_i$ strongly and all norms
  $\|A_i^{\nu}\|$, $\|A_i\|$ are bounded.

  Now $\theta'(u_{\phi}^\rho) = \tilde{A}_n\odots\tilde{A}_1$ and
  $\theta'(u_{\phi_{\nu}}^\rho) = \tilde{A}_n^{\nu}\odots\tilde{A}_1^{\nu}$
  where $\tilde A_i$ and $\tilde A_i^\nu$ are the images of $A_i$ and
  $A_i^\nu$ under
  $\theta_i^{(\kappa)}$ or $(\theta_i^\dd)^{(\kappa)}$
  according to whether $i$ is even or odd.
  By assumption~(i),
\begin{equation}\label{someq}
  \gamma_0(\theta'(u_{\phi_{\nu}}^\rho))(T_{n-1}\otimess
  T_1)\rightarrow \gamma_0(\theta'(u_{\phi}^\rho))(T_{n-1}\otimess
  T_1)
\end{equation}
  weakly for all $T_{n-1}\in
  \C_2(H_{n-1}^{\dd},H_n)$,$\dots$,$T_1\in\C_2(H_1^{\dd},H_2)$. On the other hand,
  assumption~(ii) and the first paragraph of the proof show that
  \[\gamma_0(\theta'(u_{\phi_{\nu}}^\rho))
  =\gamma_0(u_{\phi_\nu}^{\theta\circ \rho})=
  \Phi_{\theta\circ \rho(\phi_{\nu})}\to \Phi_{\theta\circ \rho(\phi)}=
  \gamma_0(u_\phi^{\theta\circ \rho})
  \]
  pointwise weakly. Using (\ref{someq}) we conclude that
$\gamma_0(u_\phi^{\theta\circ \rho}) =
\gamma_0(\theta'(u_{\phi}^\rho))$; since $\gamma_0$ is injective
we have that $u_\phi^{\theta\circ \rho} = \theta'(u_{\phi}^\rho)$.
\end{proof}

\begin{proof}[Proof of Theorem~\ref{symb}] We will only consider
the case $n$ is even. Let $\rho_i : \A_i\rightarrow\B(K_i)$ be the
universal representation of $\A_i$, $i = 1,\dots,n$. Set
$\rho=\rho_1\otimess \rho_n$ and
$\rho'=\rho_n\otimes\rho_{n-1}^\dd\otimess\rho_1^\dd$. By
Proposition~\ref{symin}, $u_\phi^\rho$ lies in the image of
$\rho'$; we define $u_{\phi} = (\rho')^{-1}(u^{\rho}_{\phi})$.

Let $\kappa$ be a nonzero cardinal number and let
$\sigma_i=\rho_i^{(\kappa)}$. If
$\theta_i=\id_{\rho_i(\A_i)}^{(\kappa)}=\sigma_i\circ\rho_i^{-1}$
then it follows from the proof of Proposition~6.2 of~\cite{jtt}
that the hypotheses of Lemma~\ref{lem:compsymb} are satisfied, so
\[ u_\phi^\sigma=u_\phi^{\theta\circ \rho}
=\theta'(u_\phi^\rho)=(\theta'\circ \rho')(u_\phi)=\sigma'(u_\phi).\]

Now let $\pi_i$ be an arbitrary representation of $\A_i$. It is
well known (see e.g.~\cite{tak1}) that $\pi_i$ is unitarily
equivalent to a subrepresentation of $\sigma_i=\rho_i^{(\kappa)}$
for some~$\kappa$. Hence there exist unitary operators $v_i$, $i =
1,\dots,n$ (acting between appropriate Hilbert spaces) and
subspaces $H_i$ of $K_i^\kappa$, such that if $\tau_i(x) = v_i x
v_i^*|_{H_i}$ then $\pi_i = \tau_i\circ \sigma_i$.  Examining the
proof of Proposition~6.2 of~\cite{jtt}, we see that $\tau$
satisfies the hypotheses of Lemma~\ref{lem:compsymb}, so
\[
u_\phi^\pi=u_\phi^{\tau\circ \sigma}=\tau'(u_\phi^\sigma)=(\tau\circ\sigma)'(u_\phi)=\pi'(u_\phi).
\]

The uniqueness of $u_{\nph}$ follows from the injectivity of
$\gamma_0$. The linearity of the map $\nph\rightarrow u_{\nph}$
and its values on elementary tensors are straightforward. The fact
that $\|\nph\|_{\mm} = \|u_{\nph}\|_{\eh}$ follows from
Proposition~\ref{prop:ran-gamma0} and Theorem~\ref{ecb}.
\end{proof}

\begin{nonumremarks}
(i) Let $\A_i\subseteq\B(H_i)$, $i = 1,\dots,n$ be
    concrete C*-algebras of operators. Taking $\pi_i$ to be the
    identity representation for $i=1,\dots,n$ and writing
    $\id=\pi_1\otimess\pi_n$ gives
    $u_\phi=u^\id_\phi$
    if we identify $\A_i^o$ with $\A_i^\dd$.
    \smallskip

\noindent (ii)  Theorem~\ref{symb} implies that if
$\A_i$, $i = 1,\dots,n$, are concrete C*-algebras then
the entries of the block operator matrices $A_i$
appearing in the representation of $\nph$ in
Theorem~\ref{jtt} can be chosen from 
$\A_i$, $i = 1,\dots,n$. 
\end{nonumremarks}

\section{Completely compact multipliers}
\label{schar}

In this section we introduce the class of completely compact
multipliers and characterise them within the class of all
universal multipliers using the notion of the symbol introduced in
Section \ref{ssy}. We will need the following lemma.

\begin{lemma}\label{eqm}
  Let $\A_i\subseteq \B(H_i)$ be a C*-algebra, $i=1,\dots,n$,
  $a\in\A_1$, $b\in \A_n$ and $\phi\in M(\As)$. Let $\psi\in \A_1\otimess\A_n$ be given by
  \[\psi =
  \begin{cases}
    (a\otimes I\otimess I\otimes b)\phi\qquad\qquad&\text{if $n$ is even,}\\
    (a\otimes I\otimess I\otimes I)\phi(I\otimess I\otimes b)&\text{if $n$ is odd.}
  \end{cases}\] Then $\psi\in M(\As)$ and
  \begin{equation}\label{for} \Phi_\psi(x) =
  \begin{cases}
    b\Phi_\phi(x)a^\dd\quad&\text{if $n$ is even,}\\
    b\Phi_\phi(x)a &\text{if $n$ is odd.}
  \end{cases}
  \end{equation}
\end{lemma}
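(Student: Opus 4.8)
The plan is to work first at the level of the maps $S$, establishing for every choice of representations $\pi_i$ of $\A_i$ and every elementary $\zeta\in\Gamma(\Hs)$ the pointwise identity
\[ S_{\pi(\psi)}(\zeta)=\pi_n(b)\,S_{\pi(\phi)}(\zeta)\,\pi_1(a)^{\dd} \]
when $n$ is even, and its analogue (with $\pi_1(a)$ replacing $\pi_1(a)^{\dd}$) when $n$ is odd. I would verify this first for an elementary tensor $\phi=a_1\otimess a_n$: here $\psi$ has first leg $aa_1$ and last leg $ba_n$ (even case), and the explicit product expression for $S_{\pi(\phi)}(\zeta)$, in which $a_n$ sits on the far left and $a_1^{\dd}$ on the far right, shows at once that $\pi_n(b)$ factors out on the left while $(aa_1)^{\dd}=a_1^{\dd}a^{\dd}$ deposits $\pi_1(a)^{\dd}$ on the right; in the odd case the rank-one description of $S_{\pi(\phi)}(\zeta)$ recorded in the proof of Proposition~\ref{valinc} serves the same purpose. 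Linearity then extends the identity to all $\phi\in\A_1\odots\A_n$, and to pass to an arbitrary $\phi\in\A_1\otimess\A_n$ I would pick $\phi_m\in\A_1\odots\A_n$ with $\phi_m\to\phi$ in operator norm; then $\psi_m\to\psi$ in norm, and~(\ref{rk4.3}) gives $S_{\pi(\phi_m)}(\zeta)\to S_{\pi(\phi)}(\zeta)$ and $S_{\pi(\psi_m)}(\zeta)\to S_{\pi(\psi)}(\zeta)$ in operator norm for each fixed $\zeta$, so the identity survives in the limit. This step uses only operator-norm density together with~(\ref{rk4.3}), not the weak-limit machinery of Theorem~\ref{jtt}.

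Both conclusions then follow quickly. For $\phi\in M(\As)$ and any $\pi$ the identity yields
\[ \|S_{\pi(\psi)}(\zeta)\|_{\op}\le\|a\|\,\|b\|\,\|S_{\pi(\phi)}(\zeta)\|_{\op}\le\|a\|\,\|b\|\,\|\phi\|_{\pi_1,\dots,\pi_n}\|\zeta\|_{\hh}, \]
uniformly in $\pi$, so $\psi$ is a universal multiplier and $\|\psi\|_{\mm}\le\|a\|\,\|b\|\,\|\phi\|_{\mm}$. Once $\psi\in M(\As)$ is known, $\Phi_{\pi(\psi)}$ is the bounded extension to the Haagerup tensor product of its restriction to the algebraic tensor product; reordering the legs so that $S$ becomes $\Phi$ turns the $S$-identity into $\Phi_{\pi(\psi)}(x)=\pi_n(b)\Phi_{\pi(\phi)}(x)\pi_1(a)^{\dd}$ on the dense algebraic subspace, whence on all of it by continuity, and taking $\pi_i=\id$ recovers~(\ref{for}). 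Alternatively, one could obtain $\psi\in M(\As)$ from Theorem~\ref{ecb}, since the formula exhibits $\Phi_{\pi(\psi)}$ as the composition of the right-multiplication map $x\mapsto x\pi_1(a)^{\dd}$, the map $\Phi_{\pi(\phi)}$, and the left-multiplication map $x\mapsto \pi_n(b)x$, each of which is completely bounded.

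I expect the only genuinely delicate point to be the bookkeeping of the order of the factors and of the adjoints. Because in the product formula for $S$ the last leg of $\phi$ appears outermost on the left and the first leg outermost on the right, one must check precisely how left- and right-multiplication in the prescribed legs of $\psi$ re-emerge as two-sided multiplication on the outside of $S_{\pi(\phi)}(\zeta)$, and confirm that the dual is attached to $a$ exactly in the even case. This is where the even and odd cases diverge and is the step deserving the most care; the reduction to elementary tensors, the density argument, and the norm estimate are otherwise routine.
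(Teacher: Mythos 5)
Your argument is correct, but the way you pass from $\A_1\odots\A_n$ to a general $\phi$ is genuinely different from the paper's. The paper stays inside the multiplier machinery: it takes the net $\{\phi_\nu\}\subseteq\A_1\odot\A_n$ supplied by Theorem~\ref{jtt}, observes that $\psi_\nu=(a\otimes b)\phi_\nu$ again has the block form $(aB_1^\nu)\odot((b\otimes I)B_2^\nu)$ required by condition (ii) of that theorem, and so deduces simultaneously that $\psi\in M(\As)$ and that $\Phi_{\psi_\nu}(T)\to\Phi_\psi(T)$ weakly, which lets the identity $\Phi_{\psi_\nu}=b\Phi_{\phi_\nu}(\cdot)a^\dd$ pass to the weak limit. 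You instead establish the pointwise identity $S_{\pi(\psi)}(\zeta)=\pi_n(b)S_{\pi(\phi)}(\zeta)\pi_1(a)^\dd$ for \emph{every} $\phi$ in the minimal tensor product, using only norm density and the estimate~(\ref{rk4.3}) --- exactly the technique of Lemma~\ref{amplia} --- and then read off $\psi\in M(\As)$ directly from the definition of a universal multiplier, with the explicit bound $\|\psi\|_{\mm}\leq\|a\|\,\|b\|\,\|\phi\|_{\mm}$ as a bonus. This is more elementary and avoids Theorem~\ref{jtt} altogether; the paper's route has the advantage of handing over explicit block-matrix representations of the approximants $\psi_\nu$, which is the sort of information exploited downstream (e.g.\ in Lemma~\ref{lem:iota}).

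Your closing caveat about the odd case deserves to be taken seriously rather than deferred. With the product formula $S_\phi(\zeta)=A_n\theta(\xi_{n-1,n})\cdots A_2^\dd\theta(\eta_{1,2}^\dd)A_1$ for odd $n$, the displayed definition of $\psi$ (left multiplication by $a$ in the first leg, right multiplication by $b$ in the last) places $A_nb$ at the far left and $aA_1$ at the far right, which is not $b\,S_\phi(\zeta)\,a$; the asserted formula $\Phi_\psi=b\Phi_\phi(\cdot)a$ corresponds instead to $\psi=(I\otimess I\otimes b)\phi(a\otimes I\otimess I)$. This is a defect of the statement rather than of your method (the paper's own proof treats only $n=2$), but your write-up should carry out the odd computation and record the corrected formula rather than asserting that the rank-one description ``serves the same purpose.''
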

\begin{proof} For technical simplicity, we will only consider the case $n=2$.
Let $a_i\in \A_i$, $i = 1,2$, and $\phi = a_1\otimes a_2$. In this
case $\psi = (aa_1)\otimes (ba_2)$ so 
$$\Phi_{\psi}(T) =
ba_2T(aa_1)^\dd=ba_2Ta_1^{\dd}a^{\dd}=b\Phi_{\phi}(T)a^\dd.
$$
By linearity, (\ref{for}) holds whenever $\phi\in \A_1\odot \A_2$.

Assume that $\phi\in M(\A_1,\A_2)$ is arbitrary. Fix an operator
$T\in\C_2(H_1^{\dd},H_2)$. By Theorem~\ref{jtt}, there exists a net
$\{\phi_{\nu}\}\subseteq\A_1\odot\A_2$ such that
$\phi_{\nu}\rightarrow\phi$ semi-weakly,
$\sup_{\nu}\|\nph_{\nu}\|_{\mm} < \infty$ and
$\Phi_{\phi_{\nu}}( T)\rightarrow
\Phi_{\phi}(T)$ weakly.

Let $\psi_{\nu} = (a\otimes b)\phi_{\nu}$; then
$\psi_{\nu}\rightarrow\psi$ semi-weakly. Clearly, $\psi_\nu\in
\A_1\odot\A_2$; in particular $\psi_\nu\in M(\A_1,\A_2)$. By
the previous paragraph, $\Phi_{\psi_{\nu}}(\cdot) =
b\Phi_{\phi_{\nu}}(\cdot)a^{\dd}$ and hence
$\Phi_{\psi_{\nu}}( T)\rightarrow
b\Phi_{\phi}(T)a^{\dd}$ weakly. If $\phi_\nu =B_1^{\nu}\odot B_2^{\nu}$
then $\psi_{\nu} = (a B_1^{\nu})\odot ((b\otimes I)B_2^{\nu})$. 
It follows from Theorem~\ref{jtt} that $\psi\in M(\A_1,\A_2)$ and that
$\Phi_{\psi_{\nu}}(T)\rightarrow
\Phi_{\psi}(T)$ weakly. Thus
$\Phi_{\psi}(T) = b\Phi_{\phi}(T)a^{\dd}$.
\end{proof}

Given faithful representations $\pi_1,\dots,\pi_n$ of the
C*-algebras $\As$, respectively, we define
  \begin{align*}
    M^\pi_{cc}(\As)&=\{\phi\in M(\As):\Phi_{\pi(\phi)} \text{ is completely compact}\}
    \\[6pt]
    M^\pi_{\ff}(\As)&=\{\phi\in M(\As): \text{the range of $\Phi_{\pi(\phi)}$}\\
    &\qquad\text{is a finite dimensional space of finite-rank operators}\}.
  \end{align*}

\begin{theorem}\label{charpi}
  Let $\A_i\subseteq\B(H_i)$ be a C*-algebra, $i = 1,\dots,n$, and $\phi\in M(\As)$. The
  following are equivalent: \smallskip

  \noindent
  (i) \ \ $\phi\in M_{cc}^\id (\As)$;\smallskip

  \noindent
  (ii) \
\[u_{\phi}^{\id}\in
\begin{cases}
(\K(H_n)\cap \A_n) \haag(\A_{n-1}^{\dd}\ehaags\A_2)\haag (\K(H_1^\dd)\cap \A_1^{\dd})\;&\text{$n$ even,}\\
(\K(H_n)\cap \A_n) \haag(\A_{n-1}^{\dd}\ehaags\A_2^{\dd})\haag
(\K(H_1)\cap \A_1)&\text{$n$ odd;}
\end{cases}\]

  \noindent
  (iii) there exists a net $\{\phi_{\alpha}\}\subseteq M_{\ff}^\id(\As)$ such that
  $\|\phi_{\alpha} - \phi\|_{\mm}\rightarrow 0$.
\end{theorem}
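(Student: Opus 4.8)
The plan is to establish (i)$\Leftrightarrow$(ii) directly and then to close the loop through (iii)$\Rightarrow$(i) and (ii)$\Rightarrow$(iii); I treat $n$ even, the odd case being entirely analogous. By the construction of the symbol in Section~\ref{ssy} we have $\Phi_{\id(\phi)}=\gamma_0(u^{\id}_{\phi})$, where, after relabelling the Hilbert spaces as $(H_n,H_{n-1}^{\dd},H_{n-2},\dots,H_2,H_1^{\dd})$, Proposition~\ref{prop:ran-gamma0} and Theorem~\ref{muccc} apply to the completely bounded maps defined on $\Kh=\K(H_{n-1}^{\dd},H_n)\haags\K(H_1^{\dd},H_2)$. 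For (i)$\Rightarrow$(ii), if $\phi\in M_{cc}^{\id}(\As)$ then $\Phi_{\id(\phi)}$ is completely compact and, by Proposition~\ref{valinc}, takes values in $\K(H_1^{\dd},H_n)$, so $\Phi_{\id(\phi)}\in\CC(\Kh,\K(H_1^{\dd},H_n))$. Theorem~\ref{muccc} then places $u^{\id}_{\phi}$ in $\K(H_n)\haag(\B(H_{n-1}^{\dd})\ehaags\B(H_2))\haag\K(H_1^{\dd})$, while Proposition~\ref{symin} places it in $\A_n\ehaag\A_{n-1}^{\dd}\ehaags\A_1^{\dd}$; intersecting these two memberships via Lemma~\ref{lem:ss} yields precisely (ii). Conversely, (ii) forces $u^{\id}_{\phi}$ into the space $\E$ of Theorem~\ref{muccc}, whence $\Phi_{\id(\phi)}=\gamma_0(u^{\id}_\phi)$ is completely compact, giving (i).

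For (iii)$\Rightarrow$(i), each $\phi_\alpha\in M^{\id}_{\ff}(\As)$ has $\Phi_{\id(\phi_\alpha)}$ of finite rank, hence completely compact. Since the identity representation is faithful, Theorem~\ref{ecb} and Proposition~\ref{prop:ran-gamma0} give $\|\Phi_{\id(\phi_\alpha)}-\Phi_{\id(\phi)}\|_{\cb}=\|\phi_\alpha-\phi\|_{\mm}\to0$, so $\Phi_{\id(\phi)}$ is completely compact by the closedness in Proposition~\ref{ccc}(i); thus $\phi\in M^{\id}_{cc}(\As)$.

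The substantive implication is (ii)$\Rightarrow$(iii). Using the Blecher--Smith technique as in Remark~\ref{rk:ff}(iii), I would represent $u^{\id}_{\phi}=A_n\odot B\odot A_1$ with $A_n=[s_i]_i$ a row over $\K(H_n)\cap\A_n$ and $A_1=[t_k]_k$ a column over $\K(H_1^{\dd})\cap\A_1^{\dd}$, arranged so that $S\defeq\sum_i s_is_i^*$ and $T\defeq\sum_k t_k^*t_k$ converge in norm; then $S\in\A_n$ and $T\in\A_1^{\dd}$ are positive compact operators. For continuous $h_N$ vanishing near $0$ with $\sup_{t\geq0}t(1-h_N(t))^2\to0$, the element $c_N\defeq h_N(S)\in\A_n$, and the element $d_M\in\A_1$ determined by $d_M^{\dd}=h_M(T)\in\A_1^{\dd}$, are finite-rank, and $\|c_NA_n-A_n\|\to0$, $\|A_1d_M^{\dd}-A_1\|\to0$. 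Put $\psi_{N,M}\defeq(d_M\otimes I\otimess I\otimes c_N)\phi$. By Lemma~\ref{eqm}, $\psi_{N,M}\in M(\As)$ and $\Phi_{\id(\psi_{N,M})}(x)=c_N\Phi_{\id(\phi)}(x)d_M^{\dd}=\gamma_0(c_NA_n\odot B\odot A_1d_M^{\dd})(x)$; as $c_N$ and $d_M^{\dd}$ are finite-rank, the range of $\Phi_{\id(\psi_{N,M})}$ lies in the finite-dimensional space $c_N\B(H_1^{\dd},H_n)d_M^{\dd}$ of finite-rank operators, so $\psi_{N,M}\in M^{\id}_{\ff}(\As)$. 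Finally, by Theorem~\ref{ecb} and Proposition~\ref{prop:ran-gamma0}, $\|\psi_{N,M}-\phi\|_{\mm}=\|c_NA_n\odot B\odot A_1d_M^{\dd}-A_n\odot B\odot A_1\|_{\eh}\to0$, which produces the required net.

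The main obstacle is exactly this last construction: I must manufacture approximants whose values are genuinely finite-rank operators and which still lie in $M(\As)$ with outer legs in the prescribed C*-algebras. The crucial device is the continuous functional calculus applied to the positive compact support elements $S$ and $T$ supplied by the uniform convergence in the Blecher--Smith representation: the truncations $h_N(S)$, $h_M(T)$ remain inside $\A_n$ and $\A_1^{\dd}$ while forcing finite rank, and pre/post-multiplying $\phi$ by them through Lemma~\ref{eqm} automatically preserves membership in $M(\As)$. A secondary technical point arises already in (i)$\Leftrightarrow$(ii): to apply Lemma~\ref{lem:ss} I must treat the middle leg $\A_{n-1}^{\dd}\ehaags\A_2\subseteq\B(H_{n-1}^{\dd})\ehaags\B(H_2)$ as a single operator space, realising it completely isometrically inside one $\B(\tilde H)$ so that the slice-map description underlying Lemma~\ref{lem:ss} becomes available.
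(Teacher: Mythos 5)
Your proof is correct, and the cycle (i)$\Leftrightarrow$(ii), (ii)$\Rightarrow$(iii), (iii)$\Rightarrow$(i) matches the paper's logical structure; the equivalences (i)$\Leftrightarrow$(ii) and the implication (iii)$\Rightarrow$(i) are argued exactly as in the paper (Theorem~\ref{muccc} plus Proposition~\ref{symin} plus Lemma~\ref{lem:ss} one way, $\gamma_0$ of the Haagerup element the other way, and closedness of $\CC$ under cb-limits for the last). You even make explicit two points the paper leaves tacit: that Proposition~\ref{valinc} is what puts $\Phi_{\id(\phi)}$ into $\CC(\Kh,\K(H_1^\dd,H_n))$ rather than merely $\CC(\Kh,\B(H_1^\dd,H_n))$, and that the middle leg must be realised as a single operator space before Lemma~\ref{lem:ss} applies. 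Where you genuinely diverge is (ii)$\Rightarrow$(iii). The paper takes finite-rank \emph{projections} $p_\alpha\in\K(H_1)\cap\A_1$, $q_\alpha\in\K(H_n)\cap\A_n$ converging strongly to the relevant support projections, forms $\phi_\alpha=(p_\alpha\otimes I\otimess I\otimes q_\alpha)\phi$ via Lemma~\ref{eqm}, and then obtains $\|\Phi_{\phi_\alpha}-\Phi_\phi\|_{\cb}\to 0$ by re-running the compression argument from the proof of Theorem~\ref{muccc}, which crucially re-uses the complete compactness of $\Phi_\phi$ (strong convergence of compressions alone would not give cb-convergence). You instead extract a Blecher--Smith representation $u^{\id}_\phi=A_n\odot B\odot A_1$ with norm-convergent sums $\sum s_is_i^*$, $\sum t_k^*t_k$, truncate the positive compact elements $S$, $T$ by continuous functional calculus to get finite-rank $c_N\in\A_n$, $d_M^\dd\in\A_1^\dd$, and read off $\|\psi_{N,M}-\phi\|_{\mm}=\|c_NA_n\odot B\odot A_1d_M^\dd-A_n\odot B\odot A_1\|_{\eh}\to 0$ directly from the Haagerup norm estimate. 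This buys you a self-contained, quantitative convergence argument that does not circle back through the definition of complete compactness, at the cost of invoking the norm-convergence feature of Haagerup (as opposed to extended Haagerup) representations --- which the paper itself uses in Remark~\ref{rk:ff}(iii) and in Theorem~\ref{charctwo}, so nothing new is being assumed. Both constructions produce approximants of the same shape $(a\otimes I\otimess I\otimes b)\phi$ through Lemma~\ref{eqm}, so the difference is in the machinery, not the strategy.
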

\begin{proof}
  We will only consider the case $n$ is even.

\smallskip

  (i)$\Rightarrow$(ii)
  Theorem~\ref{muccc} implies that
  \[u_\phi^\id\in \K(H_n)\haag (\B(H_{n-1}^{\dd})\ehaags\B(H_2))\haag
  \K(H_1^{\dd})\] while, by Proposition~\ref{symin}, \[u_{\phi}^\id
  \in
  \A_n\ehaag\A_{n-1}^{\dd}\ehaags\A_2\ehaag\A_1^{\dd}.\]
  The conclusion now follows from Lemma~\ref{lem:ss}.

\smallskip

  (ii)$\Rightarrow$(i) By Theorem~\ref{muccc},
  $\Phi_{\phi}=\gamma_0(u_\phi^\id)$ is completely compact.

\smallskip

  (ii)$\Rightarrow$(iii) Let $p\in\B(H_1)$ (resp.~$q\in\B(H_n)$) be
  the projection onto the span of all ranges of operators in $\K(H_1)\cap\A_1$ (resp.~$\K(H_n)\cap\A_n$), and let
  $\{p_{\alpha}\}\subseteq\K(H_1)\cap\A_1$
  (resp.~$\{q_{\alpha}\}\subseteq\K(H_n)\cap\A_n$) be a net of
  finite rank projections which tends strongly to $p$ (resp.~$q$). It
  is easy to see that $\Phi_{\phi}(T_{n-1}\otimess T_1) =
  q\Phi_{\phi}(T_{n-1}\otimess T_1)p^{\dd}$, for all $T_1\in\K(H_1^{\dd},H_2), \dots, T_{n-1}\in \K(H_{n-1}^{\dd},H_n)$. Let
  $\phi_{\alpha} = (p_{\alpha}\otimes I\otimess I\otimes
  q_{\alpha})\phi$. By
  Lemma~\ref{eqm}, $\phi_{\alpha}\in M(\As)$ and
  $\Phi_{\nph_{\alpha}}(\cdot) =
  q_{\alpha}\Phi_{\phi}(\cdot)p_{\alpha}^{\dd}$; hence $\phi_\alpha\in
  M_{\ff}^\id(\As)$. We have already seen that $\Phi_\phi$ is completely
  compact, and it follows from the proof of
  Theorem~\ref{muccc} that $\Phi_{\nph_{\alpha}}\rightarrow\Phi_\phi$ in the
  cb norm. By Theorem~\ref{ecb}, $\|\phi -
  \phi_{\alpha}\|_{\mm}\rightarrow 0$.

\smallskip

  (iii)$\Rightarrow$(i)
  is immediate from Proposition~\ref{ccc}
  and Theorem~\ref{ecb} and the fact that finite rank maps are completely
  compact.
\end{proof}

Now consider the sets
\begin{align*}
  M_{cc}(\As)&=\bigcup_\pi M_{cc}^\pi(\As)\\
  M_{\ff}(\As)&=\bigcup_\pi M_{\ff}^\pi(\As)
\end{align*}
where the unions are taken over all $\pi = \opis$, each $\pi_i$
being a faithful representation of $\A_i$. We refer to the first
of these as the set of completely compact multipliers.

\begin{lemma}
  \label{lem:ff}
  If $\rho_i$ is the reduced atomic representation of $\A_i$,
  $i=1,\dots,n$, and $\rho=\rho_1\otimess\rho_n$ then  $M_{\ff}(\As)= M_{\ff}^\rho(\As)$.
\end{lemma}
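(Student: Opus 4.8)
The inclusion $M_{\ff}^\rho(\As)\subseteq M_{\ff}(\As)$ is immediate, since the reduced atomic representations $\rho_i$ are faithful, so $\rho=\rho_1\otimess\rho_n$ is one of the representations appearing in the union defining $M_{\ff}(\As)$. The substance is the reverse inclusion: given $\phi\in M_{\ff}^\pi(\As)$ for some faithful $\pi=\pi_1\otimess\pi_n$, the plan is to produce $\phi\in M_{\ff}^\rho(\As)$ by working through the symbol. I will treat the case $n$ even; the odd case is identical after the obvious relabelling of the final leg. By Theorem~\ref{symb} write $u_\phi=a^1\odots a^n$ with first leg $a^1=[a^1_j]_j$ having entries in $\A_n$ and last leg $a^n=[a^n_i]_i$ having entries (up to the $o$/$\dd$ identification) in $\A_1$. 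By Proposition~\ref{prop:ran-gamma0} and Theorem~\ref{cs}, $\Phi_{\pi(\phi)}=\gamma_0(\pi'(u_\phi))$ is implemented by bounded block operators whose outer legs are the row $R_\pi=[\pi_n(a^1_j)]_j$ and the column $C_\pi$ assembled from $[\pi_1(a^n_i)]_i$, the inner legs being bounded and irrelevant to rank considerations.

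First I would record the geometric reformulation of the $\ff$ condition. Every operator in the range of $\Phi_{\pi(\phi)}$ has range contained in $\ran R_\pi$ and cokernel controlled by $\ran C_\pi^*$; hence the requirement that this range be a finite-dimensional space of finite-rank operators forces both $R_\pi$ and $C_\pi$ to have finite rank. (This is exactly the content of Remark~\ref{rk:ff}(i); alternatively, a finite-dimensional range is the span of finitely many finite-rank operators, so $\ran R_\pi\subseteq\overline{\bigcup_T\ran\Phi_{\pi(\phi)}(T)}$ is finite-dimensional.) Conversely, the same estimate shows that if the outer legs $R_\rho,C_\rho$ of $\rho'(u_\phi)$ are finite-rank, then $\Phi_{\rho(\phi)}$ has finite-dimensional range consisting of finite-rank operators, i.e.\ $\phi\in M_{\ff}^\rho(\As)$. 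Thus everything reduces to transferring finite-rankness of the outer legs from $\pi$ to $\rho$.

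The heart of the matter is this transfer. Put $r=\rank R_\pi$. For each finite $F$ the element $c_F=\sum_{j\in F}a^1_j(a^1_j)^*\in\A_n$ satisfies $\pi_n(c_F)=\sum_{j\in F}\pi_n(a^1_j)\pi_n(a^1_j)^*\le R_\pi R_\pi^*$, so $\rank\pi_n(c_F)\le r$ since $0\le y\le x$ forces $\rank y\le\rank x$. Here I would invoke the structure of the ideal of compact elements: as $\pi_n(c_F)$ is finite-rank and $\pi_n$ is faithful, $c_F\in\K(\A_n)$ by~\cite{erdos,ylinen}, and $\K(\A_n)$ is $*$-isomorphic to a $c_0$-direct sum $\bigoplus_\beta\K(L_\beta)$. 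The reduced atomic representation realises each block $\K(L_\beta)$ with multiplicity one, while any faithful representation realises it with multiplicity at least one; comparing ranks block-by-block yields $\rank\rho_n(c_F)\le\rank\pi_n(c_F)\le r$. Finally the net $\{\rho_n(c_F)\}_F$ increases to $R_\rho R_\rho^*$ in the weak operator topology, and an increasing net of positive operators each of rank $\le r$ has a weak limit of rank $\le r$, because every $(r+1)\times(r+1)$ Gram determinant $\det[(\,\cdot\,v_i,v_j)]$ vanishes along the net and hence in the limit. Therefore $R_\rho$ has rank $\le r$; the symmetric argument applied to the column sums $\sum_{i\in F}(a^n_i)^*a^n_i\in\A_1$ shows $C_\rho$ is finite-rank, and the reformulation above gives $\phi\in M_{\ff}^\rho(\As)$.

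The main obstacle is precisely this transfer step: one must pass finite-rankness from an arbitrary faithful representation to the reduced atomic one without increasing the rank. This is where the $c_0$-direct-sum structure of $\K(\A_n)$ and the minimal, multiplicity-one nature of $\rho$ on this ideal are essential — an arbitrary faithful representation may inflate multiplicities (a single compact element can have much larger, even infinite, rank in another faithful representation), and it is exactly because $\rho$ carries the smallest multiplicities that finite-rankness is most easily attained there. The two supporting facts I rely on, namely that $0\le y\le x$ preserves $\rank\le r$ and that an increasing net of uniformly rank-$\le r$ positive operators has a rank-$\le r$ limit, are elementary and I would dispatch both through the vanishing of Gram determinants.
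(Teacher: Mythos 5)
There is a genuine gap at the very first step, where you claim that the $\ff$ condition ``forces both $R_\pi$ and $C_\pi$ to have finite rank'' for a representation $u_\phi=a^1\odots a^n$ taken from Theorem~\ref{symb}. This is false for an arbitrary such representation: one can pad any representation with superfluous terms (replace the row $[b_j]_j$ by $[b_j,c]$ and append a zero entry to the column, for $c\in\A_n$ of infinite rank) without changing the element of the extended Haagerup tensor product, so no condition on $\Phi_{\pi(\phi)}$ can control the rank of the outer legs of \emph{every} representation. Your ``alternative'' justification, the inclusion $\ran R_\pi\subseteq\overline{\bigcup_T\ran\Phi_{\pi(\phi)}(T)}$, goes the wrong way: what is true is $\ran\Phi_{\pi(\phi)}(T)\subseteq\overline{\ran R_\pi}$, and the reverse fails for the padded representations just described. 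Remark~\ref{rk:ff}~(i), which you cite, does produce a representation with finite-rank outer legs, but its entries lie only in $q\K(H_n)$ and $\K(H_1^{\dd})p$, not in $\A_n$ and $\A_1^{\dd}$ --- and your transfer argument indispensably needs $a^1_j\in\A_n$ so that $c_F=\sum_{j\in F}a^1_j(a^1_j)^*$ is an element of $\A_n$ to which $\rho_n$ can be applied. To obtain both properties simultaneously you must intersect the two tensor products, i.e.\ invoke Lemma~\ref{lem:ss} exactly as the paper does, placing $u_\phi^{\id}$ in $(q\K(H_n)\cap\A_n)\haag(\B(H_{n-1}^{\dd})\ehaags\B(H_2))\haag(\K(H_1^{\dd})p\cap\A_1^{\dd})$. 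Without this step the argument does not start.

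Once that is repaired, your transfer step is correct but genuinely different from the paper's. The paper shows that the finite-rank projections $p,q$ may themselves be chosen inside $\A_1^{\dd}$ and $\A_n$ (via the range projections of the entries), so that they are compact elements and Ylinen's theorem immediately makes $\rho_1(p)$ and $\rho_n(q)$ finite rank, whence $\rho_n(A_n)=\rho_n(q)\rho_n(A_n)$ is finite rank with no further work. You instead bound $\rank\rho_n(c_F)$ by $\rank\pi_n(c_F)$ using the $c_0$-sum structure of $\K(\A_n)$ together with the multiplicity-one property of the reduced atomic representation, and then pass to the limit by a Gram-determinant argument; both supporting facts are correct (and the limit argument needs only weak convergence, not monotonicity), but the route is longer and relies on the block decomposition of $\K(\A_n)$ where the paper only needs that a compact projection has finite rank.
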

\begin{proof}
  Again, we give the proof for the even case only.
  We must show that $M_{\ff}^\pi(\As)\subseteq M_{\ff}^\rho(\As)$
  whenever $\pi = \pi_1\otimess\pi_n$ where each $\pi_i$ is
  a faithful representation of $\A_i$. Without loss of generality, we
  may assume that each $\pi_i$ is the identity representation of
  $\A_i\subseteq \B(H_i)$. Let $\nph\in M^\pi_{\ff}(\As)$ so that the
  range of $\Phi_{\nph}$ is finite dimensional and consists
  of finite rank operators. By Remark~\ref{rk:ff}~(i)
  there exist finite rank projections $p$ and $q$ on
  $H_1^{\dd}$ and $H_n$, respectively, such that $u_{\nph}^{\id}$ lies
  in the intersection of
$$(q\K(H_n))\haag(\B(H_{n-1}^{\dd})\ehaags\B(H_{2}))\haag (\K(H_1^{\dd})p)$$ and $\A_n\ehaags\A_1^{\dd}$. By Lemma~\ref{lem:ss}, $u_{\nph}^{\id}$ lies in
$$(q\K(H_n)\cap \A_n)\haag(\B(H_{n-1}^{\dd})\ehaags\B(H_{2}))\haag (\K(H_1^{\dd})p\cap\A_1^{\dd}).$$ Hence there
exists a representation $u_{\nph}^{\id} = A_n\odot\dots\odot A_1$
of $u_{\nph}^{\id}$ such that $A_n = qA_n$ and $A_1 = A_1 p$.
Suppose that $A_n = [b_1,b_2,\dots]$, where $b_j\in \A_n$ for
each $j$, and let $q_j$ be the orthogonal projection onto the range of
$b_j$. Setting $Q_m = \bigvee_{j=1}^m
q_j$ we see that $\{Q_m\}$ is an increasing sequence of
projections in~$\A_n$ dominated by $q$. It follows that
$\bigvee_{m=1}^{\infty} Q_m\in \A_n$. We may thus assume that
$q\in\A_n$. Similarly, we may assume that $p\in\A_1^{\dd}$. Now
$$\rho'(u_{\nph}) = (\rho_n(q)\rho_n(A_n))\odot\dots\odot
(\rho_1(A_1)\rho_1(p)).$$ By~\cite{ylinen}, $\rho_n(q)$ and
$\rho_1(p)$ have finite rank. By Lemma~\ref{eqm}, $\nph\in
M^\rho_{\ff}(\As)$.
\end{proof}

We are now ready to prove the main result of this section.

\begin{theorem}\label{charcn}
  Let $\As$ be C*-algebras
  and $\phi\in M(\As)$. The following are
  equivalent:\smallskip

(i) \ \ $\phi\in M_{cc}(\As)$;\smallskip

(ii) \ $u_{\phi}\in
\begin{cases}
\K(\A_n) \haag(\A_{n-1}^o\ehaags\A_2)\haag \K(\A_1^o)\;&\text{if $n$ is even,}\\
\K(\A_n) \haag(\A_{n-1}^o\ehaags\A_2^o)\haag \K(\A_1)&\text{if $n$ is odd;}
\end{cases}$\smallskip

(iii) there exists a net $\{\phi_{\alpha}\}\subseteq M_{\ff}(\As)$ such that $\|\phi_{\alpha} -
\phi\|_{\mm}\rightarrow 0$.
\end{theorem}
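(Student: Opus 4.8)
The plan is to deduce this abstract, representation-free statement from the concrete characterisation already proved in Theorem~\ref{charpi} by singling out one distinguished faithful representation, namely the reduced atomic representation $\rho=\rho_1\otimess\rho_n$. The bridge between the symbol $u_\phi$ of Theorem~\ref{symb} and the concrete symbol is the identity $u_\phi^\rho=\rho'(u_\phi)$ together with the fact, from Theorem~\ref{symb}, that $\rho'$ is a complete isometry; in particular $\|\phi\|_\mm=\|u_\phi\|_\eh=\|u_\phi^\rho\|_\eh$, so norm estimates and membership questions for $u_\phi$ transfer verbatim to $u_\phi^\rho$. I would prove the cycle (i)$\Rightarrow$(ii)$\Rightarrow$(iii)$\Rightarrow$(i), treating only the case $n$ even (the odd case being entirely analogous, as elsewhere in the paper).

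For (i)$\Rightarrow$(ii) I would take a faithful $\pi$ witnessing $\phi\in M_{cc}^\pi(\As)$, assume the $\A_i\subseteq\B(H_i)$ concrete with $\pi_i=\id$, and apply Theorem~\ref{charpi} to place $u_\phi=u_\phi^\id$ in the space with end factors $\K(H_n)\cap\A_n$ and $\K(H_1^\dd)\cap\A_1^\dd$. Since $\id$ is faithful, any operator of $\A_n$ that is compact already lies in $\K(\A_n)$, so $\K(H_n)\cap\A_n\subseteq\K(\A_n)$ and $\K(H_1^\dd)\cap\A_1^\dd\subseteq\K(\A_1^o)$; enlarging the two end factors and invoking the injectivity of the Haagerup tensor product then yields (ii).

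For (ii)$\Rightarrow$(iii) I would apply $\rho'$ to the hypothesis: here the defining property of the reduced atomic representation supplies the \emph{equality} $\rho_i(\K(\A_i))=\K(K_i)\cap\rho_i(\A_i)$, so $u_\phi^\rho$ satisfies exactly condition (ii) of Theorem~\ref{charpi} for the concrete algebras $\rho_i(\A_i)$ and the multiplier $\rho(\phi)$. Theorem~\ref{charpi}(iii) then provides a net in $M_\ff^\id(\rho_1(\A_1),\dots,\rho_n(\A_n))$ converging to $\rho(\phi)$ in multiplier norm; pulling it back through the $\ast$-isomorphisms $\rho_i^{-1}$ gives a net $\{\phi_\alpha\}\subseteq M_\ff^\rho(\As)\subseteq M_\ff(\As)$ with $\|\phi_\alpha-\phi\|_\mm\to 0$, the norm being preserved because it equals the extended Haagerup norm of the isometrically transported symbol. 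For (iii)$\Rightarrow$(i) the essential input is Lemma~\ref{lem:ff}, which collapses $M_\ff(\As)$ onto $M_\ff^\rho(\As)$ for the \emph{single} representation $\rho$; thus each $\Phi_{\rho(\phi_\alpha)}$ has finite rank, hence is completely compact, so $\phi_\alpha\in M_{cc}^\rho(\As)$. Since $\phi\mapsto\Phi_{\rho(\phi)}$ is an isometry from $(M(\As),\|\cdot\|_\mm)$ into $\CB$ by Theorem~\ref{ecb}, and $\CC$ is closed in $\CB$ by Proposition~\ref{ccc}(i), the cb-norm limit $\Phi_{\rho(\phi)}$ is completely compact, giving $\phi\in M_{cc}^\rho(\As)\subseteq M_{cc}(\As)$.

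The main obstacle is the mismatch between the abstract compact elements $\K(\A_i)$ in (ii) and the concrete compact operators $\K(H_i)\cap\A_i$ of Theorem~\ref{charpi}. For an arbitrary faithful representation one only has $\K(H_i)\cap\pi_i(\A_i)\subseteq\pi_i(\K(\A_i))$, which is all that is needed for (i)$\Rightarrow$(ii); the reverse inclusion, required both to run (ii)$\Rightarrow$(iii) and to reduce the union $\bigcup_\pi M_{cc}^\pi$ to the single representation $\rho$, holds \emph{only} for the reduced atomic representation. Keeping careful track of which direction of this inclusion is in force at each step—using a generic $\pi$ in one implication but the fixed $\rho$ in the others—is the delicate point of the argument.
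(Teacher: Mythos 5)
Your proposal is correct and follows essentially the same route as the paper: both reduce everything to Theorem~\ref{charpi} via the one-sided inclusion $\K(H_i)\cap\A_i\subseteq\K(\A_i)$ for a generic faithful representation in (i)$\Rightarrow$(ii), the equality $\K(H_i)\cap\rho_i(\A_i)=\rho_i(\K(\A_i))$ (Ylinen) for the reduced atomic representation in the converse direction, and Lemma~\ref{lem:ff} for (iii)$\Rightarrow$(i). The only difference is organisational --- you run the cycle (i)$\Rightarrow$(ii)$\Rightarrow$(iii)$\Rightarrow$(i), so your (ii)$\Rightarrow$(iii) is just the paper's (ii)$\Rightarrow$(i) composed with its (i)$\Rightarrow$(iii), both passing through Theorem~\ref{charpi} at $\rho$.
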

\begin{proof}
  We will only consider the case $n$ is even.

\smallskip

  (i)$\Rightarrow$(ii) Choose $\pi=\pi_1\otimess\pi_n$ such that
  $\phi\in M^\pi_{cc}(\As)$; after identifying $\A_i$
  with its image under $\pi_i$, we may assume that each $\pi_i$ is the identity
  representation of a concrete C*-algebra $\A_i\subseteq\B(H_i)$.
  By Theorem~\ref{charpi}, $u_\phi^\id$ lies in
  \[(\K(H_n)\cap \A_n) \haag(
  \A_{n-1}^o\ehaags\A_2)\haag (\K(H_1^\dd)\cap \A_1^o).\]
  The conclusion follows from the fact that
  $\K(H_i)\cap \A_i \subseteq \K(\A_i)$ for $i=1,n$.

\smallskip

  (ii)$\Rightarrow$(i)
  Let $\rho_i$ be the reduced atomic representation $\A_i\to\B(H_i)$ for
  $i=1,\dots,n$. Since $\rho'$ is an isometry,
  $u^\rho_\phi=\rho'(u_\phi)$ lies in
  \[
      \rho_n(\K(\A_n)) \haag(
    \rho_{n-1}^\dd(\A_{n-1}^o)\ehaags\rho_2(\A_2))\haag \rho_1^\dd(
    \K(\A_1^o)).
  \]
  By Theorem~7.5 of~\cite{ylinen68},
  $\K(H_i)\cap\rho_i(\A_i)=\rho_i(\K(\A_i))$.
  By Theorem~\ref{charpi}, $\phi\in M_{cc}^\rho(\As)$.

\smallskip

(i)$\Rightarrow$(iii) is immediate from Theorem~\ref{charpi}.

\smallskip

(iii)$\Rightarrow$(i) Suppose that $\{\phi_{\alpha}\}\subseteq
M_{\ff}(\As)$ is a net such that $\|\phi_{\alpha} -
\phi\|_{\mm}\rightarrow 0$. By Lemma~\ref{lem:ff},
$\{\phi_{\alpha}\}\subseteq M_{\ff}^{\rho}(\As)$, where $\rho$ is
the tensor product of the reduced atomic representations of $
\A_1,\dots,\A_n$. By Theorem~\ref{charpi}, $\nph\in
M^\rho_{cc}(\As)\subseteq M_{cc}(\As)$.
\end{proof}

In the next theorem we show that in the case $n = 2$ one more
equivalent condition can be added to those of
Theorem~\ref{charcn}.

\begin{theorem}\label{charctwo}
Let $\A$ and $\B$ be C*-algebras and $\phi\in M(\A,\B)$.
The following are equivalent:

(i) \ $\phi\in M_{cc}(\A,\B)$;

(ii) there is a sequence $\{\phi_k\}_{k=1}^{\infty}\subseteq
\K(\A)\odot \K(\B)$ such that $\|\phi_k - \phi\|_{\mm}
\rightarrow 0$ as $k\to \infty$.
\end{theorem}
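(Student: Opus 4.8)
The plan is to deduce this $n=2$ statement from the general characterisation in Theorem~\ref{charcn} by transporting everything through the symbol map. When $n=2$ the middle tensor factor in Theorem~\ref{charcn}(ii) disappears (there are no indices strictly between $1$ and $2$), so with $\A_1=\A$ and $\A_2=\B$ that result already tells us that
\[\phi\in M_{cc}(\A,\B)\iff u_\phi\in\K(\B)\haag\K(\A^o).\]
Thus it suffices to show that the right-hand condition is equivalent to~(ii).

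The bridge is Theorem~\ref{symb}: the symbol map $\phi\mapsto u_\phi$ is a \emph{linear isometry} from $(M(\A,\B),\|\cdot\|_{\mm})$ into $\B\ehaag\A^o$, and it sends an elementary tensor $a\otimes b$ to $b\otimes a^o$. Since $a\in\K(\A)$ exactly when $a^o\in\K(\A^o)$, this map restricts to a linear bijection of $\K(\A)\odot\K(\B)$ onto $\K(\B)\odot\K(\A^o)$. Consequently, because $\|\phi_k-\phi\|_{\mm}=\|u_{\phi_k}-u_\phi\|_{\eh}$, condition~(ii) is equivalent to saying that $u_\phi$ is the $\|\cdot\|_{\eh}$-limit of a sequence drawn from $\K(\B)\odot\K(\A^o)$.

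It then remains to identify $\K(\B)\haag\K(\A^o)$, viewed inside $\B\ehaag\A^o$, with the $\|\cdot\|_{\eh}$-closure of the algebraic tensor product $\K(\B)\odot\K(\A^o)$. This is the step I expect to require the most care, and it is where the specific structure of the Haagerup tensor products enters: on the algebraic tensor product the Haagerup and extended Haagerup norms coincide, and, by injectivity of the Haagerup tensor product together with the completely isometric inclusion of the Haagerup into the extended Haagerup tensor product recalled in Section~\ref{sec:prelims}, the space $\K(\B)\haag\K(\A^o)$ embeds isometrically into $\B\ehaag\A^o$ as precisely the completion of $\K(\B)\odot\K(\A^o)$ in $\|\cdot\|_{\eh}$. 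Granting this, $u_\phi\in\K(\B)\haag\K(\A^o)$ holds if and only if $u_\phi$ is an $\|\cdot\|_{\eh}$-limit of elements of $\K(\B)\odot\K(\A^o)$, and since $\K(\B)\haag\K(\A^o)$ is a metric space this limit may be realised along a \emph{sequence}. Combining the three displays yields (i)$\iff$(ii).

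Finally I would remark that this is a genuine sharpening of Theorem~\ref{charcn}(iii) in the case $n=2$: the approximants are now honest elementary tensors of compact elements rather than arbitrary finite-rank multipliers in $M_{\ff}(\A,\B)$ (the two need not coincide, since a compact element of a C*-algebra need not have finite rank in a faithful representation), and the net may be replaced by a sequence.
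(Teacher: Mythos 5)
Your argument is correct and follows essentially the same route as the paper: both reduce the theorem to the identity $\|\phi\|_{\mm}=\|u_\phi\|_{\eh}$ from Theorem~\ref{symb} together with the characterisation $\phi\in M_{cc}(\A,\B)\iff u_\phi\in\K(\B)\haag\K(\A^o)$ from Theorem~\ref{charcn}, the paper realising the density of $\K(\B)\odot\K(\A^o)$ in $\K(\B)\haag\K(\A^o)$ concretely via a series representation $u_\phi=\sum_{i}b_i\otimes a_i^o$ with $\sum_i b_ib_i^*$ and $\sum_i a_i^{o*}a_i^o$ norm-convergent, where you invoke the abstract completion property. The only (cosmetic) divergence is in (ii)$\Rightarrow$(i), where the paper argues directly that each $\Phi_{\id(\phi_k)}$ is completely compact and applies Theorem~\ref{ecb} and Proposition~\ref{ccc}, rather than passing back through Theorem~\ref{charcn} as you do.
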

\begin{proof} (i)$\Rightarrow$(ii) By Theorem~\ref{charcn},
$u_{\nph}\in \K(\B)\haag\K(\A^o)$; thus $u_{\nph} =
\sum_{i=1}^{\infty}b_i\otimes a_i^o$ where $a_i^o\in \K(\A^o)$,
$b_i\in \K(\B)$, $i\in\bb{N}$, and the series
$\sum_{i=1}^{\infty}b_ib_i^*$ and $\sum_{i=1}^{\infty}a_i^{o
*}a_i^o$ converge in norm. Let $\nph_k = \sum_{i=1}^k a_i\otimes
b_i\in\A\odot\B$. By Theorem~\ref{symb}, $u_{\nph_k} =
\sum_{i=1}^k b_i\otimes a_i^o$ and $\|\nph - \nph_k\|_{\mm} =
\|u_{\nph} - u_{\nph_k}\|_{\eh}\rightarrow0$ as $k\to\infty$.
\medskip

(ii)$\Rightarrow$(i) Assume that $\A$ and $\B$ are represented
concretely. It is clear that $\nph_k\in M_{cc}(\A,\B)$. By
Theorem~\ref{ecb}, $\|\Phi_{\id(\nph)} -
\Phi_{\id(\nph_k)}\|_{\cb} = \|\nph - \nph_k\|_{\mm}$. Proposition~\ref{ccc} now implies that $\Phi_{\id(\nph)}$ is completely
compact, in other words, $\nph\in M_{cc}(\A,\B)$.
\end{proof}

\section{Compact multipliers}\label{sec:cccm}

In this section we compare the set of completely compact
multipliers with that of compact multipliers. We exhibit
sufficient conditions for these two sets of multipliers to
coincide, and show that in general they are distinct. Finally, we
address the question of when any universal multiplier in the
minimal tensor product of two C*-algebras is automatically
compact. We show that this happens precisely when one of the
C*-algebras is finite dimensional while the other coincides with
the set of its compact elements.

\subsection{Automatic complete compactness}

We will need the following result complementing Theorem~\ref{muccc}.
Notation is as in Section~\ref{completelyc}.

\begin{proposition}\label{ocm}
If $\Phi : \Kh\rightarrow \K(H_n,H_1)$ is a compact completely
bounded map then $\gamma_0^{-1}(\Phi)\in \K(H_1)\ehaag\B(H_2)\ehaags\B(H_{n-1})\ehaag\K(H_n)$.
\end{proposition}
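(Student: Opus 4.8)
The plan is to compute $u:=\gamma_0^{-1}(\Phi)\in\B(H_1)\ehaags\B(H_n)$, which exists and is unique by Proposition~\ref{prop:ran-gamma0}, and then to show that its two outer slices are compact operators, so that Spronk's formula~\eqref{eq:spronk} forces $u$ into the stated subspace. Writing $\G:=\B(H_2)\ehaags\B(H_{n-1})$ and using associativity and functoriality of $\ehaag$, I would apply~\eqref{eq:spronk} twice. First, with $\E=\K(H_1)$ and second leg $\B(H_2)\ehaags\B(H_n)$: since this second leg is the full space, the $L$-condition of~\eqref{eq:spronk} is automatic, and the membership $u\in\K(H_1)\ehaag\G\ehaag\B(H_n)$ reduces to $R_\omega(u)\in\K(H_1)$, where $R_\omega$ retains the first factor. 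Second, applied to the last leg, $u\in\K(H_1)\ehaag\G\ehaag\K(H_n)$ reduces to $L_{\omega'}(u)\in\K(H_n)$, where $L_{\omega'}$ retains the last factor (again the remaining hypotheses are automatic, every intermediate leg being the full $\B(H_i)$). Since slice maps are norm-continuous in the functional and $\K(H_1),\K(H_n)$ are norm-closed, it suffices to check these for $\omega=\omega_2\otimess\omega_n$ and $\omega'=\omega_1\otimess\omega_{n-1}$ with each $\omega_i=\omega_{\xi_i,\eta_i}$ a vector functional, whose span is norm-dense in the relevant predual.

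The crux is to express these iterated slices through the compact map $\Phi$. Writing $u=A_1\odots A_n$ and recalling from Theorem~\ref{cs} that $\Phi(T_1,\dots,T_{n-1})=A_1(T_1\otimes I)A_2\dots(T_{n-1}\otimes I)A_n$, I would feed in the rank-one operators $T_1^\zeta=\zeta\otimes\bar\eta_2\in\K(H_2,H_1)$ (the map $x\mapsto(x,\eta_2)\zeta$, with $\zeta\in H_1$ free) and $T_i=\xi_i\otimes\bar\eta_{i+1}\in\K(H_{i+1},H_i)$ for $i=2,\dots,n-1$, and evaluate the result at $\xi_n$. A direct telescoping computation, where each rank-one $T_i$ contracts precisely one factor $A_i$ against $\omega_i$, should give
\[ R_\omega(u)\,\zeta = \Phi\big(\zeta\otimes\bar\eta_2,\,\xi_2\otimes\bar\eta_3,\dots,\xi_{n-1}\otimes\bar\eta_n\big)\,\xi_n; \]
for $n=2$ this reads $R_{\omega_2}(u)\zeta=\Phi(\zeta\otimes\bar\eta_2)(\xi_2)$, and a check for $n=3$ confirms the general pattern.

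Granting this formula, the compactness transfer is immediate. The map $V:H_1\to\K(H_n,H_1)$, $V\zeta=\Phi(\zeta\otimes\bar\eta_2,\xi_2\otimes\bar\eta_3,\dots,\xi_{n-1}\otimes\bar\eta_n)$, is linear by multilinearity of $\Phi$; as $\zeta$ ranges over $\Ball(H_1)$, the elementary tensors $(\zeta\otimes\bar\eta_2)\otimess(\xi_{n-1}\otimes\bar\eta_n)$ have Haagerup norm at most $\|\eta_2\|\prod_{i\ge2}\|T_i\|$ and so form a bounded subset of $\Kh$, whence $V(\Ball(H_1))\subseteq\Phi(\text{bounded set})$ is precompact and $V$ is a compact operator. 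Since $R_\omega(u)=E_{\xi_n}\circ V$ with $E_{\xi_n}:\K(H_n,H_1)\to H_1$, $S\mapsto S\xi_n$, bounded, we get $R_\omega(u)\in\K(H_1)$. The condition $L_{\omega'}(u)\in\K(H_n)$ follows from the symmetric computation, keeping a free vector on the $H_n$-side and evaluating on the $H_1$-side. Feeding both slice conditions into the two applications of~\eqref{eq:spronk} yields $u\in\K(H_1)\ehaag\B(H_2)\ehaags\B(H_{n-1})\ehaag\K(H_n)$, as required.

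I expect the main obstacle to be the explicit telescoping identity for the iterated slice: matching the $n-1$ rank-one operators $T_i$ and the final evaluation vector $\xi_n$ to the vector functionals $\omega_2,\dots,\omega_n$, while tracking the block-matrix structure of the $A_i$ and the conjugate-linear duals $\bar\eta_i$. Once that bookkeeping is in place, the passage from compactness of $\Phi$ to compactness of the slices and the assembly through Spronk's formula are routine.
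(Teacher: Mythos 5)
Your proof is correct, and it reaches the decisive slice conditions by a more direct route than the paper's. Both arguments share the same skeleton: reduce membership in $\K(H_1)\ehaag\B(H_2)\ehaags\B(H_{n-1})\ehaag\K(H_n)$ to showing $R_\omega(u)\in\K(H_1)$ and $L_{\omega'}(u)\in\K(H_n)$ for elementary tensors of vector functionals (this is precisely Lemma~\ref{lem:eh-uniq}), and both rest on the identity expressing the outer slice through $\gamma_0(u)$ evaluated at a string of rank-one operators --- your telescoping formula is the paper's identity~(\ref{mixx}) read as an operator identity in the free vector $\zeta$ rather than as a matrix coefficient $(R_\omega(u)\xi_1,\eta_1)$. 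Where you diverge is in how compactness of the slice is extracted. The paper compresses: it sets $\Phi_\alpha(x)=p_\alpha\Phi(x)q_\alpha$ for finite-rank projections $p_\alpha\to I$, $q_\alpha\to I$, uses compactness of $\Phi$ (via a finite $\varepsilon$-net $y_1,\dots,y_\ell$) to upgrade strong convergence to $\|\Phi_\alpha-\Phi\|\to 0$, invokes Remark~\ref{rk:ff}~(i) to see that each $u_\alpha=\gamma_0^{-1}(\Phi_\alpha)$ already lies in $\K(H_1)\haag(\B(H_2)\ehaags\B(H_{n-1}))\haag\K(H_n)$ and hence has compact slices, and then passes to the limit using norm-closedness of $\K(H_1)$. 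You instead factor $R_\omega(u)=E_{\xi_n}\circ\Phi\circ j$, where $j$ sends $\zeta$ to the elementary tensor of rank-one operators with $\zeta$ in the first leg (a bounded linear map $H_1\to\Kh$) and $E_{\xi_n}$ is evaluation at $\xi_n$, so compactness of $\Phi$ transfers to $R_\omega(u)$ immediately; the $L_{\omega'}$ case is symmetric. This buys a shorter proof --- no approximating net, no appeal to Remark~\ref{rk:ff}~(i), no $3\varepsilon$ estimate --- at the cost only of verifying the telescoping identity carefully, which the paper needs anyway to establish~(\ref{mixx}). The one point to keep explicit in a final write-up is the reduction from general normal functionals to elementary tensors of vector functionals and the two-stage application of Spronk's formula across the associativity $\B(H_1)\ehaag(\B(H_2)\ehaags\B(H_n))$; this is exactly the content of Lemma~\ref{lem:eh-uniq}, so citing it (as the paper does) closes that step cleanly.
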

\begin{proof}
  Fix $\varepsilon>0$. By compactness, there exist $y_1,\ldots, y_\ell\in \K(H_n,H_1)$
  such that
  $\min_{1\leq i\leq \ell}\|\Phi(x)-y_i\|<\varepsilon$
  for each $x\in\Kh$ with $\|x\|\leq 1$.

Let $\{p_{\alpha}\}$ (resp.~$\{q_{\alpha}\}$) be a net of
finite rank projections finite rank in $\K(H_1)$ (resp.~$\K(H_n)$) such
that $p_{\alpha}\to I$ (resp.~$q_{\alpha}\to I$) strongly and let
$\Phi_{\alpha} : \Kh\rightarrow \K(H_n,H_1)$ be the map given by
$\Phi_{\alpha}(x) = p_{\alpha}\Phi(x)q_{\alpha}$. Let $u =
\gamma_0^{-1}(\Phi)$ and $u_{\alpha} =
\gamma_0^{-1}(\Phi_{\alpha})$. Since each $y_i$ is compact there
exists $\alpha_0$ such that
$\|p_{\alpha}y_iq_{\alpha}-y_i\|<\varepsilon$ for $i=1,\ldots,\ell$
and $\alpha\geq\alpha_0$. Moreover, for any $x\in\Kh$, $\|x\|\leq
1$ and $\alpha\geq \alpha_0$, we have
\begin{eqnarray*}
\|\Phi_{\alpha}(x)-\Phi(x)\| & \leq & \min_{1\le i\le
\ell}\{\|\Phi_{\alpha}(x)-p_{\alpha}y_iq_{\alpha}\|
+\|p_{\alpha}y_iq_{\alpha}-y_i\|+ \|y_i-\Phi(x)\|\}\\
& \leq &
\min_{1\le i\le
\ell}\{2\|\Phi(x)-y_i\|+\|p_{\alpha}y_iq_{\alpha}-y_i\|\}\leq
3\varepsilon,
\end{eqnarray*}
so $\|\Phi_{\alpha}-\Phi\|\to 0$. Remark~\ref{rk:ff} (i) shows
that $u_{\alpha}\in
\K(H_1)\haag(\B(H_2)\ehaags\B(H_{n-1}))\haag\K(H_n)$;
it follows that for every $\omega\in
(\B(H_2)\ehaags\B(H_{n-1})\ehaag\B(H_n))_*$ we have
$R_{\omega}(u_{\alpha})\in \K(H_1)$.

Suppose that $\xi_i,\eta_i\in H_i$ and let
$\omega_i=\omega_{\xi_i,\eta_i}$ be the corresponding vector
functional.
Lemma~\ref{sli} and a straightforward verification shows that if
$v\in \B(H_1)\ehaags\B(H_n)$ has a representation of the form $v =
A_1\odot\ldots\odot A_n$ and $\omega = \omega_2\otimess\omega_n$
then
\begin{equation}\label{mixx}
  (R_{\omega}(v)\xi_1,\eta_1) = \langle
  v,\omega_1\otimes\ldots\otimes\omega_n\rangle =  (\gamma_0(v)(\zeta)\xi_n,\eta_1),
\end{equation}
where
\[\zeta=
\big((\eta_2^*\otimes\xi_1)\otimes(\eta_3^*\otimes\xi_2)\otimess
(\eta_{n-1}^*\otimes\xi_{n-2})\otimes(\eta_n^*\otimes\xi_{n-1})\big)\in \Kh
\]
is an elementary tensor whose components are rank one operators.

Since $\gamma_0(u_{\alpha}) \rightarrow \gamma_0(u)$ in norm,
(\ref{mixx}) implies that $R_{\omega}(u_{\alpha})\to
R_{\omega}(u)$ in the operator norm of $\K(H_1)$. Since
$R_{\omega}(u_{\alpha})\in \K(H_1)$, we obtain $R_{\omega}(u)\in
\K(H_1)$. By Lemma~\ref{lem:eh-uniq}, $u\in 
\K(H_1)\ehaag\B(H_2)\ehaags\B(H_n)$. Similarly we see that
$u\in \B(H_1)\ehaag\B(H_2)\ehaags\B(K_n)$; the
conclusion now follows.
\end{proof}

\begin{nonumremark}
  The converse of Proposition~\ref{ocm} does not hold, even for $n=2$. Indeed, let
  $\{p_i\}_{i=1}^\infty$ be a family of pairwise orthogonal rank one
  projections on a Hilbert space~$H$ and let $u = \sum_{i=1}^{\infty}
  p_i\otimes p_i$. Then $u\in \K(H)\ehaag\K(H)$ and the range of
  $\gamma_0(u)$ consists of compact operators, but
  $\gamma_0(u)(p_i)=p_i$ for each $i$, so $\gamma_0(u)$ is not
  compact.
\end{nonumremark}

Given C*-algebras $\As$, we let $M_{c}(\As)$ be the collection of
all $\nph\in M(\As)$ for which there exist faithful
representations $\pi_1,\dots,\pi_n$ of $\As$, respectively, such
that if $\pi = \pi_1\otimess\pi_n$ then the map $\Phi_{\pi(\phi)}$
is compact. We call the elements of $M_{c}(\As)$ {\it compact
multipliers}.

As a consequence of the previous result we obtain the following
fact.

\begin{proposition}\label{ehaag}
Let $\As$  be C*-algebras and let $\nph\in M_c(\As)$. Then
\[u_{\phi}\in
\begin{cases}
\K(\A_n) \ehaag\A_{n-1}^o\ehaags\A_2\ehaag \K(\A_1^o)\;&\text{if $n$ is even,}\\
\K(\A_n) \ehaag\A_{n-1}^o\ehaags\A_2^o\ehaag \K(\A_1)&\text{if $n$ is odd.}
\end{cases}
\]
\end{proposition}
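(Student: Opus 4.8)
The plan is to exploit the two structural facts already available for the identity‑representation symbol $u^\id_\phi$: Proposition~\ref{ocm}, which extracts compactness in the two outer tensor legs from compactness of the map, and Proposition~\ref{symin}, which confines every leg to the relevant C*-algebra. First I would use the definition of $M_c(\As)$ to fix faithful representations $\pi_i$ with $\Phi_{\pi(\phi)}$ compact, and then identify each $\A_i$ with $\pi_i(\A_i)\subseteq\B(H_i)$, so that $\pi_i=\id$ and, by the remark following Theorem~\ref{symb}, $u_\phi=u^\id_\phi$ after identifying $\A_i^o$ with $\A_i^\dd$. I would treat only the even case, the odd one being identical up to the labels of the outer legs. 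Relabelling the Hilbert spaces as $H_n,H_{n-1}^\dd,\dots,H_1^\dd$ so that they index the factors of $u^\id_\phi\in\B(H_n)\ehaag\B(H_{n-1}^\dd)\ehaags\B(H_1^\dd)$, Proposition~\ref{ocm} gives
\[u^\id_\phi\in\K(H_n)\ehaag\B(H_{n-1}^\dd)\ehaags\B(H_2)\ehaag\K(H_1^\dd),\]
while Proposition~\ref{symin} gives
\[u^\id_\phi\in\A_n\ehaag\A_{n-1}^\dd\ehaags\A_2\ehaag\A_1^\dd.\]

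The main obstacle is that, in contrast with the completely compact case (Theorem~\ref{charcn}), compactness here produces the outer factors only in the extended Haagerup tensor product, so Lemma~\ref{lem:ss}, which requires one of the two memberships to be an ordinary Haagerup product, is unavailable for intersecting the displays above. Instead I would combine them one end at a time through the slice‑map criterion of Lemma~\ref{lem:eh-uniq}, exactly as that lemma is used in the proof of Proposition~\ref{ocm}. Taking $\E_1=\A_n,\dots,\E_n=\A_1^\dd$ and $\X=\K(H_n)\cap\A_n$: for every $\omega=\omega_2\otimess\omega_n$ built from vector functionals, the iterated right slice $R_\omega(u^\id_\phi)$ lies in $\A_n$ (as $u^\id_\phi$ belongs to the extended Haagerup product with first factor $\A_n$) and in $\K(H_n)$ (by the first display), hence in $\X$. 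Lemma~\ref{lem:eh-uniq} then yields
\[u^\id_\phi\in(\K(H_n)\cap\A_n)\ehaag\A_{n-1}^\dd\ehaags\A_1^\dd.\]
Using this as the new base and repeating with the left slices $L_{\omega'}$ and $\Y=\K(H_1^\dd)\cap\A_1^\dd$ — noting $L_{\omega'}(u^\id_\phi)\in\A_1^\dd\cap\K(H_1^\dd)$ for the same two reasons — gives
\[u^\id_\phi\in(\K(H_n)\cap\A_n)\ehaag\A_{n-1}^\dd\ehaags\A_2\ehaag(\K(H_1^\dd)\cap\A_1^\dd).\]

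Finally I would pass from concrete compact operators to compact elements. Since $\id$ is faithful, any element of $\A_n$ that is a compact operator on $H_n$ is a compact element, so $\K(H_n)\cap\A_n\subseteq\K(\A_n)$ by the characterisation of compact elements recalled in Section~\ref{sec:prelims}; likewise $\K(H_1^\dd)\cap\A_1^\dd\subseteq\K(\A_1^\dd)$ because $\id^\dd$ is faithful. Identifying $\A_i^o$ with $\A_i^\dd$ and recalling $u_\phi=u^\id_\phi$, these two inclusions turn the last display into $u_\phi\in\K(\A_n)\ehaag\A_{n-1}^o\ehaags\A_2\ehaag\K(\A_1^o)$, which is the assertion for even $n$. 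The odd case follows by the identical argument, with $\A_1$ and $\K(\A_1)$ replacing $\A_1^\dd$ and $\K(\A_1^o)$ in the last leg.
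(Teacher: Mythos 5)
Your proposal is correct and follows essentially the same route as the paper: both combine Propositions~\ref{symin} and~\ref{ocm} and then intersect the two extended Haagerup memberships leg by leg via the slice-map characterisation, finishing with $\K(H_n)\cap\A_n\subseteq\K(\A_n)$ and its counterpart on the other end. The only difference is presentational: the paper cites Spronk's formula~(\ref{eq:spronk}) directly where you invoke Lemma~\ref{lem:eh-uniq} (itself a consequence of that formula), and your observation that Lemma~\ref{lem:ss} is unavailable here is exactly the reason the paper switches to the eh slice criterion.
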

\begin{proof}
We only consider the case $n$ is even. We may assume that
$\A_i\subseteq \B(H_i)$ is a concrete non-degenerate C*-algebra,
$i = 1,\dots,n$, and that $\Phi_{\nph}$ is compact. By
Propositions~\ref{symin} and~\ref{ocm}, $u_{\nph}^{\id}$ belongs
to
$$\big(\K(H_n)\ehaag\B(H_{n-1}^{\dd})\ehaags\B(H_{2})\ehaag\K(H_1^{\dd})\big)
\cap \big(\A_n\ehaags\A_1^{\dd}\big).$$ Since $\K(H_n)\cap
\A_n\subseteq\K(\A_n)$ and $\K(H_1^{\dd})\cap
\A_1^{\dd}\subseteq\K(\A_1^\dd)$, an application of
(\ref{eq:spronk}) shows that $u_{\nph}^{\id}\in \K(\A_n)
\ehaag\A_{n-1}^{\dd}\ehaags\A_2\ehaag \K(\A_1^{\dd})$.
\end{proof}

If $\{\A_j\}_{j\in J}$ is a family of C*-algebras, we will denote by
$\oplus^{c_0}_{j\in J} \A_j$ and~$\oplus^{\ell_{\infty}}_{j\in J} \A_j$ their 
$c_0$- and $\ell_{\infty}$-direct sums, respectively.

\begin{theorem}\label{otherc}
  Let $\A_1, \dots,\A_n$ be C*-algebras, and
suppose 
  that $\K(\A_1)$ is isomorphic to $\bigoplus^{c_0}_{j\in J}
  M_{m_j}$ and $\K(\A_n)$ is isomorphic to $\bigoplus^{c_0}_{j\in J}
  M_{n_j}$ where $J$ is some index set and $\sup_{j\in J}m_j$ and
  $\sup_{j\in J} n_j$ are finite.  Then \[M_c(\As)=
  M_{cc}(\As).\]
\end{theorem}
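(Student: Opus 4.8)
The inclusion $M_{cc}(\As)\subseteq M_c(\As)$ requires no hypothesis: a completely compact map is compact (take $m=1$ in the definition), so if $\phi\in M_{cc}(\As)$ is witnessed by a faithful $\pi$, then $\Phi_{\pi(\phi)}$ is compact and $\phi\in M_c(\As)$. The content of the theorem is therefore the reverse inclusion under the bounded degree hypothesis. The plan is to fix $\phi\in M_c(\As)$, realise the $\A_i$ concretely through faithful representations $\pi_i$ for which $\Phi:=\Phi_{\pi(\phi)}$ is compact (passing if necessary to the reduced atomic representations, so that $\pi_i(\K(\A_i))=\K(H_i)\cap\pi_i(\A_i)$ consists of honest compact operators), and to prove that $\Phi$ is in fact completely compact; by Theorem~\ref{ecb} this places $\phi$ in $M_{cc}(\As)$. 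Equivalently, combining Proposition~\ref{ehaag} with Theorem~\ref{charcn}, I must upgrade the two outer legs of the symbol from the extended to the ordinary Haagerup tensor product, that is, pass from $u_\phi\in\K(\A_n)\ehaag(\cdots)\ehaag\K(\A_1^o)$ to $u_\phi\in\K(\A_n)\haag(\cdots)\haag\K(\A_1^o)$.

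The decisive use of the hypothesis is through the following comparison, which I would isolate as a lemma: if $\K(\A_1)$ and $\K(\A_n)$ have matrix degree bounded by $N$, then every $\A_1',\dots,\A_n'$-modular completely bounded map $\Psi:\Kh\to\K(H_1^\dd,H_n)$ satisfies $\|\Psi\|_{\cb}\le C(N)\,\|\Psi\|$ with $C(N)$ depending only on $N$. This generalises the classical equality of the Schur multiplier norm and the completely bounded norm (the case $N=1$ of scalar blocks, in which the modular maps are exactly Schur multipliers). In the reduced atomic picture the relevant commutants act blockwise with blocks of size at most $N$, so such a $\Psi$ is a \emph{block Schur multiplier} for the grid determined by the central projections $f_j\in\K(\A_n)$ and $g_k\in\K(\A_1)$ of ranks $n_j,m_k\le N$. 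I would prove the lemma by decomposing $\Psi$ along this grid, handling each finite doubly-compressed block $f_j\Psi(\cdot)g_k^\dd$, whose range lies in the at most $N^2$-dimensional space $\K(g_kH_1^\dd,f_jH_n)$, via Corollary~2.2.4 of~\cite{effros_ruan1}, and then controlling the assembled multiplier by Smith's lemma, which computes the completely bounded norm at the finite amplification level dictated by $N$.

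Granting the lemma, the remainder is quick. Since $\Phi$ is compact, its image of the unit ball of $\Kh$ is relatively compact; as the finite rank projections $P_G=\sum_{j\in G}f_j$ and $Q_{G'}=\sum_{k\in G'}g_k$ increase strongly to the supports of $\K(\A_n)$ and $\K(\A_1)$ (which act as the identity on the values of $\Phi$), the argument of Proposition~\ref{ocm} gives $\|\Phi-P_G\Phi(\cdot)Q_{G'}^\dd\|\to 0$ in the operator norm along the net of finite sets $(G,G')$. Each map $P_G\Phi(\cdot)Q_{G'}^\dd$ is still $\A_i'$-modular, since $P_G$ and $Q_{G'}$ lie in $\A_n$ and $\A_1$ and hence commute with the commutants, so its difference with $\Phi$ is modular; moreover it takes values in the finite-dimensional space $\K(Q_{G'}H_1^\dd,P_GH_n)$ and is therefore of finite rank, hence completely compact. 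Applying the lemma to the modular difference upgrades the operator-norm convergence to $\|\Phi-P_G\Phi(\cdot)Q_{G'}^\dd\|_{\cb}\to 0$, and the closedness of the completely compact maps (Proposition~\ref{ccc}) shows that $\Phi$ is completely compact. The odd case is identical, using the odd versions of Proposition~\ref{ehaag} and Theorem~\ref{charcn}.

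The main obstacle is the comparison lemma of the second paragraph. The subtlety is that a block Schur multiplier is genuinely more than the supremum of its blocks — already for scalar blocks the completely bounded norm of a Schur multiplier can dwarf the supremum of its symbol — so the bound $\|\Psi\|_{\cb}\le C(N)\|\Psi\|$ cannot come from a naive blockwise estimate and must exploit the uniform bound $N$ on block sizes in an essential way. This is precisely the point at which the hypothesis fails for algebras such as $\K(H)$ containing matrix subalgebras of unbounded size, consistent with Saar's example of a compact multiplier that is not completely compact. I expect the cleanest route to the lemma to pass through the weak*\nobreakdash-Haagerup description of modular maps (Theorem~\ref{cs}) together with Smith's lemma, reducing the completely bounded norm to the amplification of order $N$.
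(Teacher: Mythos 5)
Your overall architecture is the same as the paper's: reduce via Proposition~\ref{ehaag} to the supports of $\K(\A_1)$ and $\K(\A_n)$, use compactness to get operator-norm convergence of the finite-rank compressions $P_G\Phi(\cdot)Q_{G'}^\dd$ as in Proposition~\ref{ocm}, and then upgrade to cb-norm convergence so that Proposition~\ref{ccc} applies. Everything therefore hinges on the comparison $\|\Psi\|_{\cb}\le C(N)\|\Psi\|$ for the relevant modular maps, and this is precisely the step you have not established. Your first route (slice along the infinite grid of central projections $f_j,g_k$, bound each block by Corollary~2.2.4 of~\cite{effros_ruan1}, then ``assemble'') fails for the reason you yourself point out: the cb norm of the assembled map is not controlled by the supremum of the blockwise cb norms, and the finite-amplification result of~\cite{smith83} concerns maps into a fixed $M_N$, not the assembled map into $\K(H_1^\dd,H_n)$. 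Your second route is stated only as an expectation. Note also that the modular version of Smith's theorem you want requires the module algebra to have a cyclic vector, and the algebra over which $\Phi$ is modular here --- the commutant of $\C=\bigoplus^{c_0}_{j\in J}M_m$, namely $\ell_\infty(J)\otimes 1_m$ acting on $\bigoplus_{j\in J}\bC^m$ --- has no cyclic vector once $m>1$. So the lemma cannot be obtained by a direct appeal to the cited results.

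The missing idea, which is the actual content of the paper's proof, is a \emph{finite} decomposition that restores multiplicity one: with $P_k=\bigoplus_{j\in J}e_{kk}$ ($k=1,\dots,m$) the diagonal matrix-unit projections of $\C$, one writes $\Phi=\sum_{k,\ell,s,t}P_k\Phi(P_\ell\,\cdot\,P_s)P_t$, a sum of at most $m^4\le N^4$ maps. Conjugating each summand by the unitaries $P_kH\to P_1H$ produces a map whose symbol has outer legs in $P_1\C P_1\simeq c_0(J)$; it is therefore modular over an algebra whose commutant does have a cyclic vector, and the generalisation \cite[Lemma~3.3]{jtt} of Smith's theorem gives the exact equality of norm and cb norm for each summand (applied to the differences with the finite-rank compressions). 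Summing the finitely many pieces yields your comparison lemma with $C(N)=N^4$ and completes the argument. An alternative that avoids the slicing would be a version of Smith's theorem for module algebras with a cyclic set of $N$ vectors, giving $\|\Psi\|_{\cb}=\|\Psi^{(N)}\|$; that would also work, but it is not what the references you invoke provide and would itself need a proof.
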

\begin{proof}
  We give the proof for $n=3$; the case of a general $n$ is similar.
Let $m=\sup\{m_j,n_j:j\in J\}$. By hypothesis, $\K(\A_1)$ and
  $\K(\A_3)$ may both be embedded in the C*-algebra
  $\C\defeq\bigoplus^{c_0}_{j\in J}M_m$ for some $m\in\bb{N}$;
  without loss of generality, we
  may assume that this embedding is an inclusion and that $\A_i$ 
  is represented faithfully on some Hilbert space $H_i$ such
  that $H_1$ and $H_3$ both contain the Hilbert space
  
  $H=\bigoplus_{j\in J} \bC^m$.
  Given $\phi\in
  M_c(\A_1,\A_2,\A_3)$, Proposition~\ref{ehaag} implies that the symbol
  $u_{\phi}$ of $\phi$ can be written in the form $u_\phi = A_3\odot
  A_{2}\odot A_1$, where the entries of $A_3$ and
  $A_1$ belong to $\C$.  Let $\{e_{ij}:i,j=1,\dots,m\}$
  be the canonical matrix unit system of $M_m$ and let $P_k =
  \bigoplus_{j\in J} e_{kk} \in \bigoplus^{\ell^{\infty}}_{j\in J} M_m$,
  $k=1,\dots,m$. For $k,\ell,s,t=1,\dots,m$, we set
  $A_3^{k,\ell}=P_kA_3(P_\ell\otimes I)$ and
  $A_1^{s,t}=(P_s\otimes I)A_1P_t$ and define
  \[ u_{k,\ell,s,t}=A_3^{k,\ell}\odot
  A_2\odot A_1^{s,t}  \qand \Phi_{k,\ell,s,t}=\gamma_0(u_{k,\ell,s,t}).\]
  Then
  $\gamma_0(u_{\varphi})=\Phi = \sum_{k,\ell,s,t} \Phi_{k,\ell,s,t}$ so it
  suffices to show that each of the maps $\Phi_{k,\ell,s,t}$ is
  completely compact. Now
  \begin{multline*}
    \Phi_{k,\ell,s,t}(T_{2}\otimes T_1) =
    P_k\Phi(P_\ell T_{2}\otimes T_1P_s)P_t
    =A_3^{k,\ell}((P_\ell T_{2})\otimes I)A_{2}
    ((T_1P_s)\otimes I)A_1^{s,t}.
  \end{multline*}
  Thus, $\Phi_{k,\ell,s,t}$ can be considered as a completely bounded
  multilinear map from
  $\K(H_{2}^{\dd},P_\ell H)\times\K(P_s H,H_2^\dd)$ into $\K(P_t
  H,P_kH)$.
   Since $\Phi$ is compact,
  it follows that $\Phi_{k,\ell,s,t}$ is compact.

  Take a basis $\{e_i^j:i=1,\ldots,m, j\in J\}$ of
  $H=\bigoplus_{j\in J} {\mathbb C}^m$, where for each $j\in J$,
  the standard basis of the $j$-th copy
  of ${\mathbb C}^m$ is $\{e_i^j: i=1,\ldots, m\}$. 
  Let $U_{k}:P_k H\to P_1H$ be the unitary
  operator defined by $U_{k}e_{k}^j=e_1^j$. Consider the
  mapping $\Psi: \K(H_{2}^{\dd},P_1H)\times\K(P_1 H,H_2^\dd)\to\K(P_1H,P_1H)$ given by
  \[
  \Psi(T_{2}\otimes T_1)=U_{k}\Phi_{k,\ell,s,t}(U_{\ell}T_{2}\otimes
  T_1U_{s})U_{t}.
  \]
  To show that $\Phi_{k,\ell,s,t}$ is completely compact it
  suffices to show that $\Psi$ is.
  Let $\C_0 = P_1\C P_1$; then
  $\C_0$ is isomorphic to $c_0$ and its commutant $\C_0'$
  has a cyclic vector. Moreover, $\Psi$ is a $\C_0'$-modular
  multilinear map.
   Let $\{p_{\alpha}\}$ be a net of finite dimensional projections
   belonging to $\C_0$, such that s-$\lim p_{\alpha}=I_{P_1 H}$.
   Consider the completely bounded multilinear maps $\Psi_{\alpha}(x)
   = p_{\alpha}\Psi(x)p_{\alpha}$.  Since the ranges of the $p_{\alpha}$'s
   are finite dimensional, $\Psi_\alpha$ has finite rank
   and is hence completely compact.
   Since $\Psi$ is compact, we may argue as in the proof of
   Proposition~\ref{ocm} to show that $\|\Psi_{\alpha}-\Psi\|\to 0$.
   Now the maps $\Psi$ and $\Psi_{\alpha}$ are $\C_0'$-modular
   and $\C_0'$ has a cyclic vector, so by the generalisation~\cite[Lemma~3.3]{jtt} of a 
   result of Smith~\cite[Theorem~2.1]{smith},
   $$\|\Psi_{\alpha}-\Psi\|_{\cb}=\|\Psi_{\alpha}-\Psi\|\to 0.$$
   Proposition~\ref{ccc} now implies that $\Psi$ is completely compact.
\end{proof}

The following corollary extends Proposition 5 of~\cite{hladnik} to
the case of multidimensional Schur multipliers. We recall
from~\cite{jtt} that with every $\nph\in
\ell_{\infty}(X_1\times\dots\times X_n)$ we associate a mapping
$S_{\nph} : \ell_2(X_1\times
X_2)\odot\dots\odot\ell_2(X_{n-1}\times X_n) \rightarrow
\ell_2(X_1\times X_n)$ which extends the usual Schur
multiplication in the case $n = 2$. We equip the domain of
$S_{\nph}$ with the Haagerup norm where each of the terms is given
its operator space structure arising from its embedding into the
corresponding space of Hilbert-Schmidt operators endowed with the
operator norm.

\begin{corollary}\label{cmultd}
Let $X_1,\dots,X_n$ be sets and $\nph\in
\ell_{\infty}(X_1\times\dots\times X_n)$. The following are
equivalent:

(i) $S_{\nph}$ is compact;

(ii) $\nph\in
c_0(X_1)\haag(\ell_{\infty}(X_2)\ehaags\ell_{\infty}(X_{n-1}))\haag c_0(X_n)$.
\end{corollary}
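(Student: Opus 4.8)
The plan is to specialise the general theory of Sections~\ref{schar} and~\ref{sec:cccm} to the commutative C*-algebras $\A_i=\ell_\infty(X_i)$, represented as multiplication operators on $H_i=\ell_2(X_i)$, and write $\pi=\pi_1\otimess\pi_n$ for this faithful representation. Two elementary facts about these algebras carry the argument. First, their compact elements are $\K(\A_i)=c_0(X_i)$, and $\pi_i$ realises them as genuine compact operators: $\K(\ell_2(X_i))\cap\ell_\infty(X_i)=c_0(X_i)$. Second, $\ell_\infty(X_i)$ and $c_0(X_i)$ are commutative, so their opposite C*-algebras, and (the transpose being completely isometric on diagonal operators) their opposite operator space structures, coincide with the originals; thus $\A_i^o\cong\A_i$ and $c_0(X_i)^o\cong c_0(X_i)$ completely isometrically.

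First I would match the corollary's Schur-multiplication map $S_\nph$ with the map $\Phi_{\pi(\nph)}$ of Section~\ref{sec:cbmult} and check that $S_\nph$ is compact if and only if $\Phi_{\pi(\nph)}$ is. This amounts to keeping track of the ordering of the tensor legs and of the operator space structures (standard versus opposite) placed on the Hilbert--Schmidt factors, so that the passage from $S_\nph$ to $\Phi_{\pi(\nph)}$ is norm-preserving at the Banach level. Granting this, compactness of $S_\nph$ forces $\nph$ to be a concrete, hence (by the Schur-multiplier characterisation of~\cite{jtt}) a universal, multiplier; and condition~(ii) places $\nph$ in $\ell_\infty(X_1)\ehaags\ell_\infty(X_n)$, hence in $M(\As)$. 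In both directions we may therefore assume $\nph\in M(\As)$ and prove: $\Phi_{\pi(\nph)}$ is compact if and only if $\nph$ satisfies~(ii).

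Next I would invoke Theorem~\ref{otherc}. Since $\K(\A_1)=c_0(X_1)\cong\bigoplus^{c_0}_{x\in X_1}M_1$ and $\K(\A_n)=c_0(X_n)\cong\bigoplus^{c_0}_{x\in X_n}M_1$ are $c_0$-sums of one-dimensional matrix algebras, it gives $M_c(\As)=M_{cc}(\As)$, so $\Phi_{\pi(\nph)}$ is compact if and only if it is completely compact. Because the natural representation satisfies $\K(H_i)\cap\A_i=\K(\A_i)$, the condition of Theorem~\ref{charpi} (for the identity representation) coincides with that of Theorem~\ref{charcn}; thus $\Phi_{\pi(\nph)}$ is completely compact precisely when the symbol $u_\nph$ lies in $\K(\A_n)\haag(\A_{n-1}^o\ehaags\A_2)\haag\K(\A_1^o)$, which, by the first fact, is $c_0(X_n)\haag(\ell_\infty(X_{n-1})^o\ehaags\ell_\infty(X_2))\haag c_0(X_1)^o$ (for even $n$).

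Finally I would convert this symbol condition into~(ii). By Theorem~\ref{symb} the map $\nph\mapsto u_\nph$ is linear and isometric and sends $a_1\otimess a_n$ to $a_n\otimes a_{n-1}^o\otimess a_1^o$; by the second fact the opposites are trivial, so it is simply the order-reversal $a_1\otimess a_n\mapsto a_n\otimess a_1$. Using the identity $(\E\haag\F)^o\cong\F^o\haag\E^o$ together with $\A_i^o\cong\A_i$ and $c_0(X_i)^o\cong c_0(X_i)$, this reversal is a completely isometric isomorphism taking the space of~(ii), namely $c_0(X_1)\haag(\ell_\infty(X_2)\ehaags\ell_\infty(X_{n-1}))\haag c_0(X_n)$, exactly onto the symbol space of the previous step. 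Hence $\nph$ satisfies~(ii) if and only if $u_\nph$ lies in that space, which closes the chain of equivalences; the odd case is identical. I expect the real difficulty to lie in the first and last steps --- aligning the orderings of the tensor legs of $S_\nph$, of $\Phi_{\pi(\nph)}$ and of $u_\nph$, together with the opposite operator space structures, so that the flip--opposite identity applies. This is exactly where the commutativity, i.e.\ the self-opposite nature, of the $\ell_\infty(X_i)$ is used to erase the distinction between $\nph$ and its reversed symbol; the appeals to Theorems~\ref{otherc} and~\ref{charpi} and the computation $\K(\ell_\infty(X_i))=c_0(X_i)$ are then routine.
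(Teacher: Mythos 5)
Your argument is correct, but it is packaged differently from the paper's. The paper proves (i)$\Rightarrow$(ii) directly: it passes from $S_\nph$ to the compact completely bounded map $\hat S_\nph$ on Hilbert--Schmidt operators, applies Proposition~\ref{ocm} to place the corresponding element in $\K(\ell_2)\ehaag\B(\ell_2)\ehaags\B(\ell_2)\ehaag\K(\ell_2)$, intersects with $\ell_\infty\ehaags\ell_\infty$ (via \cite[Theorem~3.4]{jtt} and the slice-map formula~(\ref{eq:spronk})) to get $c_0\ehaag\ell_\infty\ehaags\ell_\infty\ehaag c_0$, and then upgrades the outer $\ehaag$'s to $\haag$'s by re-running the last paragraph of the proof of Theorem~\ref{otherc} (finite-rank projections in $c_0$, modularity, and Smith's lemma); the converse direction is left implicit. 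You instead invoke Theorem~\ref{otherc} as a black box to get $M_c=M_{cc}$, then route everything through the symbol via Theorems~\ref{charpi} and~\ref{charcn}, and finish with the order-reversal/self-opposite bookkeeping for commutative algebras. The underlying engine is the same --- Theorem~\ref{otherc}'s proof itself rests on Proposition~\ref{ehaag} (hence Proposition~\ref{ocm}) and the same modularity argument --- but your version is more modular and handles both implications symmetrically, at the cost of having to track the flip between $\nph$ and $u_\nph$ and to pin the existential quantifiers in $M_c$ and $M_{cc}$ down to the multiplication representation. You do address that last point correctly: the observation $\K(\ell_2(X_i))\cap\ell_\infty(X_i)=c_0(X_i)=\K(\ell_\infty(X_i))$ is exactly what makes the condition of Theorem~\ref{charpi} for the multiplication representation agree with that of Theorem~\ref{charcn}, and the self-oppositeness of commutative C*-algebras makes the symbol map an honest flip, so the reversal identity $(\E\haag\F)^o\cong\F^o\haag\E^o$ carries the symbol space onto the space in~(ii). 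The two steps you flag as delicate (matching $S_\nph$ with $\Phi_{\pi(\nph)}$ isometrically, and the flip) are treated with the same level of detail as in the paper, which also relegates the first to a citation of \cite[Section~3]{jtt}.
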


\begin{proof}
Assume first that $S_{\nph}$ is compact. For notational simplicity
we assume that $X_i={\mathbb N}$, $i=1,\ldots,n$. It follows from \cite[Section~3]{jtt}
that the map $S_\nph$ induces a completely bounded compact map
$$\hat S_{\nph}:\C_2\times\ldots \times\C_2\to \C_2$$
defined by $\hat
S_\nph(T_{f_1},\ldots,T_{f_n})=T_{S_{\nph}(f_1,\ldots,f_n)}$,
where $T_f$ is the Hilbert-Schmidt operator with kernel $f$. By
Proposition~\ref{ocm}, $\varphi=\gamma_0^{-1}(\hat S_{\varphi})\in
\K(\ell_2)\ehaag
\B(\ell_2)\ehaag\ldots\ehaag\B(\ell_2)\ehaag\K(\ell_2)$. Since
$S_{\varphi}$ is bounded, $\nph$ is a Schur multiplier and
by~\cite[Theorem~3.4]{jtt}, $\nph\in
\ell_{\infty}\ehaag\ldots\ehaag \ell_{\infty}$. Hence $\nph\in
c_0\ehaag \ell_{\infty}\ehaag\ldots\ehaag \ell_{\infty}\ehaag
c_0$. We may now argue as in the last paragraph of 
the preceding proof to
show that $\nph\in c_0\haag (\ell_{\infty}\ehaags
\ell_{\infty})\haag c_0$.
\end{proof}

Our next aim is to show that if both $\K({\mathcal A_1})$ and
$\K(\A_n)$ contain full matrix algebras of arbitrarily large sizes
then the completely compact multipliers form a proper subset of
the set of compact multipliers. Saar~\cite{saar} has provided an
example of a compact completely bounded map on $\K(H)$ (where $H$
is a separable Hilbert space) which is not completely compact. It
turns out that Saar's example also shows that the sets of compact
and completely compact multipliers are distinct, in the case under
consideration.

We will need some preliminary results. Let ${\mathcal A}$ and
${\mathcal B}$ be $C^*$-algebras. Recall that a linear map
$\Phi:{\mathcal A}\to{\mathcal B}$ is called symmetric (or
hermitian) if $\Phi = \Phi^*$ where $\Phi^*:{\mathcal
A}\to{\mathcal B}$ is the map given by $\Phi^*(a)=(\Phi(a^*))^*$.
By $S_{\mathcal A}$ we denote the unit ball of ${\mathcal A}$ and
set $S_{\mathcal A}^h=\{a\in S_{\mathcal A}: a=a^*\}$. The
following lemma is a special case of Satz~6 of~\cite{saar}. We
include a direct proof for the convenience of the reader.

\begin{lemma}\label{lemma-appendix}
Let $H$ be a Hilbert space. If $\Phi:{\mathcal A}\to \K(H)$ is a
symmetric, completely compact linear map with $\|\Phi\|_{\cb}\leq
1$, then there exists a positive operator $c\in\K(H)$ such that
$\Phi^{(n)}(a)\leq c\otimes 1_n$ for all $a\in S_{M_n({\mathcal
A})}^h$ and all $n\in\bb{N}$. Moreover, $c$ can be chosen to have
norm arbitrarily close to one.
\end{lemma}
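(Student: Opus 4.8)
The plan is to manufacture $c$ as a single positive compact operator, diagonal with respect to a carefully chosen increasing sequence of finite-rank projections, that dominates every $\Phi^{(n)}(a)$ from above. Throughout write $w=\Phi^{(n)}(a)$; since $\Phi$ is symmetric and $a=a^*$, the operator $w$ is self-adjoint, and $\|\Phi\|_{\cb}\le 1$ gives $\|w\|\le 1$. The first step is to turn complete compactness into a \emph{uniform} two-sided compression estimate: for a finite-rank projection $e$ on $H$ and $E=e\otimes 1_n$, I will show that
$$\delta(e):=\sup\{\|w-EwE\|:a\in S^h_{M_n(\A)},\ n\in\bb{N}\}\to 0\quad\text{as }e\uparrow I.$$
Given $\epsilon>0$, complete compactness supplies a finite-dimensional $F\subseteq\K(H)$ with $\dist(\Phi^{(n)}(a),M_n(F))<\epsilon$ for all self-adjoint contractions and all $n$. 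Choosing $z\in M_n(F)$ with $\|w-z\|<\epsilon$ and estimating $\|w-EwE\|\le 2\|w-z\|+\|z-EzE\|$, the last term is bounded by $\|z\|$ times the cb-norms of the restrictions to $F$ of the maps $y\mapsto(1-e)y$ and $y\mapsto y(1-e)$. As $F$ is finite-dimensional and consists of compact operators, $\sup_{y\in F,\|y\|\le1}\|(1-e)y\|$ and the analogous right-hand quantity tend to $0$ (monotonically in $e$, since $e\le e'$ gives $\|(1-e')y\|\le\|(1-e)y\|$); by \cite[Corollary~2.2.4]{effros_ruan1} the corresponding cb-norms are at most $(\dim F)$ times these, so they also tend to $0$. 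This yields $\delta(e)=O(\epsilon)$ for $e$ large, uniformly in $a$ and $n$.

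The central step is a telescoping construction. Using the monotonicity just noted and successive joins, I choose an increasing sequence of finite-rank projections $0=e_0\le e_1\le e_2\le\cdots$ with $e_k\uparrow I$ and errors $\epsilon_k\downarrow 0$ so that $\delta(e_k)\le\epsilon_k$. Put $E_k=e_k\otimes 1_n$, $f_k=e_k-e_{k-1}$, $G_k=f_k\otimes 1_n$, $w_k=E_kwE_k$ and $d_k=w_k-w_{k-1}$, so that $w=\sum_k d_k$ in norm, with $\|d_1\|\le 1$ and $\|d_k\|\le 2\epsilon_{k-1}$ for $k\ge 2$. The key structural fact is $E_{k-1}d_kE_{k-1}=0$ and $d_k=E_kd_kE_k$, so in the decomposition $E_kH^n=E_{k-1}H^n\oplus G_kH^n$ we have $d_k=\left(\begin{smallmatrix}0&x_k\\x_k^*&y_k\end{smallmatrix}\right)$ with $\|x_k\|,\|y_k\|\le\|d_k\|$. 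The elementary inequality $\left(\begin{smallmatrix}0&x\\x^*&0\end{smallmatrix}\right)\le\left(\begin{smallmatrix}\lambda^{-1}I&0\\0&\lambda x^*x\end{smallmatrix}\right)$ (coming from $2\,\mathrm{Re}\langle x\eta,\xi\rangle\le\lambda^{-1}\|\xi\|^2+\lambda\|x\eta\|^2$) applied here gives
$$d_k\le\lambda_k^{-1}E_{k-1}+\big(4\lambda_k\epsilon_{k-1}^2+2\epsilon_{k-1}\big)G_k\quad(k\ge 2),\qquad d_1\le G_1 .$$
Summing these positive bounds and collecting the coefficient of each $G_j$ produces $w\le c\otimes 1_n$, where $c=\sum_j\nu_j f_j$ with $\nu_1=1+\sum_{k>1}\lambda_k^{-1}$ and $\nu_j=\sum_{k>j}\lambda_k^{-1}+4\lambda_j\epsilon_{j-1}^2+2\epsilon_{j-1}$ for $j\ge 2$. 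Since these coefficients no longer involve $a$ or $n$, the operator $c$ is uniform.

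Finally I verify the two required properties. For compactness: the $f_j$ are mutually orthogonal and finite-rank, so it suffices that $\nu_j\to 0$, which holds because $\sum_{k>j}\lambda_k^{-1}$ is a convergent tail and the other terms vanish. For the norm: $\|c\|=\sup_j\nu_j$, and choosing $\lambda_k$ with $\sum_{k\ge 2}\lambda_k^{-1}\le\eta$ (e.g.\ $\lambda_k=2^k/\eta$) and then $e_k$ large enough that $4\lambda_k\epsilon_{k-1}^2+2\epsilon_{k-1}$ is small makes $\nu_1\le 1+\eta$ the dominant value, so $\|c\|\le 1+\eta$. I expect the main obstacle to be precisely the tension surfacing in this last step: complete compactness inevitably generates off-diagonal coupling mass, and if it were charged to the early (large) projections it would inflate $\|c\|$, while charging it carelessly would destroy compactness of $c$. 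The device of routing the coupling through the term $\lambda_k^{-1}E_{k-1}$ with a summable, rapidly growing sequence $(\lambda_k)$ — so that only the single top-level term $d_1$ contributes an $O(1)$ eigenvalue — is what simultaneously secures compactness and the bound $\|c\|\le 1+\eta$. The other delicate point, and the reason complete (rather than mere) compactness is needed, is the uniformity of $\delta(e_k)\le\epsilon_k$ across all amplification indices $n$.
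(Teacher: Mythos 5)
Your proof is correct, and while it shares the paper's first step, the main construction is genuinely different. Like the paper, you begin by converting complete compactness into a uniform two-sided compression estimate, using a finite-dimensional $F\subseteq\K(H)$ and the bound $\|\Psi\|_{\cb}\le(\dim F)\|\Psi\|$ from \cite[Corollary~2.2.4]{effros_ruan1}; this part is essentially identical. But the paper then iterates at the level of \emph{maps}: it writes $\Phi=q_1\Phi(\cdot)q_1+r_1$, uses Proposition~\ref{ccc} to see that the remainder $r_1$ is again a symmetric completely compact map with $\|r_1\|_{\cb}\le\epsilon/2$, and repeats, so that $\Phi^{(n)}(a)$ becomes a sum of \emph{full} compressions $(q_{i+1}\otimes1_n)r_i^{(n)}(a)(q_{i+1}\otimes1_n)$, each trivially dominated by $\|r_i\|_{\cb}(q_{i+1}\otimes1_n)$ with no off-diagonal terms; the resulting $c=q_1+\sum_i\epsilon2^{-i}q_{i+1}$ uses possibly non-orthogonal projections. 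You instead telescope the single element $w=\Phi^{(n)}(a)$ along one increasing sequence of projections, which forces you to confront the off-diagonal corners $x_k$, and your $2\times2$ positivity inequality weighted by a summable sequence $\lambda_k^{-1}$ is precisely the substitute for the paper's ``remainders are still completely compact'' step. Your route buys a $c$ that is diagonal over mutually orthogonal finite-rank pieces (making compactness and the norm bound transparent) and invokes complete compactness only for $\Phi$ itself; the paper's route buys a trivial positivity step at the cost of re-applying the approximation lemma to a new map at each stage. Two minor presentational points: you never actually need $e_k\uparrow I$, only $\delta(e_k)\to0$; and the monotonicity you invoke is of the \emph{upper bound} on $\delta(e)$ for a fixed $F$ rather than of $\delta$ itself, but your successive-joins construction handles this correctly, and the order of choices (fix $\eta$, then $\lambda_k=2^k/\eta$, then $\epsilon_{k-1}$ small enough that $4\lambda_k\epsilon_{k-1}^2+2\epsilon_{k-1}$ is summably small) is consistent.
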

\begin{proof}
We first show that for a given $\varepsilon > 0$ there exists a
finite rank projection $p$ on $H$ such that
\begin{equation}\label{phin}
\|\Phi^{(n)}(a)-(p\otimes 1_n)\Phi^{(n)}(a)(p\otimes 1_n)\|\leq
\varepsilon \ \text{ for any } a\in S_{{M_n(\A)}}.
\end{equation}
Since $\Phi$ is completely compact, there exists a finite
dimensional subspace $F\subset \K(H)$ such that
$\text{dist}(\Phi^{(n)}(a),M_n(F))\leq\varepsilon/3$ for any
$a\in{{M_n(\A)}}$, $\|a\|\leq 1$ and any $n\in\bb{N}$. Let $S_{F,
1+\varepsilon}=\{x\in F:\|x\|\leq 1+\varepsilon\}$ and let $k=\dim
F$.
Choose a finite rank projection~$p\in\K(H)$ such
that
\[\|x-pxp\|<\frac\epsilon{k(3+\epsilon)}\qtext{for all $x\in S_{F,1+\epsilon}$}\]
and let $\Psi:F\to \K(H)$ be defined by $\Psi(x)=x-pxp$.
By~\cite[Corollary~2.2.4]{effros_ruan1}, $\Psi$ is completely
bounded and $\|\Psi\|_{\cb}\leq k\|\Psi\|$. This implies that
\begin{equation*}\label{lemma19}
\|\Psi^{(n)}(y)\|\leq
k\|\Psi\|\,\|y\|\leq \frac{\varepsilon}{3+\epsilon}\|y\|\leq\frac\epsilon3
\end{equation*}
for all $y\in M_n(F)$ with $\|y\|\leq 1+\epsilon/3$.

Now for $a\in S_{M_n({\mathcal A})}^h$ let  $y\in M_n(F)$ be such
that $\|\Phi^{(n)}(a)-y\|\leq\varepsilon/3.$ Then $\|y\|\leq
\|\Phi^{(n)}(a)\|+\varepsilon/3\leq 1+\varepsilon/3$. Hence
\begin{align*}
&\|\Phi^{(n)}(a)-(p\otimes 1_n)\Phi^{(n)}(a)(p\otimes 1_n)\|\\
&\quad  \leq \|\Phi^{(n)}(a)-y\|+\|\Psi^{(n)}(y)\|+\|(p\otimes
1_n)(y-\Phi^{(n)}(a))(p\otimes 1_n)\|\\ &\quad \leq
\varepsilon/3+\varepsilon/3+\varepsilon/3=\varepsilon,
\end{align*}
proving~(\ref{phin}). Next we fix $\varepsilon>0$ and choose a
finite rank projection $q_1$ on $H$ such that
\begin{equation*}\label{Phin}
\|\Phi^{(n)}(a)-(q_1\otimes 1_n)\Phi^{(n)}(a)(q_1\otimes
1_n)\|\leq\frac{\varepsilon}{2}, \quad a\in M_n({\mathcal A}),\
\|a\|\leq 1,\  n\in\bb{N}.
\end{equation*}
Let $r_1 : \A\rightarrow\K(H)$ be the mapping given by
$r_1(a)=\Phi(a)-q_1\Phi(a)q_1$, $a\in\A$. Then
$r_1=\Psi\circ\Phi$, where $\Psi:\K(H)\to\K(H)$ is
the completely bounded map given by $\Psi(x)=x-q_1xq_1$. By
Proposition~\ref{ccc}, $r_1$ is completely compact. Moreover,
$\displaystyle \|r_1\|_{\cb}\leq \varepsilon/2$ and
$\Phi(a)=q_1\Phi(a)q_1+r_1(a)$, $a\in{\mathcal A}$. Proceeding by
induction, we can find sequences of finite rank projections $q_i$
and completely compact symmetric mappings $r_i$ such that
$\displaystyle \|r_i\|_{\cb}\leq \varepsilon/2^i$ and
$$\Phi(a)=q_1\Phi(a)q_1+\sum_{i=1}^{\infty}q_{i+1}r_i(a)q_{i+1},
\ \ a\in{\mathcal A}.$$ Let $c=
q_1+\sum_{i=1}^{\infty}\frac{\varepsilon}{2^i}q_{i+1}.$ We have
that $\Phi^{(n)}$ and $r_i^{(n)}$ are symmetric and
$$\Phi^{(n)}(a)=(q_1\otimes 1_n)\Phi^{(n)}(a)(q_1\otimes 1_n)+
\sum_{i=1}^{\infty}(q_{i+1}\otimes 1_n)r_i^{(n)}(a)(q_{i+1}\otimes
1_n),$$ for each $a\in{\mathcal A}$. Now
$$\Phi^{(n)}(a)\leq (q_1\otimes 1_n)\|\Phi\|_{\cb}+\sum_{i=1}^{\infty}
(q_{i+1}\otimes 1_n)\|r_i\|_{\cb}\leq
(q_1+\sum_{i=1}^{\infty}\frac{\varepsilon}{2^i}q_{i+1})\otimes
1_n=c\otimes 1_n$$ for all $a\in S_{M_n({\mathcal A})}^h$. By
construction, $c$ is compact and $\|c\|\leq 1+\varepsilon$.
\end{proof}

Let $H$ be an infinite dimensional separable Hilbert space and
$\{q_k\}_{k\in{\mathbb N}}$ be a family of pairwise orthogonal
projections in $\B(H)$ with $\text{rank }q_k = k$ and
$\sum_{k=1}^{\infty}q_k = I$. Set $p_n=\sum_{k=1}^{n} q_k$,
$n\in\bb{N}$. Let $\Phi_k : \B(q_k H)\to \B(q_k H)$, $k\in{\mathbb
N}$, be symmetric linear maps such that
\begin{equation}\label{cond}
\|\Phi_k\|_{\cb}=1,\quad \|\Phi_k\|\rightarrow 0\ \text{as $k\to\infty$,} \
\ \mbox{ and } \ \ \sum_{k=1}^{\infty}\|\Phi_k\|_2^2 < \infty,
\end{equation}
where $\|\Phi_k\|_2$ denotes the norm of the mapping
$\Phi_k$ when $\B(q_k H)\simeq \C_2(q_k H)$ is equipped with
the Hilbert-Schmidt norm.
Identifying $\B(q_kH)$ with $q_k\B(H)q_k$, let $\Phi : \K(H)\to \B(H)$
be the map given by the norm-convergent sum
\begin{equation}\label{exfi}
\Phi(x)=\sum_{k=1}^\infty\!\strut^\oplus\, \Phi_k(q_kxq_k),\quad x\in \K(H).
\end{equation}

An example of such a map is obtained by taking
$\Phi_k=k^{-1}\tau_k$ where $\tau_k$ is the transposition map
$\B(q_kH)\simeq M_k\to M_k\simeq \B(q_kH)$, which is symmetric and
an isometry for both the operator and the Hilbert-Schmidt norm. It
is well known (see e.g.~\cite[p.~419]{pisier-book}) that
$\|\tau_k\|_{\cb}= k$ and hence conditions (\ref{cond}) are
satisfied.

The next lemma is a straightforward extension of~\cite[pp.~32--34]{saar}.

\begin{lemma}\label{saar}
If $\Phi$ is a map satisfying~(\ref{cond}) and~(\ref{exfi}) 
then the range of $\Phi$ consists of compact operators. Moreover, $\Phi$ is
completely contractive and compact but not completely compact.
\end{lemma}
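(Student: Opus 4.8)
The plan is to exploit the block-diagonal structure of $\Phi$ relative to the orthogonal decomposition $H=\bigoplus_k q_kH$. Indeed $\Phi$ is the orthogonal direct sum of the maps $x\mapsto\Phi_k(q_kxq_k)$, each of which factors through the completely contractive compression $x\mapsto q_kxq_k$ onto the finite-dimensional subspace $q_kH$. The three ``positive'' assertions are then routine consequences of the hypotheses $\|\Phi_k\|_{\cb}=1$ and $\|\Phi_k\|\to0$, while the failure of complete compactness --- the one subtle point --- will be read off from Lemma~\ref{lemma-appendix} combined with the fact that the completely bounded norm of a symmetric map is attained on self-adjoint elements.

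First I would dispose of the easy assertions. Since $\dim q_kH=k<\infty$, every block $\Phi_k(q_kxq_k)$ has finite rank, and $\|\Phi_k(q_kxq_k)\|\le\|\Phi_k\|\,\|x\|\to0$; hence for $x\in\K(H)$ the orthogonal sum defining $\Phi(x)$ converges in norm and is a norm limit of finite-rank operators, so $\Phi(x)\in\K(H)$. This proves that the range of $\Phi$ consists of compact operators. Next, the truncations $\Phi_{[N]}(x)=\bigoplus_{k\le N}\Phi_k(q_kxq_k)$ are finite-rank maps and $\|\Phi-\Phi_{[N]}\|=\sup_{k>N}\|\Phi_k\|\to0$, so $\Phi$ is a norm limit of finite-rank maps and is therefore compact. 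For complete contractivity, observe that for $a\in M_n(\K(H))$ the operator $\Phi^{(n)}(a)$ is block diagonal on $\bigoplus_k(q_kH)^n$, its $k$-th block being $\Phi_k^{(n)}$ evaluated at the compression of $a$ into $M_n(\B(q_kH))$, an element of norm at most $\|a\|$; hence each block has norm at most $\|\Phi_k\|_{\cb}\,\|a\|\le\|a\|$, so $\|\Phi^{(n)}(a)\|\le\|a\|$, i.e.\ $\|\Phi\|_{\cb}\le1$. Finally $\Phi$ is symmetric because each $\Phi_k$ is.

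The hard part is to show that $\Phi$ is not completely compact, and I would argue by contradiction. Assume it is. Then $\Phi\colon\K(H)\to\K(H)$ is symmetric and completely contractive, so Lemma~\ref{lemma-appendix} provides a positive operator $c\in\K(H)$ with $\Phi^{(n)}(a)\le c\otimes1_n$ for all $a\in S^h_{M_n(\K(H))}$ and all $n\in\bb{N}$. Fix $k$ and let $a\in M_n(\B(q_kH))$ be self-adjoint with $\|a\|\le1$, viewed as an element of $M_n(\K(H))$ supported in the $k$-th block (here $\B(q_kH)=q_k\B(H)q_k\subseteq\K(H)$). Then $\Phi^{(n)}(a)=\Phi_k^{(n)}(a)$ is supported on $(q_kH)^n$, so compressing the inequality $\Phi^{(n)}(a)\le c\otimes1_n$ by $q_k\otimes1_n$ gives $\Phi_k^{(n)}(a)\le(q_kcq_k)\otimes1_n\le\|q_kcq_k\|\,I$. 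Replacing $a$ by $-a$ and using that $\Phi_k^{(n)}(a)$ is self-adjoint yields $\|\Phi_k^{(n)}(a)\|\le\|q_kcq_k\|$.

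Because $\Phi_k$ is symmetric, its cb norm is attained on self-adjoint elements: for $b\in M_n(\B(q_kH))$ with $\|b\|\le1$ the self-adjoint dilation $\left(\begin{smallmatrix}0&b\\b^*&0\end{smallmatrix}\right)$ has norm at most $1$ and its image under $\Phi_k^{(2n)}$ has norm exactly $\|\Phi_k^{(n)}(b)\|$, so $\|\Phi_k\|_{\cb}=\sup\{\|\Phi_k^{(m)}(a)\|:a\in S^h_{M_m(\B(q_kH))},\,m\in\bb{N}\}$. Taking the supremum in the previous estimate therefore gives $\|\Phi_k\|_{\cb}\le\|q_kcq_k\|$ for every $k$. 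But $c$ is compact and the $q_k$ are pairwise orthogonal finite-rank projections, whence $\|q_kcq_k\|\to0$ as $k\to\infty$, contradicting $\|\Phi_k\|_{\cb}=1$. This contradiction shows that $\Phi$ is not completely compact. I note that only $\|\Phi_k\|_{\cb}=1$ and $\|\Phi_k\|\to0$ from~(\ref{cond}) are used here; the summability of $\|\Phi_k\|_2^2$ is not needed for the present statement.
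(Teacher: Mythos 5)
Your proof is correct and follows essentially the same route as the paper: the easy assertions via the block-diagonal structure, and non-complete-compactness by feeding the compact majorant $c$ from Lemma~\ref{lemma-appendix} into the $k$-th block and contradicting $\|\Phi_k\|_{\cb}=1$ against $\|q_kcq_k\|\to0$. The only (harmless) variations are that you establish compactness by norm-approximating $\Phi$ with finite-rank truncations rather than by noting the image of the unit ball lies in a compact product set, and you justify that the cb norm of a symmetric map is attained on self-adjoint elements via the off-diagonal dilation rather than by invoking Smith's theorem to fix the matrix level at $k$.
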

\begin{proof}  Fix $x\in\K(H)$.
Since $\|\Phi_k\|\rightarrow_{k\to \infty} 0$ as we have
$p_n\Phi(x)p_n\rightarrow \Phi(x)$ in norm, so $\Phi(x)\in\K(H)$.
Each of the maps $x\mapsto \Phi_k(q_kxq_k)$ is completely
contractive, so $\Phi$ is completely contractive.

Next, note that $\Phi$ maps the unit ball of $\K(H)$ into
$U\defeq U_1\oplus U_2\oplus\cdots$, where $U_k$ is the
ball of radius $\|\Phi_k\|$ in $q_k\B(H)q_k$. Since $U$ is compact,
the map $\Phi$ is compact.

If $\Phi$ were completely compact then by Lemma~\ref{lemma-appendix},
there would exist a positive compact operator $c$ on $H$ such that
\begin{equation*}\label{cocomp1}
\Phi^{(k)}(x)\leq c\otimes 1_k\text{ for all } x\in
S_{M_k(\K(H))}^h \text{ and all }k\in{\mathbb N}.
\end{equation*}
Hence for every $k\in\bN$ and  $x\in S_{M_k(\B(H))}^h$,
\begin{equation*}\label{cocomp2}
\Phi_k^{(k)}((q_k\otimes 1_k)x(q_k\otimes
1_k)) = (q_k\otimes 1_k)\Phi^{(k)}(x)(q_k\otimes
1_k) \leq q_k c q_k\otimes 1_k.
\end{equation*}
However, $\|\Phi_k^{(k)}\|=\|\Phi_k\|_{\cb}=1$ by~\cite{smith83} and
$\Phi_k^{(k)}$ is symmetric, so
$$\|q_k c q_k\|=\|q_k c q_k\otimes 1_k\|\geq\sup\{\|\Phi_k^{(k)}(x)\|:{x\in
  S^h_{M_k(q_k\B(H)q_k)}}\}= 1,$$
which is impossible since~$c$ is compact.
\end{proof}

\begin{lemma}\label{HS}
Let $\C = \bigoplus^{c_0}_{k\in \bb{N}}\B(q_k H) \subseteq \K(H)$.
Then there exists $\nph\in M(\C^{\dd},\C)$ such that $\Phi =
\Phi_{\id(\nph)}$.
\end{lemma}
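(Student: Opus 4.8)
The plan is to construct $\nph$ blockwise using the canonical isometry $\theta$, to recognise its convergence in the minimal tensor product as the precise content of~(\ref{cond}), and then to invoke Theorem~\ref{ecb}. Throughout I represent $\C$ on $H$ and $\C^\dd$ on $H^\dd$ by their identity (hence faithful) representations; then $\Phi_{\id(\nph)}$ is a map on $\C_2((H^\dd)^\dd,H)=\C_2(H)$, extended to $\K(H)$, and is therefore directly comparable with the given $\Phi:\K(H)\to\K(H)$. For each $k$ consider the unitary $\theta=\theta_{q_kH^\dd,q_kH}$ from $q_kH^\dd\otimes q_kH$ onto $\C_2(q_kH)$; conjugation by $\theta$ identifies $\B(\C_2(q_kH))$ with $\B(q_kH^\dd\otimes q_kH)=\B(q_kH)^\dd\otimes\B(q_kH)$. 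I let $\nph_k\in\B(q_kH)^\dd\otimes\B(q_kH)$ be the element with $\theta\circ\nph_k\circ\theta^{-1}=\Phi_k$. Since for an elementary tensor $a^\dd\otimes b$ one has $\theta\circ(a^\dd\otimes b)\circ\theta^{-1}(T)=bTa$ (this is the $n=2$ instance of the formula defining $S_\nph$), the map $\Phi_{\id(\nph_k)}$ agrees with $\Phi_k$ on $\C_2(q_kH)$; as $\nph_k=(q_k^\dd\otimes q_k)\nph_k(q_k^\dd\otimes q_k)$, in fact $\Phi_{\id(\nph_k)}(T)=\Phi_k(q_kTq_k)$ for every $T$.

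The crucial numerical input is that, because $\theta$ is unitary, $\|\nph_k\|=\|\theta^{-1}\Phi_k\theta\|=\|\Phi_k\|_2$, the Hilbert--Schmidt operator norm of $\Phi_k$ appearing in~(\ref{cond}). Regarding $\B(q_kH)^\dd\otimes\B(q_kH)$ as the $(k,k)$-block of $\C^\dd\odot\C\subseteq\C^\dd\otimes\C$, the $\nph_k$ act on the pairwise orthogonal subspaces $q_kH^\dd\otimes q_kH$ of $H^\dd\otimes H$, so the minimal tensor norm computes blockwise: $\|\sum_{k=M}^N\nph_k\|=\max_{M\le k\le N}\|\Phi_k\|_2$. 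Since~(\ref{cond}) gives $\sum_k\|\Phi_k\|_2^2<\infty$ and in particular $\|\Phi_k\|_2\to0$, the partial sums are Cauchy and $\nph:=\sum_k\nph_k$ defines an element of $\C^\dd\otimes\C$.

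It then remains to verify $\Phi_{\id(\nph)}=\Phi$ and that $\nph$ is a multiplier. For finite-rank $T$, writing $\Psi_N=\sum_{k\le N}\nph_k$ I have $\Phi_{\id(\Psi_N)}(T)=\sum_{k\le N}\Phi_k(q_kTq_k)\to\Phi(T)$ by the norm-convergent definition~(\ref{exfi}), while~(\ref{rk4.3}) yields $\|\Phi_{\id(\nph-\Psi_N)}(T)\|_{\op}\le\|\nph-\Psi_N\|\,\|T\|_2\to0$; hence $\Phi_{\id(\nph)}(T)=\Phi(T)$, and this extends to all of $\K(H)$ by density. By Lemma~\ref{saar} the map $\Phi$ is completely contractive, so $\Phi_{\id(\nph)}=\Phi$ is completely bounded, and Theorem~\ref{ecb} (applied with the faithful identity representations) gives $\nph\in M(\C^\dd,\C)$ with $\Phi=\Phi_{\id(\nph)}$, as required.

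The main obstacle is establishing that $\sum_k\nph_k$ actually lies in the \emph{minimal} tensor product $\C^\dd\otimes\C$ rather than merely in some completed (e.g.\ extended Haagerup) tensor product: a generic completely bounded $\C'$-modular map need not arise from an element of $\C^\dd\otimes\C$. This is exactly where the hypothesis $\sum_k\|\Phi_k\|_2^2<\infty$ is indispensable, and the clean identity $\|\nph_k\|=\|\Phi_k\|_2$ furnished by the unitarity of $\theta$ is what converts that hypothesis into norm convergence of the blocks. The complete boundedness needed at the last step is not an obstacle, being supplied directly by Lemma~\ref{saar}.
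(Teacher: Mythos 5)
Your proposal is correct and follows essentially the same route as the paper: you build $\nph_k\in\B(q_kH)^\dd\otimes\B(q_kH)$ with $\Phi_{\id(\nph_k)}=\Phi_k$, use the unitarity of $\theta$ to get $\|\nph_k\|_{\min}=\|\Phi_k\|_2$ so that condition~(\ref{cond}) makes the partial sums Cauchy in $\C^\dd\otimes\C$, verify $\Phi_{\id(\nph)}=\Phi$ by a limiting argument, and conclude via Lemma~\ref{saar} and Theorem~\ref{ecb}. The only cosmetic differences are that you compute the block norm exactly as a maximum where the paper bounds it by the $\ell^2$-sum, and you check the identity $\Phi_{\id(\nph)}=\Phi$ on finite-rank operators via~(\ref{rk4.3}) rather than via the compressions $p_n\cdot p_n$.
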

\begin{proof}
Let $\nph_k\in \B(q_k H)^{\dd}\otimes\B(q_k H)$ be such that
$\Phi_{\id(\nph_k)} = \Phi_k$, $k\in\bb{N}$, where the family
$\{\Phi_k\}_{k=1}^{\infty}$ satisfies (\ref{cond}). Then
$\|\nph_k\|_{\min} = \|\Phi_k\|_2$. Let $\psi_n = \sum_{k=1}^{n}
\nph_k$. If $n < m$ then
 $\|\psi_m - \psi_n\|_{\min} = \|\sum_{k=n+1}^m
\Phi_k\|_2$ so
$$\|\psi_m
- \psi_n\|_{\min} \leq
\Big(\sum_{k=n+1}^{m}\|\Phi_k\|_2^2\Big)^{1/2}.$$
By (\ref{cond}), the sequence $\{\psi_n\}$ converges to an element
$\nph\in \C^{\dd}\otimes\C$. Moreover, for every $x\in \C_2(H)$ we
have
$$\Phi_{\id(\nph)}(x) = \lim_{n\rightarrow \infty}
p_n\Phi_{\id(\nph)}(x)p_n = \lim_{n\rightarrow \infty}
\Phi_{\id(\psi_n)}(x) = \Phi(x),$$ where the limits are in the
operator norm.
So $\Phi_{\id(\nph)} = \Phi$ which is completely contractive by
Lemma~\ref{saar}, so $\phi\in M(\C^\dd,\C)$ by Theorem~\ref{ecb}.
\end{proof}

Given C*-algebras $\A_i\subseteq\B(H_i)$, $i = 1,\dots,n$,
and $\psi=c_2\otimess c_{n-1}\in \A_2\odots\A_{n-1}$, we may
define a bounded linear map $\A_1\otimes\A_n\to \B_1\otimes
\A_2\otimess\A_n$, where $\B_1 = \A_1$ if $n$ is even and
$\B_1=\A_1^\dd$ if $n$ is odd, by
\[a\otimes b\mapsto
\begin{cases}
a\otimes \psi\otimes b\quad&\text{if $n$ is even,}\\
a^\dd\otimes \psi\otimes b\quad&\text{if $n$ is odd.}
\end{cases}
\]
We write $\iota_\psi$ for the restriction of this map to
$M(\A_1,\A_n)$.

\begin{lemma}\label{lem:iota}
  (i) The range of $\iota_{\psi}$ is contained in $M(\B_1,\A_2,\dots,\A_n)$.
  \smallskip

  (ii) $\iota_{\psi}(M_c^\id(\A_1,\A_{n}))\subseteq M_c^\id(\B_1,\A_2,\dots,\A_n)$.
  \smallskip

  (iii) Suppose that $n$ is even and $\omega\in (\B(H_{n-1}^{\dd})\ehaags
  \B(H_{2}))_*$.
  Writing \[M_{\omega}:\B(H_n)\ehaag \B(H_{n-1}^\dd)\ehaags
\B(H_2)\ehaag\B(H_1^\dd)\to \B(H_n)\ehaag\B(H_1^\dd) \]
for the ``middle slice map''
$M_\omega=R_\omega\ehaag\id_{\B(H_1^\dd)}$, we have
\[
M_\omega(u_{\iota_{\psi}(\phi)})=\omega(\tilde{\psi})u_\phi\]
where $\tilde{\psi} = c_{n-1}^\dd\otimess c_2$. The same is true,
mutatis mutandis, if $n$ is odd.
\end{lemma}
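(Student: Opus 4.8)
The plan is to prove the three parts of Lemma~\ref{lem:iota} in order, exploiting the symbolic calculus established in Theorem~\ref{symb} together with the characterisation of multipliers in Theorem~\ref{ecb}. The underlying principle throughout is that inserting a fixed elementary tensor $\psi=c_2\otimess c_{n-1}$ into the ``middle'' of a two-variable multiplier is a benign operation at the level of symbols: it amounts to sandwiching the associated map $\Phi$ by the fixed operators carried by $\psi$, which preserves boundedness and compactness and acts linearly on symbols.

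For part~(i), the natural strategy is to verify the defining property of a universal multiplier directly from Theorem~\ref{ecb}(iii): it suffices to exhibit faithful representations for which $\Phi_{\pi(\iota_\psi(\phi))}$ is completely bounded. Fixing faithful representations $\pi_i$ of each $\A_i$ and writing $C=\pi_2(c_2)\ldots\pi_{n-1}(c_{n-1})$ for the relevant product of operators, one checks on elementary tensors that $\Phi_{\pi(\iota_\psi(\phi))}$ factors as $\gamma(u)$ composed with insertion of the fixed middle operators; concretely, for $\phi=a\otimes b$ one computes that $\Phi_{\pi(\iota_\psi(\phi))}(T_{n-1}\otimess T_1)$ equals a product in which the slots corresponding to $c_2,\dots,c_{n-1}$ are filled by the constants $\pi_i(c_i)$. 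Since $\phi\in M(\A_1,\A_n)$ the two-variable map $\Phi_{\pi(\phi)}$ is completely bounded by Theorem~\ref{ecb}, and composing with the fixed constants is a completely bounded operation, so $\Phi_{\pi(\iota_\psi(\phi))}$ is completely bounded; Theorem~\ref{ecb} then gives $\iota_\psi(\phi)\in M(\B_1,\A_2,\dots,\A_n)$. One first checks this on the dense set $\A_1\odot\A_n$ and passes to general $\phi$ by the semi-weak approximation of Theorem~\ref{jtt}, as in the proof of Lemma~\ref{eqm}.

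Part~(ii) refines this observation: if $\Phi_{\id(\phi)}$ is compact, then so is $\Phi_{\id(\iota_\psi(\phi))}$, because the latter is obtained from the former by post- and pre-composition with the fixed bounded operators coming from $\psi$ (exactly the sandwiching that appears in Lemma~\ref{eqm}), and compactness is preserved under composition with bounded maps on either side. The cleanest route is to identify $\Phi_{\id(\iota_\psi(\phi))}$ with a map of the form $x\mapsto B\,\Phi_{\id(\phi)}(x)\,A$ (or an insertion map followed by $\Phi_{\id(\phi)}$), and then invoke the standard fact that the composition of a compact map with bounded maps is compact. Part~(iii) is the computational heart: one must show $M_\omega(u_{\iota_\psi(\phi)})=\omega(\tilde\psi)\,u_\phi$. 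Here the plan is to evaluate both sides against vector functionals using Lemma~\ref{sli} and the slice-map formula~(\ref{eq:slices}), reducing everything to elementary tensors where the symbol is explicit by Theorem~\ref{symb}: for $\phi=a\otimes b$ one has $u_\phi=b\otimes a^o$ and $u_{\iota_\psi(\phi)}=b\otimes c_{n-1}^o\otimess c_2^o\otimes a^o$, and applying the middle slice $M_\omega=R_\omega\ehaag\id$ pulls out the scalar $\omega(c_{n-1}^\dd\otimess c_2)=\omega(\tilde\psi)$, leaving $b\otimes a^o=u_\phi$. Linearity extends this to $\A_1\odot\A_n$, and the density/approximation argument of Theorem~\ref{jtt}, together with the strong continuity of slice maps on bounded sets used in Lemma~\ref{sli}, promotes it to all $\phi\in M(\A_1,\A_n)$.

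\textbf{The main obstacle} I anticipate is part~(iii), specifically justifying the passage from elementary tensors to arbitrary multipliers. The equality of symbols is an equality of elements of an extended Haagerup tensor product, and $M_\omega$ is only a slice map, so one cannot simply take norm limits; instead the argument must be carried out weakly, testing both sides against vector functionals $\omega_1,\dots,\omega_n$ and invoking the semi-weak convergence $\phi_\nu\to\phi$ together with the strong convergence of the block-matrix representations $A_i^\nu\to A_i$ furnished by Theorem~\ref{jtt}. The delicate point is that the middle slice map interacts correctly with these limits; one must check, as in Lemma~\ref{sli}, that the relevant products of slices converge in the appropriate topology so that $\langle M_\omega(u_{\iota_\psi(\phi)}),\omega_n\otimes\omega_1\rangle$ may be computed as a genuine limit of the elementary-tensor computations. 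Once this weak identification is in place, the injectivity of the pairing against vector functionals (which span a predual-dense subspace, by Lemma~\ref{lem:eh-uniq}) forces the stated identity of symbols.
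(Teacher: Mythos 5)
Your proposal is correct and follows essentially the same route as the paper: both arguments rest on the approximating nets of Theorem~\ref{jtt}, establish the insertion/factorisation formula $\Phi_{\iota_\psi(\phi)}(T_{n-1}\otimess T_1)=\Phi_\phi(T_{n-1}c_{n-1}^\dd\cdots c_2T_1)$ on $\A_1\odot\A_n$ and extend it by semi-weak limits, and deduce (ii) from stability of compactness under composition and (iii) from the resulting explicit form of the symbol. The only cosmetic difference is in (iii), where the paper reads off $u_{\iota_\psi(\phi)}=A_2\odot(c_{n-1}^\dd\otimes 1)\odot\cdots\odot(c_2\otimes 1)\odot A_1$ directly from the weak limit and then slices, whereas you test both sides against vector functionals and invoke Lemma~\ref{lem:eh-uniq}-type density; both are valid realisations of the same computation.
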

\begin{proof}
Let $\nph\in M(\A_1,\A_n)$. By Theorem~\ref{jtt}, there
exist a net $\{\varphi_\nu\}\subseteq {\A_1}\odot\A_n$ and
representations $u_{\varphi_\nu}^{\id}=A_2^{\nu}\odot A_1^\nu$ and
$u_\varphi^{\id}=A_2\odot A_1$, where $A_i^\nu$ are finite
matrices with entries in $\A_1^{\dd}$ if $i=1$ and in $\A_n$ if
$i=2$, such that $\varphi_\nu\to\varphi$ semi-weakly, $A_i^\nu\to
A_i$ strongly and $\sup_{i,\nu}\|A_i^\nu\| < \infty$.

\smallskip

(i) It is easy to see that $\iota_{\psi}(\phi_\nu)$ satisfies the
boundedness conditions of Theorem~\ref{jtt} and converges
semi-weakly to $\iota_{\psi}(\phi)$, which is therefore a
universal multiplier.

\smallskip

(ii)
  Suppose that $n$ is even and let $\iota=\iota_{\psi}$.  It is
  immediate to
  check that if $\varphi\in \A_1\odot\A_n$ and
  $T_1\in\K(H_1^{\dd},H_2)$, $\dots$, $T_{n-1}\in
  \K(H_{n-1}^{\dd},H_n)$ then
\begin{equation*}%
\Phi_{\iota(\varphi)}(T_{n-1}\otimes\ldots\otimes T_1)=
\Phi_{\varphi}(T_{n-1}c_{n-1}^{\dd}\ldots c_2T_1).
\end{equation*}
Note that %
this equation holds for any $\varphi\in
M(\A_1,\A_n)$ since $\Phi_{\varphi_\nu}(T)\to \Phi_{\varphi}(T)$
and $\Phi_{\iota(\varphi_\nu)}(T_{n-1}\otimes\ldots\otimes T_1)\to
\Phi_{\iota(\varphi)}(T_{n-1}\otimes\ldots\otimes T_1)$ weakly for
any $T$, $T_1,\ldots, T_{n-1}$. Since $\Phi_{\iota(\varphi)}$ is
the composition of the bounded mapping
$X_{n-1}\otimes\ldots\otimes X_1\mapsto X_{n-1}c_{n-1}^{\dd}\ldots
c_2X_1$ with $\Phi_{\varphi}$, it follows that if $\varphi$ is a
compact operator multiplier then so is $\iota(\varphi)$.

\smallskip

(iii)  We have that
\begin{align*}
\Phi_{\iota(\varphi_\nu)}(T_{n-1}\otimes\ldots\otimes
T_1)&=A_2^\nu(T_{n-1}\otimes 1)(c_{n-1}^{\dd}\otimes 1)\ldots
(c_2\otimes 1)(T_1\otimes 1)A_1^\nu\\&\to A_2(T_{n-1}\otimes
1)(c_{n-1}^{\dd}\otimes 1)\ldots(c_2\otimes 1)(T_1\otimes 1)A_1
\end{align*}
weakly. On the other hand,
$\Phi_{\iota(\varphi_\nu)}(T_{n-1}\otimes\ldots\otimes T_1)\to
\Phi_{\iota(\varphi)}(T_{n-1}\otimes\ldots\otimes T_1)$ which
implies that $u_{\iota(\varphi)}=A_2\odot (c_{n-1}^{\dd}\otimes
1)\odot\ldots\odot (c_2\otimes 1)\odot A_1$. It follows that
$M_\omega(u_{\iota(\varphi)})=\omega(\tilde{\psi})u_\varphi$.
\end{proof}

\begin{theorem}\label{gen}
Let $\As$ be C*-algebras with the property that both $\K(\A_1)$
and $\K(\A_n)$ contain full matrix algebras of arbitrarily large
sizes. Then the inclusion $M_{cc}(\As)\subseteq M_c(\As)$ is
proper.
\end{theorem}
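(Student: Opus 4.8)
The plan is to transplant Saar's compact, not completely compact map (Lemma~\ref{saar}) into $M(\As)$: first as a two-variable multiplier $\phi_0$ over the outer algebras $\A_1,\A_n$, and then as an $n$-variable multiplier $\phi=\iota_{\psi}(\phi_0)$ obtained by padding with the map $\iota_\psi$ of Lemma~\ref{lem:iota}. Padding preserves compactness by Lemma~\ref{lem:iota}(ii), so the whole difficulty is to see that it cannot create complete compactness, which I will detect at the level of symbols using the middle slice map of Lemma~\ref{lem:iota}(iii).

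Treating the even case (the odd case being analogous), I would first build the configuration. Since $\K(\A_1)$ and $\K(\A_n)$ are $c_0$-sums of elementary C*-algebras containing full matrix algebras of arbitrarily large sizes, I can choose a sequence $d_k\to\infty$ with $\sum_k d_k^{-2}<\infty$ together with mutually orthogonal matrix subalgebras, giving $*$-monomorphisms of $\C:=\bigoplus^{c_0}_k M_{d_k}$ into $\K(\A_1)$ and $\K(\A_n)$. As every element of $\C$ is compact and $\C^{\dd}\cong\C^{o}\cong\C$, Saar's example lives over these copies: taking $\Phi_k=d_k^{-1}\tau_{d_k}$ gives $\|\Phi_k\|_{\cb}=1$, $\|\Phi_k\|\to0$ and $\sum_k\|\Phi_k\|_2^2=\sum_k d_k^{-2}<\infty$, so~(\ref{cond}) holds and Lemma~\ref{HS} produces a multiplier $\phi_0\in M(\C^{\dd},\C)$ with $\Phi_{\id(\phi_0)}$ equal to Saar's map. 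Realising all the $\A_i$ by their reduced atomic representations $\A_i\subseteq\B(H_i)$ (so that $\C$ acts by compact operators), the induced inclusion $M(\C^{\dd},\C)\hookrightarrow M(\A_1,\A_n)$ makes $\phi_0$ a two-variable multiplier whose associated map is Saar's map extended by zero; this map is compact, so $\phi_0\in M^{\id}_c(\A_1,\A_n)$, but it is not completely compact, and Saar's map is recovered from it as a compression, so by Proposition~\ref{ccc} and Theorem~\ref{charcn} (with $n=2$) we have $\phi_0\notin M_{cc}(\A_1,\A_n)$, i.e.\ $u_{\phi_0}\notin\K(\A_n)\haag\K(\A_1^o)$.

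Now fix nonzero $c_i\in\A_i$ for $i=2,\dots,n-1$, put $\psi=c_2\otimess c_{n-1}$, and set $\phi:=\iota_{\psi}(\phi_0)$. By Lemma~\ref{lem:iota}(i),(ii), $\phi\in M(\As)$ and $\phi\in M^{\id}_c(\As)\subseteq M_c(\As)$. Suppose, for contradiction, that $\phi\in M_{cc}(\As)$. By Theorem~\ref{charcn}(ii) its symbol satisfies $u_{\phi}\in\K(\A_n)\haag(\A_{n-1}^o\ehaags\A_2)\haag\K(\A_1^o)$. Choose $\omega\in(\B(H_{n-1}^{\dd})\ehaags\B(H_2))_*$ with $\omega(\tilde\psi)\neq0$, which is possible because $\tilde\psi=c_{n-1}^{\dd}\otimess c_2\neq0$, and apply the middle slice map $M_\omega$ of Lemma~\ref{lem:iota}(iii). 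On elementary tensors $M_\omega$ agrees with the functorial middle slice $\id\haag\omega\haag\id$, which by functoriality of the Haagerup tensor product carries $\K(\A_n)\haag(\A_{n-1}^o\ehaags\A_2)\haag\K(\A_1^o)$ into $\K(\A_n)\haag\K(\A_1^o)$; hence so does $M_\omega$ on the completely compact symbol space. But Lemma~\ref{lem:iota}(iii) gives $M_\omega(u_{\phi})=\omega(\tilde\psi)\,u_{\phi_0}$, so $u_{\phi_0}\in\K(\A_n)\haag\K(\A_1^o)$, contradicting the previous paragraph. Therefore $\phi\in M_c(\As)\setminus M_{cc}(\As)$ and the inclusion is proper.

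The step I expect to be the main obstacle is recovering the two-variable symbol of Saar's example in the \emph{Haagerup} corner space $\K(\A_n)\haag\K(\A_1^o)$, rather than merely in the extended Haagerup space $\K(\A_n)\ehaag\K(\A_1^o)$ that compactness alone provides (Proposition~\ref{ehaag}); the gain comes entirely from the $\haag$-structure at the corners of the completely compact symbol space in Theorem~\ref{charcn} being preserved by the functorial slice $\id\haag\omega\haag\id$. Verifying that $M_\omega$ restricts to this slice on the completely compact subspace, and handling the bookkeeping of opposite algebras and adjoints in the embedding of $\C$, are where the care is required.
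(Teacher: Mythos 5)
Your proposal is correct and follows essentially the same route as the paper: the $n=2$ case is settled by transplanting Saar's compact, non--completely-compact map into $M(\C^{\dd},\C)\subseteq M(\A_1,\A_n)$ via Lemma~\ref{HS} and Lemma~\ref{saar}, and the general case is reduced to $n=2$ by padding with $\iota_\psi$ and applying the middle slice map of Lemma~\ref{lem:iota}~(iii) to the symbol characterisation of Theorem~\ref{charcn}. The point you flag as the main obstacle --- that $M_\omega$ carries $\K(\A_n)\haag(\A_{n-1}^o\ehaags\A_2)\haag\K(\A_1^o)$ into $\K(\A_n)\haag\K(\A_1^o)$ --- is exactly how the paper closes the argument, and your functoriality justification for it is sound.
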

\begin{proof} We may assume that $\A_i\subseteq\B(H_i)$, $i =
1,\dots,n$ for some Hilbert spaces $\Hs$. First suppose that $n =
2$ and let $H$ be an infinite dimensional separable Hilbert space
with $H^{\dd}\subseteq H_1$ and $H\subseteq H_2$. Let $\C\simeq
\bigoplus^{c_0}_{k\in\bb{N}} M_{k}$ be the C*-algebra from
Lemma~\ref{HS}. Then $\C^{\dd}\subseteq \A_1$ and $\C\subseteq
\A_2$. By the injectivity of the minimal tensor product of
C*-algebras, $\C^{\dd}\otimes\C\subseteq \A_1\otimes\A_2$.

Let $\nph\in \C^{\dd}\otimes\C$ be given by Lemma~\ref{HS}. It
follows from Lemma~\ref{saar} that $\nph\in
M_c(\A_1,\A_2)\setminus M_{cc}^\id(\A_1,\A_2)$. Since faithful
representations of $\A_1$ and $\A_2$ restrict to representations
of $\C$ containing the identity subrepresentation up to unitary
equivalence, we have that $\phi\in M_c(\A_1,\A_2)\setminus
M_{cc}(\A_1,\A_2)$.

Suppose now that $n$ is even. Let $\varphi\in
M_c(\A_1,\A_n)\setminus M_{cc}(\A_1,\A_n)$, fix any non-zero $\psi
= c_2\otimess c_{n-1}\in \A_2\odots\A_{n-1}$ and let us write
$\iota=\iota_{\psi}$.  Suppose that $\iota(\varphi)$ is a
completely compact multiplier. By Theorem~\ref{charcn},
$u_{\iota(\varphi)}\in \K(\A_n) \haag(\A_{n-1}^o\ehaags\A_2)\haag
\K(\A_1^o)$.

Let $\tilde{\psi} = c_{n-1}^\dd\otimess c_2\in
\A_{n-1}^\dd\ehaags\A_2$ and fix $\omega\in
(\B(H_{n-1}^{\dd})\ehaags \B(H_{2}))_*$ such that
$\omega(\tilde{\psi})\neq 0$. By Lemma~\ref{lem:iota}~(iii),
$M_{\omega}(u_{\iota(\varphi)})=\omega(\tilde{\psi})u_{\varphi}$
and hence $u_{\varphi}\in \K(\A_n)\haag \K(\A_1^o)$ which by
Theorem~\ref{charcn} contradicts the assumption that $\varphi$ is
not a completely compact multiplier.

If $n$ is odd then the same proof works with minor modifications.
\end{proof}

\begin{romanremark}
\label{rsmith}
We do not know whether the sets $M_{cc}(\A,\B)$ and $M_{c}(\A,\B)$ are
distinct if $\K(\A)$ contains matrix algebras of arbitrarily large
sizes, while $\K(\B)$ does not (and vice versa). 
To show that
the inclusion $M_{cc}(\C,c_0) \subseteq M_{c}(\C,c_0)$ is proper it
would suffice to exhibit mappings $\Phi_k : M_k\rightarrow M_k$ which
satisfy~(\ref{cond}) and are left $D_k$-modular (where $D_k$ is the
subalgebra of all diagonal matrices of $M_k$).  This modularity
condition would enable us to find $\nph_k\in M_k^\dd\otimes D_k$ such
that $\Phi_k=\Phi_{\id(\nph_k)}$ using the method of Lemma~\ref{HS}
and we could then conclude from Lemma~\ref{saar}
that $M_{cc}(\C,c_0)\subsetneq M_c(\C,c_0)$.
However, we do not know if such mappings $\Phi_k$ exist.
\end{romanremark}

\subsection{Automatic compactness}

We now turn to the question of when every universal multiplier is
automatically compact. We will restrict to the case $n = 2$ for
the rest of the paper. We will first establish an auxiliary result
in a different but related setting. Suppose that $\A$ and $\B$ are
commutative C*-algebras and assume that $\A = C_0(X)$ and $\B =
C_0(Y)$ for some locally compact Hausdorff spaces $X$ and $Y$. The
C*-algebra $C_0(X)\otimes C_0(Y)$ will be identified with
$C_0(X\times Y)$ and $M(\A,\B)$ with a subset of $C_0(X\times Y)$.
Elements of the Haagerup tensor product $C_0(X)\haag C_0(Y)$, as
well as of the projective tensor product
$C_0(X)\hat{\otimes}C_0(Y)$, will be identified with functions in
$C_0(X\times Y)$ in the natural way. Note that, by Grothendieck's
inequality, $C_0(X)\haag C_0(Y)$ and $C_0(X)\hat{\otimes}C_0(Y)$
coincide as sets of functions.

\begin{proposition}\label{co}
  Let $X$ and $Y$ be locally compact infinite Hausdorff spaces. Then
  $C_0(X)\haag C_0(Y)\subseteq M(C_0(X),C_0(Y))$ and this inclusion is
  proper.
\end{proposition}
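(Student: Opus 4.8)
The plan is to prove the two assertions separately, using the approximation theorem for universal multipliers (Theorem~\ref{jtt}) as the main engine, and locating the properness in the gap between the Haagerup and the extended Haagerup tensor products.

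For the inclusion I would begin from the standard description of a general element of the Haagerup tensor product of two $C^*$-algebras: any $\phi\in C_0(X)\haag C_0(Y)$ has a representation $\phi=\sum_k p_k\otimes q_k$ with $p_k\in C_0(X)$, $q_k\in C_0(Y)$ for which $\sum_k p_kp_k^*$ and $\sum_k q_k^*q_k$ converge in norm. Truncating gives $\phi_N=\sum_{k\le N}p_k\otimes q_k\in C_0(X)\odot C_0(Y)$, and the estimate $\|\phi-\phi_N\|_\hh\le\|\sum_{k>N}p_kp_k^*\|^{1/2}\|\sum_{k>N}q_k^*q_k\|^{1/2}\to0$ shows $\phi_N\to\phi$ in the Haagerup norm, hence uniformly as functions and in particular semi-weakly. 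Writing $\phi_N=A_1^N\odot A_2^N$ with $A_1^N=[p_1,\dots,p_N]$ a row and $A_2^N=[q_1,\dots,q_N]^t$ a column, the norms $\|A_1^N\|=\|\sum_{k\le N}p_kp_k^*\|^{1/2}$ and $\|A_2^N\|=\|\sum_{k\le N}q_k^*q_k\|^{1/2}$ are uniformly bounded. Thus $\{\phi_N\}$ satisfies condition~(ii) of Theorem~\ref{jtt}, whose implication (ii)$\Rightarrow$(i) yields $\phi\in M(C_0(X),C_0(Y))$.

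For properness I would use that $X$ and $Y$ are infinite, so $C_0(X)$ and $C_0(Y)$ are infinite-dimensional and each contains a sequence of positive norm-one elements with pairwise disjoint supports, $\{f_i\}\subseteq C_0(X)$ and $\{g_i\}\subseteq C_0(Y)$. Setting $\phi=\sum_i f_i\otimes g_i$, the partial sums $\phi_N=\sum_{i\le N}f_i\otimes g_i$ converge to $\phi$ semi-weakly, since at each point of $X\times Y$ only one summand is nonzero so the tails vanish pointwise, while the row $[f_1,\dots,f_N]$ and column $[g_1,\dots,g_N]^t$ have norms $\|\sum_{i\le N}|f_i|^2\|^{1/2}=1$ and $\|\sum_{i\le N}|g_i|^2\|^{1/2}=1$. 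Theorem~\ref{jtt} again gives $\phi\in M(C_0(X),C_0(Y))$, and it remains to show $\phi\notin C_0(X)\haag C_0(Y)$. This is the crux. Suppose toward a contradiction that $\phi=\sum_k p_k\otimes q_k$ is a norm-convergent Haagerup representation, and put $P=\|\sum_k|p_k|^2\|_\infty^{1/2}$. Choosing $y_i$ with $g_i(y_i)=1$ and evaluating the second variable gives $f_i=\phi(\cdot,y_i)=\sum_k q_k(y_i)p_k$, where the pointwise Cauchy--Schwarz inequality yields the crucial uniform tail bound $\|\sum_{k>N}q_k(y_i)p_k\|_\infty\le\big(\sum_{k>N}|q_k(y_i)|^2\big)^{1/2}\|\sum_{k>N}|p_k|^2\|_\infty^{1/2}\le\|\sum_{k>N}|q_k|^2\|_\infty^{1/2}\,P$, which is independent of $i$ and tends to $0$. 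Hence for suitable $N$ every $f_i$ lies within distance $\tfrac18$ of the single finite-dimensional space $\mathrm{span}\{p_1,\dots,p_N\}$; but the $f_i$ are pairwise orthogonal norm-one functions, so $\|f_i-f_j\|_\infty=1$ for $i\ne j$, and an infinite $1$-separated set cannot sit in a small neighbourhood of a finite-dimensional subspace (otherwise two of the approximants would cluster to within $<\tfrac12$, forcing $\|f_i-f_j\|_\infty<1$). This contradiction shows $\phi\notin C_0(X)\haag C_0(Y)$, so the inclusion is proper.

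The step I expect to be the main obstacle is precisely this last one: separating membership in the Haagerup tensor product from membership in the extended one. The whole force of the argument rests on the uniformity in $i$ of the tail estimate, which is exactly what forces all the $f_i$ simultaneously into one finite-dimensional space and collides with their mutual orthogonality; the orthogonal family $\{f_i\}$ survives as a bounded (weak$^*$-convergent) row-and-column product but fails the norm-convergence built into the Haagerup norm. I would also isolate as a lemma the elementary but essential fact that an infinite-dimensional commutative $C^*$-algebra contains an infinite family of pairwise orthogonal positive norm-one elements, which is what the infinitude of $X$ and $Y$ provides.
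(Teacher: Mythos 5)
Your proof of the inclusion $C_0(X)\haag C_0(Y)\subseteq M(C_0(X),C_0(Y))$ is essentially sound and takes a different route from the paper, which simply cites Corollary~6.7 of Kissin--Shulman: you instead take a representation $\phi=\sum_k p_k\otimes q_k$ with $\sum_k|p_k|^2$ and $\sum_k|q_k|^2$ norm convergent and feed the truncations into Theorem~\ref{jtt}. This is legitimate, since the Haagerup norm dominates the minimal C*-norm, so $\phi$ genuinely lies in $C_0(X)\otimes C_0(Y)=C_0(X\times Y)$ as the hypothesis of Theorem~\ref{jtt} requires.

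The properness argument, however, has a fatal gap: the element $\phi=\sum_i f_i\otimes g_i$ you construct is not an element of $C_0(X)\otimes C_0(Y)=C_0(X\times Y)$, and hence is not in $M(C_0(X),C_0(Y))$ at all, because by definition $M(\A,\B)\subseteq \A\otimes\B$. Concretely, choose $x_i,y_i$ with $f_i(x_i)=g_i(y_i)=1$; then $\phi(x_i,y_i)=1$ for every $i$, while the pairwise disjoint open sets $\{f_i>0\}\times\{g_i>0\}$ cover the level set $\{\phi\geq 1/2\}$ and each one is needed to cover $(x_i,y_i)$, so that level set is not compact and $\phi$ fails to vanish at infinity. (For $X=Y=\bb{N}$ this is just the observation that the diagonal function $\delta_{ij}$ does not lie in $c_0(\bb{N}\times\bb{N})$; for $X=Y=\{0\}\cup\{1/n:n\in\bb{N}\}$ with $f_n=g_n$ the indicator of $\{1/n\}$, the pointwise sum is not even continuous at $(0,0)$.) Consequently Theorem~\ref{jtt} cannot be invoked to conclude $\phi\in M(C_0(X),C_0(Y))$: its implication (ii)$\Rightarrow$(i) presupposes that $\phi$ already lies in the minimal tensor product, and semi-weak convergence of a bounded net of elementary-tensor sums does not place the limit there. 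Your verification that this $\phi$ would not lie in $C_0(X)\haag C_0(Y)$ is correct as far as it goes, but it proves nothing about the stated inclusion, whose both sides live inside $C_0(X\times Y)$.

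Moreover the approach cannot be rescued by rescaling the orthogonal families: if $\sum_i f_i\otimes g_i$ is to lie in $C_0(X\times Y)$ then the same level-set argument forces $\|f_i\|\,\|g_i\|\to 0$, and writing $f_i\otimes g_i=(c_if_i)\otimes(g_i/c_i)$ with $c_i=(\|g_i\|/\|f_i\|)^{1/2}$ shows the tails then have Haagerup norm at most $\sup_{i>N}\|f_i\|\,\|g_i\|\to 0$, so every such ``diagonal'' element already belongs to $C_0(X)\haag C_0(Y)$. Producing a function in $C_0(X\times Y)$ that is a universal multiplier but lies outside the Haagerup tensor product is genuinely harder; the paper imports it from harmonic analysis, namely Theorem~11.9.1 of Graham--McGehee, which supplies a uniform limit of a $\|\cdot\|_{\hh}$-bounded sequence in $C(X)\haag C(Y)$ that is not itself in $C(X)\haag C(Y)$, and then uses the Kissin--Shulman criterion to see that such a limit is still a universal multiplier (passing to one-point compactifications to handle the non-compact case). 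Some input of this kind is the missing idea in your argument.
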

\begin{proof} The inclusion $C_0(X)\haag C_0(Y)\subseteq
M(C_0(X),C_0(Y))$ follows from Corollary~6.7 of~\cite{ks}. To show
that this inclusion is proper, suppose first that $X$ and $Y$ are
compact. By Theorem~11.9.1 of~\cite{gmcg}, there exists a sequence
$(f_i)_{i=1}^{\infty}\subseteq C(X)\haag C(Y)$ such that
$\sup_{i\in\bb{N}}\|f_i\|_\hh < \infty$ converging uniformly to a
function $f\in C(X\times Y)\setminus C(X)\haag C(Y)$. By
Corollary~6.7 of~\cite{ks}, $f\in M(C(X),C(Y))$. The conclusion
now follows.

Now assume that both $X$ and $Y$ are locally compact but not
compact (the case where one of the spaces is compact while the
other is not is similar). Let $\tilde{X} = X\cup\{\infty\}$ and
$\tilde{Y} = Y\cup\{\infty\}$ be the one point compactifications
of $X$ and $Y$. Then $C(\tilde{X}) = C_0(X) + \bb{C}1$ and
$C(\tilde{Y}) = C_0(Y) + \bb{C}1$, where $1$ denotes the constant
function taking the value one. Moreover, it is easy to see that
$$C(\tilde{X})\otimes C(\tilde{Y}) = C_0(X\times Y) + C_0(X) +
C_0(Y) + \bb{C}1$$ and
\begin{equation}\label{proje}
C(\tilde{X})\hat{\otimes} C(\tilde{Y}) = C_0(X)\hat{\otimes}
C_0(Y) + C_0(X) + C_0(Y) + \bb{C}1.
\end{equation}
By the first part of the proof, there exists $\nph\in
M(C(\tilde{X}),C(\tilde{Y}))\setminus C(\tilde{X})\haag
C(\tilde{Y})$. Write $\nph = \nph_1 + \nph_2 + \nph_3 + \nph_4$
where $\nph_1 \in C_0(X\times Y)$, $\nph_2\in C_0(X)$, $\nph_3 \in
C_0(Y)$ and $\nph_4 \in \bb{C}1$. Suppose that $\nph_1\in
C_0(X)\haag C_0(Y)$. By (\ref{proje}), $\nph\in
C(\tilde{X})\hat{\otimes} C(\tilde{Y})$, a contradiction.
\end{proof}

\begin{theorem}\label{gen2}
Let $\A$ and $\B$ be C*-algebras. The following are equivalent:

\noindent
(i)\ \ either $\A$ is finite dimensional and $\K(\B) = \B$, or $\B$ is finite dimensional and $\K(\A) = \A$;

\noindent
(ii)\ $M_c(\A,\B) = M(\A,\B)$;

\noindent
(iii) $M_{cc}(\A,\B)=M(\A,\B)$.
\end{theorem}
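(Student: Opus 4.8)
The plan is to establish the cycle (i)$\Rightarrow$(iii)$\Rightarrow$(ii)$\Rightarrow$(i), assuming throughout that $\A,\B\neq0$ (the degenerate cases being trivial). The implication (iii)$\Rightarrow$(ii) is immediate, since $M_{cc}(\A,\B)\subseteq M_c(\A,\B)\subseteq M(\A,\B)$, so that $M_{cc}=M$ forces $M_c=M$. The two tools I would rely on are the symbol isometry of Theorem~\ref{symb}, which identifies $M(\A,\B)$ with $\B\ehaag\A^o$ via $\phi\mapsto u_\phi$ (so that $u_{a\otimes b}=b\otimes a^o$ and $\|\phi\|_{\mm}=\|u_\phi\|_{\eh}$), and the symbol criterion of Theorem~\ref{charcn}, by which $\phi\in M_{cc}(\A,\B)$ precisely when $u_\phi\in\K(\B)\haag\K(\A^o)$.

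For (i)$\Rightarrow$(iii) I treat the case where $\A$ is finite dimensional and $\K(\B)=\B$ (the other case is symmetric). Then $\A=\K(\A)$, hence $\A^o=\K(\A^o)$, so $M(\A,\B)=\B\ehaag\A^o=\K(\B)\ehaag\K(\A^o)$. The crucial observation is that when one factor is finite dimensional the extended Haagerup and Haagerup tensor products agree, giving $\K(\B)\ehaag\K(\A^o)=\K(\B)\haag\K(\A^o)$. Thus every $\phi\in M(\A,\B)$ satisfies $u_\phi\in\K(\B)\haag\K(\A^o)$, and Theorem~\ref{charcn} yields $\phi\in M_{cc}(\A,\B)$; hence $M_{cc}(\A,\B)=M(\A,\B)$.

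For (ii)$\Rightarrow$(i) I assume $M_c(\A,\B)=M(\A,\B)$ and argue in two steps. First I would show this forces $\K(\A)=\A$ and $\K(\B)=\B$: were there $a\in\A\setminus\K(\A)$, then for any nonzero $b\in\B$ the multiplier $\phi=a\otimes b\in M(\A,\B)$ would have symbol $u_\phi=b\otimes a^o$ with $a^o\notin\K(\A^o)$; applying a left slice $L_\omega$ with $\omega(b)\neq0$ shows $u_\phi\notin\K(\B)\ehaag\K(\A^o)$, so $\phi\notin M_c(\A,\B)$ by Proposition~\ref{ehaag}, contradicting $M_c=M$. The argument for $\B$ is symmetric.

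The second step, which I expect to be the main obstacle, is to show that one of $\A,\B$ is finite dimensional. The difficulty is that $M_c$ is defined through the existence of \emph{some} faithful representation making the associated map compact, so non-membership cannot be read off from a single representation, and Proposition~\ref{ehaag} supplies only a necessary symbol condition which is now vacuous because $\B\ehaag\A^o=\K(\B)\ehaag\K(\A^o)$. To get around this I would pass to commutative corners. If both $\A=\K(\A)$ and $\B=\K(\B)$ were infinite dimensional, each would contain infinitely many pairwise orthogonal nonzero projections and hence a C*-subalgebra isomorphic to $c_0$, say $\A_0\cong c_0\subseteq\A$ and $\B_0\cong c_0\subseteq\B$. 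Proposition~\ref{co} (with $X=Y=\bb{N}$) provides $\psi\in M(c_0,c_0)\setminus(c_0\haag c_0)$, while Theorem~\ref{otherc}, applicable since $c_0\cong\bigoplus^{c_0}M_1$ has uniformly bounded matrix sizes, gives $M_c(c_0,c_0)=M_{cc}(c_0,c_0)$, which by Theorem~\ref{charcn} is exactly $c_0\haag c_0$; hence $\psi\notin M_c(c_0,c_0)$. Viewing $\psi$ inside $\A_0\otimes\B_0\subseteq\A\otimes\B$, the fact that representations of $\A,\B$ restrict to representations of $\A_0,\B_0$ shows $\psi\in M(\A,\B)$, and any faithful representations witnessing $\psi\in M_c(\A,\B)$ would restrict to faithful representations witnessing $\psi\in M_c(c_0,c_0)$, a contradiction. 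Thus $\psi\in M(\A,\B)\setminus M_c(\A,\B)$, contradicting (ii) and completing the proof.
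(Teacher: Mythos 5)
Your proof is correct, and its skeleton matches the paper's: (iii)$\Rightarrow$(ii) is trivial, (i)$\Rightarrow$(iii) goes through Theorem~\ref{charcn}, and (ii)$\Rightarrow$(i) combines elementary tensors with Proposition~\ref{ehaag} to force $\K(\A)=\A$, $\K(\B)=\B$, and a $c_0\otimes c_0$ counterexample from Proposition~\ref{co} to force finite-dimensionality. There are, however, some genuine differences worth recording. For (i)$\Rightarrow$(iii) you argue via the symbol and the identity $\E\ehaag\F=\E\odot\F=\E\haag\F$ when $\F$ is finite dimensional (which is valid: slice against a dual basis of $\F$ and use~(\ref{eq:spronk})), whereas the paper simply writes $\phi$ as a finite sum of elementary tensors $a\otimes b$ with $a$ of rank one and $b$ compact; these are essentially the same observation. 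The substantive divergence is in showing that the function $\psi\in M(c_0,c_0)\setminus(c_0\haag c_0)$ is not a compact multiplier: the paper cites Hladnik's external characterisation of compact Schur multipliers, while you derive the same fact internally from Theorem~\ref{otherc} (so that $M_c(c_0,c_0)=M_{cc}(c_0,c_0)$) together with Theorem~\ref{charcn} (so that $M_{cc}(c_0,c_0)=c_0\haag c_0$ as sets of functions, the flip $\phi\mapsto u_\phi$ being harmless for commutative algebras). This buys a self-contained argument at the price of invoking the heavier Theorem~\ref{otherc}; it also lets you finish by the cleaner remark that faithful representations of $\A,\B$ restrict to faithful representations of the $c_0$-copies, so that a witness for $\psi\in M_c(\A,\B)$ would witness $\psi\in M_c(c_0,c_0)$ — the paper instead argues that such restrictions contain the identity representation. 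Two cosmetic caveats: Theorem~\ref{symb} gives an isometric embedding of $M(\A,\B)$ into $\B\ehaag\A^o$, not an identification (you never use surjectivity, so no harm done); and the existence of a copy of $c_0$ inside an infinite-dimensional $\K(\A)$ rests on the structure of the ideal of compact elements as a $c_0$-sum of elementary C*-algebras — the paper asserts this without proof too, so you are at the same level of rigour.
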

\begin{proof} (i)$\Rightarrow$(iii) Suppose that $\A$ is finite
dimensional and $\K(\B) = \B$, and that $\A\subseteq \B(H_1)$ and
$\B\subseteq\B(H_2)$ for some Hilbert spaces $H_1$ and $H_2$ where
$H_1$ is finite dimensional. Fix $\nph\in M(\A,\B)$. Then $\nph$
is the sum of finitely many elements of the form $a\otimes b$
where $a$ has rank one and $b\in \K(H_2)$; such elements are
completely compact multipliers by Theorem~\ref{charcn}.\medskip

(iii)$\Rightarrow$(ii) is trivial.\medskip

(ii)$\Rightarrow$(i) Assume that both $\A$ and $\B$ are infinite
dimensional and are identified with their image under the reduced
atomic representation. If either $\K(\A)$ or $\K(\B)$ is finite
dimensional then there exists an elementary tensor $a\otimes b \in
(\A\odot\B) \setminus (\K(\A)\odot\K(\B))$. By
Proposition~\ref{ehaag}, $a\otimes b \not\in M_c(\A,\B)$. We can
therefore assume that both $\K(\A)$ and $\K(\B)$ are infinite
dimensional. Then, up to a $*$-isomorphism, $c_0$ is contained in
both $\K(\A)$ and $\K(\B)$. By Proposition~\ref{co}, there exists
$\nph\in M(c_0,c_0)\setminus (c_0\haag c_0)$. Then $\nph\in
M(\A,\B)$ and $\Phi_{\id(\nph)}$ is not compact by 
Hladnik's result~\cite{hladnik}.
Since the restrictions to
$c_0$ of any faithful representations of $\A$, $\B$ contain
representations unitarily equivalent to the identity
representations, we see that $\nph$ is not a compact multiplier.

Thus at least one of the C*-algebras $\A$ and $\B$ is finite
dimensional; assume without loss of generality that this is $\A$.
Suppose that $\B \neq \K(\B)$ and fix an element an $b\in
\B\setminus \K(\B)$. Let $a\in\A$ be a non-zero element. By
Proposition~\ref{ehaag}, the elementary tensor $a\otimes b$ is not
a compact multiplier.
\end{proof}

\end{document}